\newcommand{\lleft}{\left}
\newcommand{\rright}{\right}
\newtheorem{thm}{Theorem}
\newtheorem{lemma}{Lemma}
\newtheorem{cor}{Corollary}
\theoremstyle{definition}
\newtheorem{remark}{Remark}
\newcommand{\RR}{\mathbb{R}}
\newcommand{\NN}{\mathbb{N}}
\newcommand{\ZZ}{\mathbb{Z}}
\newcommand{\dirac}{I}
\newcommand{\ii}{{\mathrm i}}
\newcommand{\ee}{{\mathrm e}}
\newcommand{\DD}{{\mathrm{d}}}
\newcommand{\norm}[1]{\|#1\|} % norm
\newcommand{\knorm}[1]{\ab{#1}_{\mathrm{K}}}% Kolmogorov norm
\newcommand{\localnorm}[1]{\norm{#1}_\infty} % local norm
\newcommand{\ab}[1]{\vert#1\vert} % absolut value
\newcommand{\Ab}[1]{\Big\vert#1\Big\vert} % large absol. value
\newcommand{\AB}[1]{\bigg\vert#1\bigg\vert}
\newcommand{\exponent}[1]{\exp\{#1\}}
\newcommand{\Exponent}[1]{\exp\Bigl\{#1\Bigr\}}
\newcommand{\eit}{\ee^{\ii t}}
\newcommand{\deit}{\ee^{d\ii t}}
\newcommand{\Prob}{\mathrm{P}}
\newcommand{\w}{\widehat}
\newcolumntype{d}[1]{D{.}{.}{#1}}
\begin{document}

\begin{frontmatter}
\pretitle{Research Article}

\title{Asymptotics for the sum of three state Markov dependent random variables}

\author{\inits{G.}\fnms{Gabija}~\snm{Liaudanskait\.{e}}\thanksref{cor1}\ead[label=e2]{gabija.liaudanskaite@mif.stud.vu.lt}}
\author{\inits{V.}\fnms{Vydas}~\snm{\v{C}ekanavi\v{c}ius}\ead[label=e1]{vydas.cekanavicius@mif.vu.lt}}
\thankstext[type=corresp,id=cor1]{Corresponding author.}
\address{Faculty of Mathematics and Informatics, \institution{Vilnius University},\\ Naugardukas str. 24, LT-03225, Vilnius, \cny{Lithuania}}

%\dedicated{}

%\markboth{Authors}{Title}
\markboth{G. Liaudanskait\.{e}, V. \v{C}ekanavi\v{c}ius}{Asymptotics for the sum of three state Markov dependent random variables}

\begin{abstract}
The insurance model when the amount of claims depends on the state of the
insured person (healthy, ill, or dead) and claims are connected in a Markov
chain is investigated. The signed compound Poisson approximation is applied to
the aggregate claims distribution after $n\in\NN$ periods. The accuracy of
order $O(n^{-1})$ and $O(n^{-1/2})$ is obtained for the local and uniform
norms, respectively. In a particular case, the accuracy of estimates in total variation and
non-uniform estimates are shown to be at least
of order $O(n^{-1})$.
The characteristic function method is used. The results can be applied to
estimate the probable loss of an insurer to optimize an insurance
premium.
\end{abstract}

\begin{keywords}
\kwd{Signed compound Poisson approximation}
\kwd{insurance model}
\kwd{Markov chain}
\kwd{Kolmogorov norm}
\kwd{local norm}
\kwd{total variation norm}
\kwd{non-uniform estimate}
\end{keywords}
\begin{keywords}[MSC2010]%
\kwd{60J10}
\end{keywords}

\received{\sday{10} \smonth{8} \syear{2018}}% Updated by
%VTEXPTS2LaTeX.exe, 07.11.2018 12:20
\revised{\sday{11} \smonth{10} \syear{2018}}% Updated by
%VTEXPTS2LaTeX.exe, 07.11.2018 12:20
\accepted{\sday{27} \smonth{10} \syear{2018}}% Updated by
%VTEXPTS2LaTeX.exe, 07.11.2018 12:20
\publishedonline{\sday{19} \smonth{11} \syear{2018}}
\end{frontmatter}

%%%%%%%%%%%%%%
%%%%%%%%%%%%%%
%%%%%%%%%%%%%%
%% INTRODUCTION
%%%%%%%%%%%%%%
%%%%%%%%%%%%%%

\section{Introduction}

This paper is motivated by the insurance model\index{insurance models} in which the insured is
described by a random variable (rv) with three states (healthy, ill, dead),
and rvs are connected in a Markov chain.\index{Markov chain} We assume that the insurer
pays one
unit of money in the case of illness and continuously pays $d\in\NN$
units in
the case of death. We are interested in aggregate losses for the insurer
after $n\in\NN$ time periods. More precisely, let
$\xi_{0}, \xi_{1},\dots, \xi_{n},\dots$ be a non-stationary
three-state $\{a_{1},a_{2},a_{3}\}$ Markov chain.\index{Markov chain} State $a_{1}$
corresponds to being healthy, state $a_{2}$ corresponds to being ill,
and state $a_{3}$ is reached in the case of death. The insurer pays
nothing for healthy policy holders, one unit of money for the ill
individuals, and constantly pays $d$ units of money ($d\in\NN$) in the
case of death. We denote the distribution of $S_n=f(\xi_1)+\cdots+f(\xi
_n)$ $(n\in\NN)$ by $F_{n}$, that is, $\Prob(S_{n}=m)=F_{n}\{m\}$ for
$m\in\ZZ$. Here $f(a_{1}) = 0, f(a_{2})=1, f(a_{3})=d, d\in\NN$. We
will analyze a little simplified model by assuming that the probability
of a healthy person to die is equal to zero (i.e. we exclude the cases
of sudden death). Even though this assumption diminishes model's
universality, it is quite reasonable, because usually a person is ill
at least for one time period and dies only afterwards.

\def\arraystretch{1}
The matrix of transition probabilities\index{transition probabilities} $P$ is defined in the following way
\[
P= \lleft( %
\begin{array}{rrr}
1-\gamma& \gamma& 0 \\
1-\alpha-\beta& \beta& \alpha\\
0& 0& 1\\
\end{array} %
 \rright), \quad\alpha,\beta,\gamma
\in(0,1).
\]

It is assumed that at the beginning the insured person is healthy. Hence,
the initial distribution is given by
\[
\Prob(\xi_{0}=a_{1})=\pi_{1}=1, \qquad
\Prob(\xi_{0}=a_{2})=\pi_{2}=0, \qquad
\Prob(\xi_{0}=a_{3})=\pi_{3}=0.
\]
Observe, that our Markov chain\index{Markov chain} contains one absorbing state\index{absorbing state} (death).

In this paper, we consider triangular arrays\index{triangular arrays} of rvs (the scheme of series),
i.e. all transition probabilities $\alpha, \beta, \gamma$\index{transition probabilities} can depend on
$n\in\NN$. Arguably in insurance models\index{insurance models} the triangular arrays\index{triangular arrays} are more
natural than the more frequently studied less general scheme of sequences,
when it is assumed that the probability to become ill or to die does not
change as time passes.

All results are obtained under the condition
\begin{equation}\label{condition}
0<\beta\leqslant0.15, \qquad
0<\gamma\leqslant0.05, \qquad
\alpha\leqslant C_0<1, \qquad
\alpha+\beta< 1.
\end{equation}

Here $C_0\in(0,1)$ is any maximum possible value of $\alpha(n), n\in\NN$
(strictly less than~1), i.e. the maximum probability of an ill
individual to
die for all time periods $n\in\NN$.
The condition (\ref{condition}) is not
very restrictive, because $\beta\leqslant0.15$ means that the
probability to
remain ill during the next time period does not exceed 15\%, and
$\gamma\leqslant0.05$ means that the probability of a healthy person to
become ill does not exceed 5\%, that is, only chronic and epidemic illnesses
are excluded.

We denote by $C$ all positive absolute constants, and we denote by
$\theta$
any complex number satisfying $|\theta| \leqslant1$. The values of $C$ and
$\theta$ can vary from line to line or even within the same line. Sometimes,
as in (\ref{condition}), we supply constants with indices. Let $I_k$ denote
the distribution concentrated at an integer $k\in\ZZ$, and set $I=I_0$. Let
$M_{\ZZ}$ be a set of finite signed measures concentrated on $\ZZ$. The
Fourier transform and analogue of distribution function for $M\in M_{\ZZ
}$ is
denoted by $\w{M}(t)$ $(t\in\RR)$ and $M(x):=\sum_{j=-\infty}^xM\{j\}$,
respectively. Similarly, $F_n(x):=F_n\{(-\infty, x]\}$. For $y\in\RR$ and
$j\in\NN=\{1,2,3,\dots\}$, we set
\[
\binom{y}{j}:=\frac{1}{j!}y(y-1)\dots(y-j+1),\quad
\binom{y}{0}:=1.
\]
\par

If $N, M \in M_{\ZZ}$, then products and powers of $N$ and $M$ are
understood in the convolution sense, that is, for a set $A\subseteq\ZZ$,
\[
NM\{A\}=\sum_{k=-\infty}^\infty N\{A-k\}M\{k\}, \quad M^0=\dirac.
\]
The exponential of $M$ is denoted by
\[
\ee^M=\exponent{M}:=\sum_{k=0}^\infty
\frac{1}{k!}M^k.
\]
We define the local norm, the uniform (Kolmogorov) norm, and the
total-variation norm of~$M$ respectively by
\[
\localnorm{M}:=\sup_{k\in\ZZ}\ab{M\{k\}},\quad\  \knorm{M}:=\sup
_{x\in\RR}\ab{M\{(-\infty,x]\}}, \quad\  \norm{M}:=\sum
_{j=-\infty}^{\infty}\ab{M\{j\}}.
\]

In the proofs, we apply the following well-known relations:
\begin{align*}
\w{MN}(t)=\w{M}(t)\w{N}(t),&\qquad\norm{MN}\leqslant\norm{M}\norm {N},\qquad
\knorm{MN}\leqslant\norm{M}\knorm{N},
\\
\norm{MN}_{\infty}\leqslant\norm{M}\norm{N}_\infty, &\qquad\ab{\w
{M}(t)}\leqslant\norm{M}, \qquad\w{I}_a(t)=\ee^{\ii ta}, \qquad
\w{I}(t)=1.
\end{align*}
%
%%%%%%%%%%%
%%%%%%%%%%%
%%%%%%%%%%%
% KNOWN RESULTS
%%%%%%%%%%%

\section{Known results}

The compound Poisson\index{compound Poisson approximation} approximation is frequently used to approximate aggregate
losses in risk models (see, for example, \citep{DD92, Ge84, H85, Pi04, Ro07,
ZLL14}); however, in those models it is usually assumed that rvs are
independent of time period $n\in\NN$. The compound Poisson\index{compound Poisson approximation} approximation
to sums
of Markov dependent rvs was investigated in \cite{E99}. Numerous papers were
devoted to Markov Binomial distribution,\index{Markov binomial distribution} see \cite{BL06,CR09,CV10,Gan82,Aki93,XZ09,YY10}, and the references therein. It seems, however, that the case of Markov chain\index{Markov chain}
containing absorbing state\index{absorbing state} was not considered so far.
Our research is closely related to the paper \cite{SC16}, in which a
non-stationary three-state symmetric Markov chain $\xi_0, \xi_1, \dots
\xi_n,
\dots$\index{Markov chain} was investigated with the matrix of transition probabilities\index{transition probabilities}
\begin{eqnarray*}
\begin{pmatrix}
a & 1-2a & a \\
b & 1-2b & b \\
a & 1-2a & a
\end{pmatrix} %
,\quad a,b\in(0, 0.5).
\end{eqnarray*}

Let $\tilde S_n=\tilde f(\xi_1)+\cdots+\tilde f(\xi_n)$ $(n\in\NN)$,
$\tilde f(a_1)=-1$, $\tilde f(a_2)=0$, $\tilde f(a_3)=1$ and let the
initial distribution be $P(\xi_0=a_1)=\pi_{1}$, $P(\xi_0=a_2)=\pi_{2}$,
and $P(\xi_0=a_3)=\pi_{3}$. Denote the distribution of $\tilde S_n$ by
$\tilde F_n$. $\tilde G$ defines the measure with the Fourier transform:
\begin{align}\label{sliogere}
\tilde g(t)={}& \biggl(\pi_1+\frac{1-2a\cos t}{1-2a}
\pi_2+\pi_3 \biggr)\frac{1-2(a-b)}{1-2(a-b)-2a(\cos t-1)}\nonumber\\*
&\times\Exponent{\frac{2nb(1-2a)(\cos t-1)}{(1-2a+2b)(1-2a\cos t)}}.
\end{align}
As shown in \cite{SC16}, if $a,b\leqslant1/30$, then
\begin{equation}
\norm{\tilde F_n-\tilde G}\leqslant C \biggl( \min \biggl\{
\frac{1}{n}, b \biggr\}+0.2^n\ab{a-b} \biggr). \label{cit1}
\end{equation}

The main part of the approximation $\tilde G$ is a compound Poisson distribution\index{compound Poisson distribution}
with a compounding symmetrized geometric distribution. The accuracy of
approximation is at least $O(n^{-1})$. However, due to the symmetry of
distribution and possible negative values, it is difficult to find a
compatible insurance model.\index{insurance models}

%%%%%%%%%%%%%%%%%%%%%%%%%%%%%
%{\chr"C5}{\chr"BD}YM{\chr"C4}{\chr"96}JIMAI

\section{Measures used for approximation}

For convenience we present all Fourier transforms\index{Fourier transforms} of measures used for
construction of approximations in a separate table. Note that all measures
are denoted by the same capital letters as their Fourier transforms\index{Fourier transforms} (for
example, $\w{H}(t)$ is the Fourier transform of $H$).

The measures can be easily found from their Fourier transforms\index{Fourier transforms}
using the formula
\[
M\{k\}=\frac{1}{2\pi}\int_{-\pi}^{\pi}
\ee^{-k\ii t} \w{M}(t) \DD t \quad\text{for all }  k\in\ZZ.
\]
For example,
\[
\w{H}(t)=\frac{(1-\beta)\eit}{1-\beta\eit}.
\]

Since $\w{I}_a(t)=\ee^{\ii ta}$, for all $ k\in\ZZ$ we have
\begin{align*}
H\{k\}&=\frac{1}{2\pi}\int_{-\pi}^{\pi}
\ee^{-k\ii t} \frac{(1-\beta
)\eit}{1-\beta\eit} \DD t =\frac{1-\beta}{2\pi}\int
_{-\pi}^{\pi} \ee^{-\ii kt}\ee^{\ii t} \sum
_{j=0}^{\infty}(\beta\eit)^j \DD t
\\
&=(1-\beta) \beta^{k-1}\sum_{j=0}^{\infty}
\beta^{j-k+1}\frac{1}{2\pi
}\int_{-\pi}^{\pi}
\ee^{-kit}\ee^{(j+1)it} \DD t
\\
&=(1-\beta) \beta^{k-1}\sum_{j=0}^{\infty}
\beta^{j-k+1}I_{j+1}\{k\}
\\
&=(1-\beta)\sum_{j=0}^{\infty}
\beta^{j}I_{j+1}\{k\}.
\end{align*}

The other measures can be calculated analogously using their Fourier
transforms\index{Fourier transforms} presented \xch{in Table \ref{tab1}}{in the table below}.

\begin{table}[t!]
\def\arraystretch{3}
\caption{Fourier transforms of used measures.}\label{tab1}
\begin{tabular*}{\textwidth}{l@{\qquad\qquad\qquad }l@{}}
\hline
$\displaystyle \w{H}(t)=\frac{(1-\beta)\eit}{1-\beta\eit} $ & $\displaystyle \w{A}_1(t)=\frac{1-\beta}{1+\gamma-\beta}(\w{\Psi}(t)-1) $\\%\hline
$\displaystyle\w{\Psi}(t)=\frac{(1-\alpha-\beta)\eit}{1-\beta\eit}$ & $\displaystyle \w{A}_2(t)=-\frac{\beta(1-\beta)}{(1+\gamma-\beta)^2}(\w{H}(t)-1)(\w{\Psi}(t)-1)$\\%\hline
$\displaystyle \w{H}(t)-1=\frac{\eit-1}{1-\beta\eit}$&$\displaystyle \w{A}_3(t)=\frac{\beta^2(1-\beta)(\w{H}(t)-1)^2(\w{\Psi}(t)-1)}{(1+\gamma-\beta)^3}$\\%\hline
$\displaystyle \w{\Psi}(t)-1=\frac{(1-\alpha)\eit-1}{1-\beta\eit}$&$\displaystyle \w{A}_4(t)=-\frac{(1-\beta)^3(\w{\Psi}(t)-1)^2}{(1+\gamma-\beta)^3(1-\beta\eit)}$\\%\hline
$\displaystyle \w{U}(t)=(1-\alpha)\eit-1$ & $\displaystyle \w{A}_5(t)=\frac{3\beta(1-\beta)^3(\w{\Psi}(t)-1)^2(\w{H}(t)-1)}{(1+\gamma-\beta)^4(1-\beta\eit)}$\\%\hline
$\w{\Delta}(t)=1+\w{A}_1(t)\gamma$ &$\displaystyle \w{A}_6(t)=\frac{2(1-\beta)^5(\w{\Psi}(t)-1)^3}{(1+\gamma-\beta)^5(1-\beta\eit)^2}$\\%\hline
\multicolumn{2}{l}{$\displaystyle \w{\Delta}_1(t)=1+\w{A}_1(t)\gamma+(\w{A}_2(t)+\w{A}_4(t))\gamma^2$}\\%\hline
\multicolumn{2}{l}{$\displaystyle \w{A}(t)=1+\w{A}_1(t)\gamma+\w{A}_2(t)\gamma^2+\w{A}_3(t)\gamma^3+\w{A}_4(t)\gamma^2+\w{A}_5(t)\gamma^3+\w{A}_6(t)\gamma^3$}\\%\hline
\multicolumn{2}{l}{$\displaystyle \w{V}(t)= \frac{(\ee^{(d+1)\ii t}-1)(\beta-\gamma(1-\alpha))-(\deit-1)\w{\Delta}(t)}{(\w{A}(t)-\deit)(2\w{\Delta}(t)-1+\gamma-\beta\eit)}
+\frac{(\eit-1)[\gamma\w{\Delta}(t)-\beta+\gamma(1-\alpha)]}{(\w{A}(t)-\deit)(2\w{\Delta}(t)-1+\gamma-\beta\eit)}$}\\%\hline
\multicolumn{2}{l}{$\displaystyle \w{V}_1(t)= \frac{(\ee^{(d+1)\ii t}-1)(\beta-\gamma(1-\alpha))-(\deit-1)\w{\Delta}(t)}{(\w{\Delta}_1(t)-\deit)(2\w{\Delta}(t)-1+\gamma-\beta\eit)}
+\frac{(\eit-1)[\gamma\w{\Delta}(t)-\beta+\gamma(1-\alpha)]}{(\w{\Delta}_1(t)-\deit)(2\w{\Delta}(t)-1+\gamma-\beta\eit)}$}\\%\hline
\multicolumn{2}{l}{$\displaystyle \w{V}_2(t)= \frac{(\ee^{(d+1)\ii t}-1)(\beta-\gamma(1-\alpha))-(\deit-1)\w{\Delta}(t)}{(\w{\Delta}_1(t)-\deit)(2\w{\Delta}_1(t)-1+\gamma-\beta\eit)}
+\frac{(\eit-1)[\gamma\w{\Delta}(t)-\beta+\gamma(1-\alpha)]}{(\w{\Delta}_1(t)-\deit)(2\w{\Delta}_1(t)-1+\gamma-\beta\eit)}$}\\%\hline
\multicolumn{2}{l}{$ \w{G}(t)=\Exponent{\w{A}(t)-1-\frac{1}{2}\Big(\w{A}_1^2(t)\gamma^2+2\w{A}_1(t)(\w{A}_2(t)+\w{A}_4(t))\gamma^3\Big)+\frac{1}{3}\w{A}_1^3(t)\gamma^3}$}\\%\hline
\multicolumn{2}{l}{$\displaystyle \w{G}_1(t)=\exp\bigg\{\w{A}_1(t)\gamma+\Big(\w{A}_2(t)+\w{A}_4(t)-\frac{1}{2}\w{A}_1^2(t)\Big)\gamma^2\bigg\}$}\\%\hline
\multicolumn{2}{l}{$\displaystyle \w{E}(t)=\frac{\alpha\gamma\ee^{(n+1)d\ii t}}{(e^{(d-1)\ii t}-\beta)(\deit-(1-\gamma))-\gamma(1-\alpha-\beta)}$}\\[3pt]
\hline
\end{tabular*}
\end{table}

%%%%%%%%%%%%%%%%%%%%%%%%%%%%%%%%%%%%%%%%%%
%REZULTATAI
\section{Results}
We analyze the scheme of series, when transition probabilities\index{transition probabilities} may differ
from one time period to another time period, that is, transition probabilities\index{transition probabilities}
depend on
$n\in\NN$: $\alpha=\alpha(n), \beta=\beta(n), \gamma=\gamma(n)$.
First we formulate a general approximation result for $F_n$, where
possible smallness of $\alpha$ and $\gamma$ is taken into account.

\begin{thm}\label{T1}
Let condition (\ref{condition}) hold. Then, for all $n=1,2, \dots$,
\begin{align}
\knorm{F_n-(G^nV+E)}&\leqslant C(d+1) \biggl(
\ee^{-Cn\gamma\alpha}\sqrt {\frac{\gamma}{n}}+(\beta+4\gamma)^n
\biggr),\label{Rez1}
\\
\localnorm{F_n-(G^nV+E)}&\leqslant C(d+1)
\biggl(\frac{\ee^{-Cn\gamma
\alpha}}{n}+(\beta+4\gamma)^n \biggr)
\nonumber
.
\end{align}
\end{thm}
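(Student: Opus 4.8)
The plan is to use the characteristic function method: I would produce a closed expression for $\w{F}_n(t)$, recognise its already-absorbed part as exactly $\w{E}(t)$, approximate the remaining transient part by $\w{G}(t)^n\w{V}(t)$ through a short chain of elementary approximations, and finally pass from this pointwise Fourier estimate to the local and Kolmogorov bounds via the standard inversion inequalities for lattice signed measures.

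First I would compute $\w{F}_n(t)$ by the transfer-matrix method. With $e_1=(1,0,0)^{\top}$, $\mathbf 1=(1,1,1)^{\top}$, and $\tilde P(t)$ the matrix obtained from $P$ by multiplying its $k$-th column by $\ee^{\ii t f(a_k)}$, one has $\w{F}_n(t)=e_1^{\top}\tilde P(t)^n\mathbf 1$. Since the third state is absorbing, $\tilde P(t)$ is block-triangular with a $2\times 2$ transient block $Q(t)$ (first row $(1-\gamma,\ \gamma\eit)$, second row $(1-\alpha-\beta,\ \beta\eit)$) and scalar block $\deit$. Expanding $\tilde P(t)^n$ and summing the geometric series coming from the off-diagonal block yields the algebraic identity $\w{F}_n(t)=\w{E}(t)+e_1^{\top}Q(t)^n v(t)$ with $v(t)=\mathbf 1_2-\alpha\bigl(I-\ee^{-d\ii t}Q(t)\bigr)^{-1}e_2$; the fact that the part of the sum proportional to $\ee^{nd\ii t}$ equals $\w{E}(t)$ is a two-line computation with $2\times 2$ adjugates, valid because $\deit$ is not an eigenvalue of $Q(t)$ under (\ref{condition}). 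Two consequences will be used throughout: $v(0)=0$ (so $e_1^{\top}Q(0)^n v(0)=0$ and $\w{E}(0)=\w{F}_n(0)=1$), and $v(t)$ is an explicit smooth function of $\eit,\deit$ whose $t$-derivatives carry a factor $C(d+1)$ because of the $\deit$.

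The core of the argument is the spectral analysis of $Q(t)$. I would write $Q(t)^n=\lambda_1(t)^n P_1(t)+\lambda_2(t)^n P_2(t)$, the eigenvalues being the roots of $\lambda^2-(1-\gamma+\beta\eit)\lambda+\eit(\beta-\gamma(1-\alpha))=0$ and $P_i(t)$ the (explicit, rational in $Q(t)$ and $\lambda_i(t)$) spectral projectors. Using (\ref{condition}) one establishes, uniformly in $t\in[-\pi,\pi]$, a bound of the form $\ab{\lambda_1(t)}^n\le C\exp\{-Cn\gamma\alpha-Cn\gamma(1-\cos t)\}$ for the dominant eigenvalue together with $\ab{\lambda_2(t)}\le\beta+4\gamma$, plus smoothness estimates on $P_i$ and $v$. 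Because $v(0)=0$, the subdominant term $\lambda_2(t)^n e_1^{\top}P_2(t)v(t)$ vanishes at $t=0$ and is $O((\beta+4\gamma)^n\ab{1-\eit})$ near it, hence contributes only $O((\beta+4\gamma)^n)$ after inversion; the remaining task is to match $\lambda_1(t)^n e_1^{\top}P_1(t)v(t)$ with $\w{G}(t)^n\w{V}(t)$. This is exactly what the auxiliary measures of Table~\ref{tab1} are tailored for: $\w{A}(t)$ is the Taylor polynomial of $\lambda_1(t)$ in powers of $\gamma$ up to $\gamma^3$ (with coefficients $\w{A}_1,\dots,\w{A}_6$), $2\w{\Delta}(t)-1+\gamma-\beta\eit$ is the matching approximation to $\lambda_1(t)-\lambda_2(t)$, $\w{V}(t)$ is $e_1^{\top}P_1(t)v(t)$ after these substitutions (which forces $\w{V}(0)=0$), and $\w{G}(t)=\exp\{(\w{A}(t)-1)-\tfrac12(\w{A}_1^2\gamma^2+2\w{A}_1(\w{A}_2+\w{A}_4)\gamma^3)+\tfrac13\w{A}_1^3\gamma^3\}$ has as exponent the Taylor polynomial of $\log\lambda_1(t)$ of the same order. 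One then shows that $\ab{\w{A}(t)-\lambda_1(t)}$, $\ab{\log\w{G}(t)-\log\lambda_1(t)}$ and $\ab{\w{V}(t)-e_1^{\top}P_1(t)v(t)}$ are small — of order $\gamma^4$ times factors vanishing suitably at $t=0$ — which, combined with $\ab{\lambda_1(t)}^n\le\ee^{-Cn\gamma\alpha-Cn\gamma(1-\cos t)}$, upgrades to a pointwise estimate of the form $\ab{\w{F}_n(t)-\w{G}(t)^n\w{V}(t)-\w{E}(t)}\le C(d+1)\bigl(K_n(t)\ee^{-Cn\gamma\alpha}+\ab{1-\eit}(\beta+4\gamma)^n\bigr)$, where $K_n(t)$ is a function (essentially $n\gamma^2(1-\cos t)^2\ee^{-Cn\gamma(1-\cos t)}$, the precise powers being what the chain of approximations delivers) designed so that the integrals in the next step come out right.

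Finally, since $\w{V}(0)=0$ and $\w{E}(0)=1=\w{F}_n(0)$, the signed measure $M=F_n-(G^nV+E)$ has zero total mass, $\w{M}(0)=0$; hence both the elementary bound $\localnorm{M}\le(2\pi)^{-1}\int_{-\pi}^{\pi}\ab{\w{M}(t)}\,\DD t$ and the Tsaregradskii-type inequality $\knorm{M}\le(2\pi)^{-1}\int_{-\pi}^{\pi}\ab{\w{M}(t)/(1-\eit)}\,\DD t$ are available. Inserting the pointwise estimate and evaluating the resulting Gaussian-type integrals $\int_{-\pi}^{\pi}(1-\cos t)^{j}\ee^{-cn\gamma t^2}\,\DD t\asymp\min\{1,(n\gamma)^{-j-1/2}\}$ (together with $\int_{-\pi}^{\pi}\ab{1-\eit}(\beta+4\gamma)^n\,\DD t\le C(\beta+4\gamma)^n$ and its divided analogue) gives the claimed $O(n^{-1})$ local and $O(\sqrt{\gamma/n}\,)$ Kolmogorov rates, each multiplied by $\ee^{-Cn\gamma\alpha}$ and $C(d+1)$ — the additional $\ab{1-\eit}^{-1}\sim\ab{t}^{-1}$ in the Kolmogorov estimate being responsible for the exchange of one power of $n^{-1}$ for $\sqrt{\gamma/n}$. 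The main obstacle, and the bulk of the work, is the middle step: carrying the eigenvalue and eigenprojector expansions of $Q(t)$ far enough while keeping every remainder uniform over $t\in[-\pi,\pi]$ under the quantitative restrictions (\ref{condition}) — in particular verifying $\ab{\lambda_2(t)}\le\beta+4\gamma$ and that the measures of Table~\ref{tab1} genuinely assemble into approximations with exactly the stated error, all while tracking the $\deit$-dependent oscillatory factors that produce the constant $C(d+1)$.
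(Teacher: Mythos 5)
Your proposal is correct and is essentially the same proof as the paper's. The paper obtains the decomposition $\w F_n=\w\varLambda_1^n\w W_1+\w\varLambda_2^n\w W_2+\w\varLambda_3^n\w W_3$ from Perron's formula for the full $3\times3$ matrix $\tilde P(t)$, identifies $\w\varLambda_3^n\w W_3=\w E$, bounds $\ab{\w\varLambda_1}$ and $\ab{\w\varLambda_2}\le\beta+4\gamma$ exactly as you state, approximates $\w\varLambda_1$, $\log\w\varLambda_1$, and $\w W_1$ by $\w A$, $\log\w G$, and $\w V$ respectively through the $\gamma$-expansion of $\sqrt{\w D(t)}$, and then invokes the same two inversion inequalities; your $2\times2$ block-triangular reduction with $Q(t)^n=\lambda_1^nP_1+\lambda_2^nP_2$ and $v(t)=\mathbf 1_2-\alpha(I-\ee^{-d\ii t}Q(t))^{-1}e_2$ is just a cleaner repackaging of that eigenvalue--eigenvector computation (one can check directly that the absorbed geometric-sum part equals $\w E(t)$ and that $\w W_i=e_1^{\top}P_i v$), so the arguments agree step for step.
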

\begin{remark}
Observe that, since $\beta+4\gamma\leqslant0.35$, the second term in
(\ref{Rez1}) tends to zero exponentially.
\end{remark}

Unlike (\ref{sliogere}), there are two components in our approximation: the
first one contains $n$-fold convolution of a signed compound Poisson measure,\index{signed compound Poisson measure}
the second one takes into account the probability of death (the absorbing
state\index{absorbing state}). The measures of approximation are chosen in a way ensuring that the
accuracy of approximation is at least as good as in the Berry--Esseen
theorem.

\begin{cor} Let condition (\ref{condition}) hold. Then, for all
$n=1,2,\dots$,
\[
\ab{F_n-(G^nV+E)}_K\leqslant
\frac{C(d+1)}{\sqrt{n}}.
\]
\end{cor}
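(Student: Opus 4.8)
The plan is to obtain the Corollary directly from the Kolmogorov-norm estimate in Theorem~\ref{T1} by bounding each of the two summands on the right-hand side of (\ref{Rez1}) by a constant multiple of $1/\sqrt{n}$. For the first summand I would use $\ee^{-Cn\gamma\alpha}\leqslant1$ together with $\gamma\leqslant0.05$ from (\ref{condition}), so that $\sqrt{\gamma}\leqslant\sqrt{0.05}<1$ and hence
\[
\ee^{-Cn\gamma\alpha}\sqrt{\frac{\gamma}{n}}\leqslant\frac{\sqrt{\gamma}}{\sqrt{n}}\leqslant\frac{1}{\sqrt{n}}.
\]

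For the second summand, (\ref{condition}) gives $\beta+4\gamma\leqslant0.15+4\cdot0.05=0.35$, whence $(\beta+4\gamma)^n\leqslant0.35^n$. Writing $a_n=\sqrt{n}\,0.35^n$ and comparing consecutive terms, $a_{n+1}/a_n=0.35\sqrt{1+1/n}<0.35\sqrt{2}<1$, so the sequence $(a_n)_{n\geqslant1}$ is decreasing and $a_n\leqslant a_1=0.35$ for all $n\geqslant1$; therefore $(\beta+4\gamma)^n\leqslant0.35/\sqrt{n}\leqslant1/\sqrt{n}$. Adding the two estimates gives $\knorm{F_n-(G^nV+E)}\leqslant C(d+1)\cdot\frac{2}{\sqrt{n}}$, and absorbing the factor $2$ into $C$ yields the claim.

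The argument is entirely routine: the Corollary is just a numerical simplification of Theorem~\ref{T1} exploiting the bounds on $\beta$ and $\gamma$ in (\ref{condition}), and the only substantive point — that $(\beta+4\gamma)^n$, a geometric sequence with ratio at most $0.35$, is dominated by $C/\sqrt{n}$ — is immediate because geometric decay beats any fixed power. I do not anticipate any real obstacle.
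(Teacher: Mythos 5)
Your proof is correct and is essentially the argument the paper has in mind (the paper states this corollary without a separate proof, treating it as an immediate numerical consequence of Theorem~\ref{T1}): drop the exponential factor, bound $\sqrt{\gamma}\leqslant1$, and note that the geometric term $(\beta+4\gamma)^n\leqslant0.35^n$ is dominated by $C/\sqrt{n}$.
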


This accuracy is reached, when $\alpha\gamma=O(n^{-1})$. If
$\alpha,\gamma\geqslant C_1>0$, the accuracy of approximation is
exponentially sharp. That prompts a question: Is it possible to
simplify the
structure of approximation by imposing more restrictive assumptions? The
answer is positive for $\alpha$ uniformly separated from zero for all $n$.

\begin{thm}\label{T2} Let condition (\ref{condition}) hold and $\alpha
\geqslant C_2$. Then, for all $n=1,2,\dots$,
\begin{equation}
\ab{F_n-(G_1^nV_1+E
)}_K\leqslant C(d+1) \bigl(\gamma\ee^{-Cn\gamma}+(\beta +4
\gamma)^n \bigr). \label{Rez3}
\end{equation}
\end{thm}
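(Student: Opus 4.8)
\emph{Plan of proof.}
The natural route is the characteristic function method already used for Theorem~\ref{T1}. First note that for every $n$ the total masses agree: $\w G_1(0)^n\w V_1(0)+\w E(0)=0+1=1=\w F_n(0)$ (from Table~\ref{tab1}: each numerator of $\w V_1$ carries a factor $\ee^{\ii ta}-1$, so $\w V_1(0)=0$, while the factor $\alpha\gamma$ cancels in $\w E(0)$). Hence $M:=F_n-(G_1^nV_1+E)\in M_{\ZZ}$ has zero total mass, and I would invoke the Tsaregradskii inequality
\[
\knorm{M}\leqslant\frac{1}{2\pi}\int_{-\pi}^{\pi}\Ab{\frac{\w M(t)}{\eit-1}}\DD t ,
\]
which reduces the problem to a pointwise bound of $\w M(t)/(\eit-1)$ on $[-\pi,\pi]$ by $C(d+1)\bigl(\gamma\ee^{-Cn\gamma}+(\beta+4\gamma)^n\bigr)$.

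Next I would write $\w F_n(t)$ exactly. Since $a_3$ is absorbing and $f(a_3)=d$, the matrix $\w P(t)=(P_{ij}\ee^{\ii tf(a_j)})_{i,j=1}^{3}$ is block triangular, so $\w F_n(t)=e_1^{\top}\w P(t)^n\mathbf 1$ splits into a survival part $e_1^{\top}\w Q(t)^n\mathbf 1_2$, driven by the $2\times2$ block $\w Q(t)$ on $\{a_1,a_2\}$ (with $\operatorname{tr}\w Q(t)=1-\gamma+\beta\eit$ and $\det\w Q(t)=\eit(\beta-\gamma(1-\alpha))$), plus the contribution of the absorbed paths. Under condition (\ref{condition}) the eigenvalues $\lambda_+(t),\lambda_-(t)$ of $\w Q(t)$ are distinct with $\ab{\lambda_+(t)-\lambda_-(t)}$ bounded away from $0$ ($\lambda_+$ dominant), and one gets an identity $\w F_n(t)=\Pi_+(t)\lambda_+(t)^n+\Pi_-(t)\lambda_-(t)^n+\w E(t)$, where $\w E(t)$ is exactly the part of the absorption term proportional to $\ee^{\ii ndt}$ and $\Pi_\pm$ are explicit resolvent-type coefficients involving $1/(\ee^{\ii dt}-\lambda_\pm(t))$. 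The decisive point is that, since $\w F_n(0)=\w E(0)=1$ for all $n$ and $\lambda_+(0)\neq\lambda_-(0)$, necessarily $\Pi_+(0)=\Pi_-(0)=0$ (concretely the cancellation comes from $(1-\lambda_+(0))(1-\lambda_-(0))=\gamma\alpha$). Thus $\Pi_\pm$ vanish at $t=0$, and one checks $\ab{\Pi_+(t)/(\eit-1)}\leqslant C(d+1)/\gamma$ (the $1/\gamma$ coming from the near-singularity $\ee^{\ii dt}\approx\lambda_+(t)$, which occurs near $t=0$ and $t=2\pi k/d$), while $\ab{\Pi_-(t)/(\eit-1)}\leqslant C(d+1)\gamma$ since $\ab{\lambda_-(t)}\leqslant\beta+4\gamma\leqslant0.35$ forbids any such near-singularity for $\Pi_-$.

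The estimates then split $\w M(t)=\bigl[\Pi_+(t)\lambda_+(t)^n-\w G_1(t)^n\w V_1(t)\bigr]+\Pi_-(t)\lambda_-(t)^n$. The second term already contributes at most $C(d+1)\gamma(\beta+4\gamma)^n$ after dividing by $\eit-1$. For the first term I would expand $\log\lambda_+(t)=\w A_1(t)\gamma+\bigl(\w A_2(t)+\w A_4(t)-\tfrac12\w A_1^2(t)\bigr)\gamma^2+r(t)$ with $\ab{r(t)}\leqslant C\gamma^3$ uniformly on $[-\pi,\pi]$ --- this is precisely why $\w G_1$ is defined as the exponential of the first two terms. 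Here $\alpha\geqslant C_2$ is essential: it gives $\operatorname{Re}\log\lambda_+(t)\leqslant-C\gamma$ and $\ab{\w G_1(t)}\leqslant\ee^{-C\gamma}$ uniformly, hence $\ab{\lambda_+(t)}^n,\ab{\w G_1(t)}^n\leqslant\ee^{-Cn\gamma}$, so $\ab{\lambda_+(t)^n-\w G_1(t)^n}\leqslant Cn\gamma^3\ee^{-Cn\gamma}$. Combining this with $\ab{\Pi_+(t)/(\eit-1)}\leqslant C(d+1)/\gamma$ and with $\ab{(\Pi_+(t)-\w V_1(t))/(\eit-1)}\leqslant C(d+1)\gamma$ (which is what the particular form of $\w V_1$, built from $\w\Delta_1$ and $\w\Delta$, is designed to deliver) yields
\[
\Ab{\frac{\Pi_+(t)\lambda_+(t)^n-\w G_1(t)^n\w V_1(t)}{\eit-1}}\leqslant C(d+1)n\gamma^2\ee^{-Cn\gamma}+C(d+1)\gamma\ee^{-Cn\gamma}\leqslant C(d+1)\gamma\ee^{-Cn\gamma},
\]
the last step absorbing the polynomial factor $n\gamma$ into the exponent at the cost of a smaller $C$ (legitimate since $n\gamma\ee^{-Cn\gamma}\leqslant C'\ee^{-C''n\gamma}$). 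Adding the two contributions and integrating over $[-\pi,\pi]$ gives exactly the bound (\ref{Rez3}).

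The main obstacle is this last step: proving, uniformly in $t\in[-\pi,\pi]$, the remainder bound $\ab{r(t)}\leqslant C\gamma^3$ in the expansion of $\log\lambda_+(t)$, and the matching of the somewhat intricate measure $\w V_1$ with the resolvent coefficient $\Pi_+$ up to the relative precision $O(\gamma^2)$ --- delicate precisely where $\ee^{\ii dt}$ is close to $\lambda_+(t)$, where $\Pi_+$ is large. The use of $\alpha\geqslant C_2$, which supplies the genuine extra factor $\ee^{-Cn\gamma}$ (not available in Theorem~\ref{T1}, where $\alpha$ may be of order $1/(n\gamma)$ and one must settle for a Berry--Esseen-type $\sqrt{\gamma/n}$), and the bookkeeping of the factor $d+1$, which enters only through the exponents $d$ and $d+1$ in $\w E$, $\w V_1$ and $\Pi_\pm$, then finish the proof.
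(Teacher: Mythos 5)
Your proposal is correct and follows essentially the same route as the paper: Tsaregradskii inversion, the Perron decomposition $\w F_n=\w\varLambda_1^n\w W_1+\w\varLambda_2^n\w W_2+\w E$, a telescoping split of $\w\varLambda_1^n\w W_1-\w G_1^n\w V_1$, and the three estimates $\ab{\w\varLambda_1(t)}\leqslant\ee^{-C\gamma}$, $\ab{\w\varLambda_1-\w G_1}\leqslant C\gamma^3$, $\ab{\w W_1-\w V_1}\leqslant C(d+1)\gamma\ab{\eit-1}$, all relying on $\alpha\geqslant C_2$ exactly as in the paper's Lemmas \ref{w1_a}, \ref{w1-v1_a}, and \ref{G_a}. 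The only cosmetic difference is that you telescope as $\w W_1(\w\varLambda_1^n-\w G_1^n)+\w G_1^n(\w W_1-\w V_1)$ while the paper uses $\w\varLambda_1^n(\w W_1-\w V_1)+\w V_1(\w\varLambda_1^n-\w G_1^n)$, which leads to the same bounds.
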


Observe that the accuracy of approximation in (\ref{Rez3}) is at least of
order $O(n^{-1})$. This accuracy is reached if $\gamma=O(n^{-1})$.

If both probabilities are uniformly separated from zero, $F_n$ is
exponentially close to the measure $E$.

\begin{thm}\label{T3} Let condition (\ref{condition}) hold and $\alpha,
\gamma\geqslant C_2$. Then, for all $n=1,2,\dots$,
\begin{equation}
\norm{F_n-E}\leqslant C(d+1)\ee^{-Cn}.\label{Rez4}
\end{equation}
\end{thm}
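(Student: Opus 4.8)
\textbf{Proof plan for Theorem \ref{T3}.} The plan is to avoid estimating any norm by Fourier inversion and instead to exploit the absorbing structure of the chain directly, using the characteristic function method only to identify the measure $E$. Fix $n$ and work with the (time-homogeneous) chain whose transition probabilities $\alpha,\beta,\gamma$ are the ones attached to this index $n$. Let $\tau=\min\{k\geqslant1:\xi_k=a_3\}$ be the death time; since $\xi_0=a_1$ and a healthy person cannot die in a single step, $\tau\geqslant2$. Set $R_j:=f(\xi_1)+\dots+f(\xi_{j-1})$, a function of $\xi_0,\dots,\xi_{j-1}$. On $\{\tau\leqslant n\}$ one has $\xi_k=a_3$ for $\tau\leqslant k\leqslant n$, so $S_n=R_\tau+(n-\tau+1)d$, and hence the sub-distribution of $S_n$ on that event is $\tilde E_n\{m\}:=\sum_{j=2}^{n}\Prob(\tau=j,\ R_j=m-(n-j+1)d)$. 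I would then prove the theorem through
\[
\norm{F_n-E}\leqslant\norm{F_n-\tilde E_n}+\norm{\tilde E_n-E},
\]
showing that each summand is at most $\Prob(\tau>n)$ and that $\Prob(\tau>n)$ decays geometrically.

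The geometric tail of $\tau$ is elementary, and is exactly where the hypothesis $\alpha,\gamma\geqslant C_2$ enters: from state $a_2$ absorption occurs in one step with probability $\alpha\geqslant C_2$, and from state $a_1$ it occurs within two steps (via $a_1\to a_2\to a_3$) with probability $\gamma\alpha\geqslant C_2^{2}$; consequently $\Prob(\tau>k+2\mid\tau>k)\leqslant1-C_2^{2}$ for every $k$, and by induction $\Prob(\tau>n)\leqslant(1-C_2^{2})^{\lfloor n/2\rfloor}\leqslant C\ee^{-Cn}$, with $C$ depending only on $C_2$. The first norm is immediate, since $(F_n-\tilde E_n)\{m\}=\Prob(S_n=m,\ \tau>n)\geqslant0$ gives $\norm{F_n-\tilde E_n}=\sum_m\Prob(S_n=m,\ \tau>n)=\Prob(\tau>n)$. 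For the second I need the identification
\[
E\{m\}=\sum_{j=2}^{\infty}\Prob(\tau=j,\ R_j=m-(n-j+1)d);
\]
granting it, $(\tilde E_n-E)\{m\}=-\sum_{j>n}\Prob(\tau=j,\ R_j=m-(n-j+1)d)$, so $\norm{\tilde E_n-E}=\Prob(n<\tau<\infty)\leqslant\Prob(\tau>n)$, whence $\norm{F_n-E}\leqslant2\Prob(\tau>n)\leqslant C(d+1)\ee^{-Cn}$ (the factor $d+1$ is not needed here, but keeps the bound uniform with Theorems \ref{T1}--\ref{T2}).

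It remains to verify the identification of $E$, which is the one genuinely computational step. Restrict the chain to the transient set $\{a_1,a_2\}$ and introduce the weighted transfer matrix $T(t)$ whose $(s,s')$ entry is the transition probability from $s$ to $s'$ multiplied by $\ee^{\ii tf(s')}$, namely
\[
T(t)=\begin{pmatrix}1-\gamma&\gamma\eit\\ 1-\alpha-\beta&\beta\eit\end{pmatrix}.
\]
Summing over paths $a_1=s_0,s_1,\dots,s_{j-1}$ that stay in $\{a_1,a_2\}$ with $s_{j-1}=a_2$, each weighted by its probability and by $\ee^{\ii t(f(s_1)+\dots+f(s_{j-1}))}$, and then multiplying by the absorption probability $\alpha$ at step $j$, gives $\mathrm{E}\bigl[\ee^{\ii tR_j}\mathbf 1_{\{\tau=j\}}\bigr]=\alpha\,[T(t)^{j-1}]_{a_1a_2}$, with $\bigl|[T(t)^{j-1}]_{a_1a_2}\bigr|\leqslant\Prob(\tau\geqslant j)$ decaying geometrically. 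Hence the Fourier transform of the measure $m\mapsto\sum_{j\geqslant2}\Prob(\tau=j,\ R_j=m-(n-j+1)d)$ equals
\[
\ee^{(n+1)d\ii t}\sum_{j\geqslant2}\ee^{-jd\ii t}\,\alpha\,[T(t)^{j-1}]_{a_1a_2}=\alpha\,\ee^{nd\ii t}\bigl[(I-\ee^{-d\ii t}T(t))^{-1}\bigr]_{a_1a_2},
\]
the inverse existing because the transient block of the chain has spectral radius below $1$. A routine $2\times2$ inversion, followed by multiplying numerator and denominator by $\ee^{(2d-1)\ii t}$, turns the right-hand side into precisely the entry $\w E(t)$ of Table \ref{tab1}; since a measure in $M_{\ZZ}$ is determined by its Fourier transform, the identification follows. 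I would finally note that, although $\alpha,\beta,\gamma$ depend on $n$ in the scheme of series, this is harmless: every object above ($\tau$, $R_j$, $\tilde E_n$, $T$, $E$) refers to the single chain with the parameters of the fixed index $n$. The main obstacle is thus the resolvent computation in the last display (together with spotting that $E$ is the $n\to\infty$ completion of $\tilde E_n$); everything else is bookkeeping and the elementary tail estimate.
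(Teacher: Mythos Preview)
Your proof is correct and takes a genuinely different route from the paper's.

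The paper proves Theorem~\ref{T3} purely analytically: from the Perron decomposition $\w F_n=\w\varLambda_1^n\w W_1+\w\varLambda_2^n\w W_2+\w E$ of Lemma~\ref{lema_1}, it applies the total-variation inversion formula~(\ref{TVAP}) to each of $\varLambda_1^nW_1$ and $\varLambda_2^nW_2$, invoking the pointwise bounds of Lemma~\ref{nelygybes} (in particular $\ab{\w\varLambda_1}\leqslant\ee^{-C\gamma}\leqslant\ee^{-C}$ when $\gamma\geqslant C_2$, $\ab{\w\varLambda_2}\leqslant0.35$, and the corresponding bounds on $\w W_{1,2}$ and their derivatives). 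The constant $(d+1)$ is inherited from the bounds on $\w W_{1,2}$.

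Your argument replaces all of this by the absorption-time decomposition $F_n=\tilde E_n+(F_n-\tilde E_n)$ and the two-step minorization $\Prob(\tau>k+2\mid\tau>k)\leqslant1-C_2^2$; the only analytic ingredient is the $2\times2$ resolvent computation that identifies $E$ with the ``completed'' absorption measure $\sum_{j\geqslant2}\Prob(\tau=j,\,R_j=\cdot-(n-j+1)d)$. This is more elementary and more informative: it bypasses Lemmas~\ref{sqrtD_short}--\ref{nelygybes} entirely, it explains \emph{why} $E$ is the right limit (it is a genuine probability measure, the law of $R_\tau+(n-\tau+1)d$), and it yields the sharper bound $\norm{F_n-E}\leqslant2\Prob(\tau>n)\leqslant C\ee^{-Cn}$ with no $(d+1)$ factor. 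The paper's approach, in turn, has the advantage of reusing the eigenvalue machinery already built for the harder Theorems~\ref{T1}, \ref{T2}, \ref{T4}, \ref{T5}, so within the paper it comes essentially for free.
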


Observe that, if the scheme of sequences is analyzed, all probabilities do
not depend on $n$ and hence the conditions of Theorem \ref{T3} are satisfied
as long as condition (\ref{condition}) holds. Note also that in Theorem
\ref{T3} the stronger total variation norm is used.

\begin{thm}\label{T4} Let condition (\ref{condition}) hold and $\alpha
\geqslant C_2$. Then, for all $n=1,2,\dots$,
\begin{equation}
\norm{F_n-(G_1^nV_2+E
)}\leqslant C(d+1) \bigl(\gamma\ee^{-Cn\gamma
}(1+\beta/\gamma)+n(\beta+4
\gamma)^n \bigr). \label{Rez5}
\end{equation}
\end{thm}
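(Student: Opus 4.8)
The plan is to derive Theorem \ref{T4} from the same generating-function analysis that underlies Theorems \ref{T1}--\ref{T2}, but now tracking the total-variation norm rather than the Kolmogorov norm. First I would write down the exact Fourier transform $\w{F}_n(t)$ of the aggregate-claims distribution. Conditioning on whether the absorbing state $a_3$ has been reached, $\w{F}_n$ splits into a ``survival'' part (paths that stay in $\{a_1,a_2\}$, driven by the $2\times2$ sub-stochastic block of $P$) and an ``absorbed'' part (paths that have jumped to $a_3$), which after summation produces $\w{E}(t)$. Solving the associated linear recursion in the two transient states gives $\w{F}_n(t)$ as a rational combination whose denominator factors through $\w{A}(t)-\deit$ and $2\w{\Delta}(t)-1+\gamma-\beta\eit$; this is precisely the structure encoded in $\w{V}(t)$, $\w{V}_1(t)$, $\w{V}_2(t)$ in Table \ref{tab1}. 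Under $\alpha\geqslant C_2$ the factor $(\w{A}(t)-\deit)$ may be replaced by $(\w{\Delta}_1(t)-\deit)$ and the outer factor by $2\w{\Delta}_1(t)-1+\gamma-\beta\eit$ at an exponentially small cost, which is why $\w{V}_2$ (rather than $\w{V}$) appears in (\ref{Rez5}).

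Next I would isolate the compound-Poisson piece. The surviving part of $\w{F}_n$ carries a factor of the form $\big(2\w{\Delta}(t)-1+\gamma-\beta\eit\big)^{-1}$ raised effectively to the $n$-th power through the recursion; expanding $\log$ of the per-step factor in powers of $\gamma$ up to order $\gamma^3$ (using $\w{A}_1,\dots,\w{A}_6$ and the correction terms $-\tfrac12(\w{A}_1^2\gamma^2+\cdots)+\tfrac13\w{A}_1^3\gamma^3$) yields exactly $\log\w{G}_1(t)$, so $\w{G}_1^n(t)$ is the natural approximant and the remainder in the exponent is $O(n\gamma^4)$ times a bounded function. The measure-norm translation is the standard Franz--Le Cam / Presman device: for signed measures on $\ZZ$ one has $\norm{M}\leqslant C\big(\int_{-\pi}^\pi|\w{M}(t)|\,\DD t + \int_{-\pi}^\pi|\w{M}'(t)|\,\DD t\big)^{1/2}\big(\int|\w{M}(t)|\,\DD t\big)^{1/2}$ or, more directly here, $\norm{M}\leqslant \big(\int_{-\pi}^\pi |\w M(t)|^2\DD t\big)^{1/2}+\big(\int_{-\pi}^\pi |\w M'(t)|^2\DD t\big)^{1/2}$ up to absolute constants. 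So I would bound $\w{F}_n-(\w{G}_1^n\w{V}_2+\w{E})$ and its derivative and integrate.

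The decisive estimates are then (i) $|\w{G}_1(t)|\leqslant \exp\{-Cn\gamma(1-\cos t)\}$, which follows because the real part of the exponent is dominated by the $\w{A}_1\gamma$ term and $\mathrm{Re}\,\w{A}_1(t)\leqslant -C(1-\cos t)$ under condition (\ref{condition}); this gives the Gaussian-type decay $\int_{-\pi}^\pi |\w{G}_1^n(t)|\,\DD t \leqslant C/\sqrt{n\gamma}$ and, crucially, an extra factor of $\sqrt\gamma$ upon differentiation because each $t$-derivative of the exponent brings down a factor $\gamma$; the product $\sqrt\gamma\cdot(1/\sqrt{n\gamma})=1/\sqrt n$ together with a second such factor from the square-root bound produces the claimed $\gamma\,\ee^{-Cn\gamma}$-type decay (with the $1+\beta/\gamma$ accounting for the $\beta$-dependence of $\w V_2$ and of $\w H$). (ii) The difference $\w{G}_1^n-($the exact surviving factor$)$ is controlled by $|e^z-e^w|\leqslant |z-w|\max(e^{\mathrm{Re}\,z},e^{\mathrm{Re}\,w})$ with $|z-w|\leqslant Cn\gamma^4(1-\cos t)$, which is absorbed into the Gaussian weight and yields another $O(\gamma)$ gain — hence contributes to the $\gamma\ee^{-Cn\gamma}$ term. (iii) The purely ``absorbed'' discrepancy, i.e. the error from replacing $\w A$ by $\w{\Delta}_1$ in the death-component, is $O((\beta+4\gamma)^n)$ per Fourier coefficient, and since we are now in total variation we must sum over all $k$, which is what turns the $(\beta+4\gamma)^n$ of (\ref{Rez1}) into $n(\beta+4\gamma)^n$ in (\ref{Rez5}); concretely the number of lattice points carrying non-negligible mass grows linearly in $n$ (claim sizes range up to $\sim dn$), and each geometric-type tail term must be counted, giving the extra factor $n$.

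The main obstacle I anticipate is controlling the derivative integral $\int_{-\pi}^\pi|\tfrac{\DD}{\DD t}(\w F_n-\w G_1^n\w V_2-\w E)|\,\DD t$ uniformly: differentiating the $n$-fold structure produces terms where the derivative falls on a non-exponential factor (such as $(1-\beta\eit)^{-1}$ inside $\w A_4,\w A_6$ or on $\w V_2$ itself), and one must check these are still dominated by $n\gamma(1-\cos t)\exp\{-Cn\gamma(1-\cos t)\}$-type weights so that the integral stays $O(\sqrt\gamma)\cdot O((n\gamma)^{-1/2})$ after the square-root trick. Equivalently, one needs that every ``bad'' factor is bounded away from zero on $[-\pi,\pi]$ and smooth, which is exactly where condition (\ref{condition}) ($\beta\leqslant0.15$, $\gamma\leqslant0.05$, $\alpha+\beta<1$) and $\alpha\geqslant C_2$ are used — they guarantee $|1-\beta\eit|\geqslant 1-\beta$, $|2\w{\Delta}_1(t)-1+\gamma-\beta\eit|\geqslant C>0$, and $|\w{\Delta}_1(t)-\deit|\geqslant C>0$, so no denominator degenerates. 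Once those lower bounds are in hand, the rest is the bookkeeping sketched above, collecting the $\gamma\ee^{-Cn\gamma}(1+\beta/\gamma)$ contribution from the compound-Poisson block and the $n(\beta+4\gamma)^n$ contribution from the absorbing block.
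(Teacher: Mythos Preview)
Your proposal has a genuine structural gap: you misidentify where the $(\beta+4\gamma)^n$ term comes from and why it picks up the factor $n$. In the paper's decomposition (Lemma~\ref{lema_1}) one has the exact Perron formula $\w{F}_n=\w{\varLambda}_1^n\w{W}_1+\w{\varLambda}_2^n\w{W}_2+\w{\varLambda}_3^n\w{W}_3$, and $\w{\varLambda}_3^n\w{W}_3$ equals $\w{E}$ \emph{exactly} --- there is no ``absorbed discrepancy'' to control. The $(\beta+4\gamma)^n$ contribution is the \emph{second eigenvalue} piece $\varLambda_2^nW_2$, bounded via $\ab{\w{\varLambda}_2(t)}\leqslant\beta+4\gamma$ (Lemma~\ref{lambda_2}). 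The extra factor $n$ is not a lattice-point count; it comes from differentiating $\w{\varLambda}_2^n$ inside the total-variation inversion formula~(\ref{TVAP}): $\ab{(\w{\varLambda}_2^n\w{W}_2)'}\leqslant n\ab{\w{\varLambda}_2}^{n-1}\ab{\w{\varLambda}_2'}\ab{\w{W}_2}+\ab{\w{\varLambda}_2}^n\ab{\w{W}_2'}\leqslant C(d+1)n(\beta+4\gamma)^n$. Without this three-eigenvalue split you do not cleanly isolate the terms, and your ``replacing $\w A$ by $\w{\varDelta}_1$ in the death-component'' produces no such geometric tail.

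Two smaller but real issues. First, under $\alpha\geqslant C_2$ the key bound is the \emph{uniform} estimate $\ab{\w{\varLambda}_1(t)},\,\ab{\w{G}_1(t)}\leqslant\ee^{-C\gamma}$ for all $t$ (Lemma~\ref{nelygybes}); the Gaussian-type decay $\exp\{-Cn\gamma(1-\cos t)\}$ and the $\int\leqslant C/\sqrt{n\gamma}$ machinery are unnecessary here and lead you to the wrong bookkeeping. The paper simply uses $\ab{\w{\varLambda}_1-\w{G}_1}\leqslant C\gamma^3$ (not $\gamma^4$), hence $\ab{\w{\varLambda}_1^n-\w{G}_1^n}\leqslant Cn\gamma^3\ee^{-C(n-1)\gamma}$, multiplies by $\ab{\w{W}_1}\leqslant C(d+1)/\gamma$, and integrates trivially. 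Second, your claimed lower bounds $\ab{\w{\varDelta}_1(t)-\deit}\geqslant C>0$ and $\ab{2\w{\varDelta}_1(t)-1+\gamma-\beta\eit}\geqslant C>0$ are off by a factor of $\gamma$: the correct bounds are $\geqslant C\gamma$ (see~(\ref{V1_vard})), which is exactly why $\ab{\w{V}_2},\ab{\w{W}_1}$ carry $1/\gamma$ and why the $(1+\beta/\gamma)$ appears after differentiating $\w{W}_1$ and $\w{V}_2$.
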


\begin{cor}\label{Isvada2} Let condition (\ref{condition}) hold and
$\alpha\geqslant C_2$. Then, for all $n=1,2,\dots$,
\begin{equation}
\norm{F_n-(G_1^nV_2+E
)}\leqslant\frac{C(d+1)\ee^{-Cn\gamma}}{n} \biggl(1+\frac{\beta}{\gamma}
\biggr).\label{Rez5a}
\end{equation}
\end{cor}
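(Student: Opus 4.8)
The plan is to obtain Corollary~\ref{Isvada2} directly from Theorem~\ref{T4}, i.e. from the bound~(\ref{Rez5}), by dominating each of its two summands by a constant multiple of $n^{-1}(d+1)\ee^{-Cn\gamma}(1+\beta/\gamma)$. Both reductions are elementary inequalities for exponentials and powers, so I do not expect a genuine obstacle; the only point to watch is that the exponential factor $\ee^{-Cn\gamma}$ (with $C$ an absolute constant, possibly strictly smaller than the one appearing in Theorem~\ref{T4}) is retained in each estimate. Since in this paper $C$ denotes an arbitrary absolute constant that may change from line to line, such relabellings are harmless.

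For the first summand $\gamma\ee^{-Cn\gamma}(1+\beta/\gamma)$ I would split the exponent as $Cn\gamma=\tfrac12Cn\gamma+\tfrac12Cn\gamma$ and use the elementary bound $x\ee^{-x}\leqslant\ee^{-1}$ for $x\geqslant0$ with $x=\tfrac12Cn\gamma$. This gives $\gamma\ee^{-\frac12Cn\gamma}=\frac{2}{Cn}\bigl(\tfrac12Cn\gamma\bigr)\ee^{-\frac12Cn\gamma}\leqslant\frac{2}{\ee Cn}$, hence $\gamma\ee^{-Cn\gamma}(1+\beta/\gamma)\leqslant\frac{C}{n}\ee^{-\frac12Cn\gamma}(1+\beta/\gamma)$, which after relabelling $\tfrac12C$ as $C$ is already of the desired form.

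For the second summand $n(\beta+4\gamma)^n$ I would use that, by condition~(\ref{condition}), $q:=\beta+4\gamma\leqslant0.35$ and $\gamma\leqslant0.05$; thus one can fix an absolute constant $C>0$ so small that $q\,\ee^{C\gamma}\leqslant0.35\,\ee^{0.05C}\leqslant q_0<1$ for a fixed absolute $q_0$ (for instance $q_0=0.4$, $C=1$). Then $n(\beta+4\gamma)^n\ee^{Cn\gamma}=n\,(q\ee^{C\gamma})^n\leqslant n\,q_0^{\,n}\leqslant\frac{1}{n}\sup_{m\geqslant1}m^2q_0^{\,m}\leqslant\frac{C}{n}$, because $\sup_{m\geqslant1}m^2q_0^{\,m}<\infty$ for $q_0\in(0,1)$. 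Consequently $n(\beta+4\gamma)^n\leqslant\frac{C}{n}\ee^{-Cn\gamma}\leqslant\frac{C}{n}\ee^{-Cn\gamma}(1+\beta/\gamma)$, the last step since $1+\beta/\gamma\geqslant1$. Adding the two estimates to~(\ref{Rez5}) yields $\norm{F_n-(G_1^nV_2+E)}\leqslant C(d+1)n^{-1}\ee^{-Cn\gamma}(1+\beta/\gamma)$, which is~(\ref{Rez5a}).
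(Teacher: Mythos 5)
Your proof is correct and follows exactly the route the paper leaves implicit: the corollary is not proved in the paper at all, being treated as an immediate consequence of Theorem~\ref{T4}, and the two elementary reductions you carry out — $\gamma\ee^{-Cn\gamma}\leqslant C n^{-1}\ee^{-Cn\gamma/2}$ via $x\ee^{-x}\leqslant\ee^{-1}$, and $n(\beta+4\gamma)^n\leqslant Cn^{-1}\ee^{-Cn\gamma}$ via $\beta+4\gamma\leqslant0.35$, $\gamma\leqslant0.05$ — are precisely the ones that deduce~(\ref{Rez5a}) from~(\ref{Rez5}).
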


\begin{remark}
The local estimates in Theorem \ref{T2}, \ref{T3}, and \ref{T4} have the
same order as in (\ref{Rez3}), (\ref{Rez4}), and (\ref{Rez5}), hence we
do no
formulate them separately.
\end{remark}

In insurance models,\index{insurance models} tail probabilities are very important, see, for example
\cite{LS17,WGYCh18,YW13}. Therefore, we formulate some non-uniform
estimates for the case when $\alpha$ is uniformly separated from zero.

\begin{thm}\label{T5} Let condition (\ref{condition}) hold and $\alpha
\geqslant C_2$. Then, for any integer $k\geqslant1$ and $n\in\NN$,
\begin{eqnarray}
\ab{F_n\{k\}-(G_1^nV_2+E
)\{k\}}\!\!\!\!&\leqslant&\!\!\!\! \frac{C(d+1)\ee^{-Cn\gamma
}(\beta+\gamma)}{n(\beta+(k+1)\gamma)}.\label{Rez6}
\\
\ab{F_n(k)-(G_1^nV_2+E
) (k)}\!\!\!\!&\leqslant&\!\!\!\! \frac{Cd^2\ee^{-Cn\gamma
}}{n(1+k\gamma^2)}.\label{Rez7}
\end{eqnarray}
\end{thm}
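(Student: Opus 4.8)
\emph{Proof plan.} Put $R_n:=F_n-(G_1^nV_2+E)$. Since $F_n$ is concentrated on $\{0,1,\dots,dn\}$, and since from the table $\w{E}(0)=1$ while every numerator occurring in $\w{V}_2$ vanishes at $t=0$ (so $\w{V}_2(0)=0$), we have $R_n\in M_{\ZZ}$, with $\w{R}_n$ smooth and $2\pi$-periodic and $\w{R}_n(0)=0$. The plan is to obtain both (\ref{Rez6}) and (\ref{Rez7}) from the inversion formulas
\[
R_n\{k\}=\frac1{2\pi}\int_{-\pi}^{\pi}\ee^{-\ii kt}\w{R}_n(t)\,\DD t,\qquad
R_n(k)=\frac1{2\pi}\int_{-\pi}^{\pi}\ee^{-\ii kt}\,\frac{\w{R}_n(t)}{1-\eit}\,\DD t
\]
(the second being admissible because $\w{R}_n(0)=0$ makes the integrand bounded at $t=0$), followed by a \emph{single} integration by parts in $t$, whose boundary terms vanish by periodicity. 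This gives $\ab{R_n\{k\}}\le\min\{A_0,A_1/k\}$ and $\ab{R_n(k)}\le\min\{B_0,B_1/k\}$ with
\[
A_m:=\frac1{2\pi}\int_{-\pi}^{\pi}\bigl|\w{R}_n^{(m)}(t)\bigr|\,\DD t,\qquad
B_m:=\frac1{2\pi}\int_{-\pi}^{\pi}\biggl|\frac{\DD^m}{\DD t^m}\,\frac{\w{R}_n(t)}{1-\eit}\biggr|\,\DD t\qquad(m=0,1),
\]
and, granting $A_0\le C(d+1)\ee^{-Cn\gamma}/n$, $A_1\le C(d+1)\ee^{-Cn\gamma}(\beta+\gamma)/(n\gamma)$, $B_0\le Cd^2\ee^{-Cn\gamma}/n$, $B_1\le Cd^2\ee^{-Cn\gamma}/(n\gamma^2)$, both estimates follow from the elementary inequality $\min\{a,b/k\}\le 2ab/(ka+b)$ together with the bounds $\beta+(k+1)\gamma\le2\max\{\beta,k\gamma\}$ and $1+k\gamma^2\le2\max\{1,k\gamma^2\}$, valid for $k\ge1$.

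It remains to prove the four displayed estimates, which I would do by revisiting the representation of $\w{R}_n(t)$ built in the proof of Theorem~\ref{T4}. There $\w{R}_n$ is a finite sum of terms, each a product of: a power of $\w{G}_1^n(t)$, which under (\ref{condition}) with $\alpha\ge C_2$ satisfies $\ab{\w{G}_1^n(t)}\le C\ee^{-Cn\gamma}$ on $[-\pi,\pi]$ (since $\mathrm{Re}\,\w{A}_1(t)\le-C\alpha\le-CC_2$ there) and has $t$-derivative equal to $\w{G}_1^n$ times a factor bounded by $Cn\gamma$; exactly one of $\eit-1$, $\deit-1$, $\ee^{(d+1)\ii t}-1$, which vanish at $t=0$, carry the powers of $d$, and cancel the singular $(1-\eit)^{-1}$ in the second inversion formula (for instance $(\deit-1)/(1-\eit)=-d+(\eit-1)\,(\text{bounded})$); bounded resolvent factors such as $(1-\beta\eit)^{-1}$, $(\w{A}(t)-\deit)^{-1}$, $(2\w{\Delta}_1(t)-1+\gamma-\beta\eit)^{-1}$, whose moduli and $t$-derivatives stay bounded under (\ref{condition}) by the denominator lower bounds already established for Theorem~\ref{T4}; and a small scalar from $\{\gamma,\beta,\gamma^2,\beta\gamma,\beta-\gamma(1-\alpha)\}$. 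Taking absolute values, integrating, and using $x^j\ee^{-Cx}\le C_j\ee^{-Cx/2}$ and $n\gamma^2\le C$ to absorb into the exponential the stray powers of $n\gamma$ produced when a derivative falls on $\w{G}_1^n$, one recovers the $\ee^{-Cn\gamma}/n$ prefactor; the single extra differentiation required for $m=1$ costs a bounded amount which, as one checks term by term, multiplies the $m=0$ bound by $(\beta+\gamma)/\gamma$ in the local case and by $1/\gamma^2$ in the distribution-function case (the latter because $\frac{\DD}{\DD t}(1-\eit)^{-1}$ has the size of $(1-\eit)^{-2}$). Summing the finitely many terms then yields $A_0,A_1,B_0,B_1$ as claimed.

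The main obstacle is the differentiated distribution-function representation near $t=0$. One must check that
\[
\frac{\DD}{\DD t}\,\frac{\w{R}_n(t)}{1-\eit}=\frac{\w{R}_n'(t)(1-\eit)+\ii\eit\w{R}_n(t)}{(1-\eit)^2}
\]
is absolutely integrable at $t=0$, i.e.\ that the numerator vanishes there to order $\ge2$; this holds exactly because $\w{R}_n(0)=0$ (so the two a priori $1/t$-type singular contributions cancel), but it has to be dovetailed with the cancellation coming from the $\eit-1$, $\deit-1$, $\ee^{(d+1)\ii t}-1$ factors present in each summand in order for the resulting bound to remain $O(d^2)$ rather than $O(d^3)$. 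The second delicate point is the $d$-dependence of $\frac{\DD}{\DD t}(\w{A}(t)-\deit)^{-1}=-(\w{A}(t)-\deit)^{-2}(\w{A}'(t)-\ii d\,\deit)$: the explicit factor $d$ here is exactly what turns the $(d+1)$ of (\ref{Rez6}) into the $d^2$ of (\ref{Rez7}), and keeping it under control requires the lower bound on $\ab{\w{A}(t)-\deit}$ from the proof of Theorem~\ref{T4} together with $\ab{\deit-1}\le Cd\min\{1,d\ab{t}\}$ near $t=0$.
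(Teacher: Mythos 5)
Your approach is essentially the paper's: you deduce the non-uniform bounds by combining a uniform (local/Kolmogorov) estimate with a single-integration-by-parts $k^{-1}$ estimate, both obtained from the same decomposition $\w{R}_n=(\w\varLambda_1^n-\w G_1^n)\w W_1+\w G_1^n(\w W_1-\w V_2)+\w\varLambda_2^n\w W_2$ and the pointwise derivative bounds already developed in the proof of Theorem~\ref{T4}. The paper packages the same content by citing the inversion inequalities (\ref{non-uniform_1})--(\ref{non-uniform_2}) with $a=0$ and then ``summing'' $\ab{R_n\{k\}}\le A_0$ with $k\,C^{-1}\ab{R_n\{k\}}\le A_0$, which is exactly your $\min\{A_0,A_1/k\}\le 2A_0A_1/(kA_0+A_1)$ step, so the two arguments coincide.
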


\begin{remark}
The non-uniform estimate for distribution functions (\ref{Rez7}) is quite
inaccurate if $\gamma$ is small. On the other hand, the local non-uniform
estimate is at least of order $O(n^{-1}k^{-1})$, when $\beta$ is of the same
order as $\gamma$.
\end{remark}

When $\gamma$ is uniformly separated from zero and $\alpha$ is small,
estimate (\ref{Rez1}) could not be simplified.
%The Fourier transform of measure $E$ is examined in Section
%\ref{E_tyrimas}.

\section{Auxiliary results}

%%%%%%%%%%%%%%%%%
%APVERTIMO FORMUL{\chr"C4}{\chr"96}S
%\subsection{Inversion inequalities}
We begin from the inversion inequalities.

\begin{lemma}\label{apvertimai} Let $M\in M_{\ZZ}$. Then
\begin{eqnarray}
\knorm{M}\!\!\!\!&\leqslant&\!\!\!\!\frac{1}{2\pi}\int_{-\pi}^{\pi}
\frac{\ab{\w
{M}(t)}}{\ab{\eit-1}}\DD t, \label{Tsaregradskii}
\\
\norm{M}_{\infty}\!\!\!\!&\leqslant&\!\!\!\!\frac{1}{2\pi}\int_{-\pi}^\pi
\ab{\w M(t)}\, \hbox{\rm d} t. \label{LAP}
\end{eqnarray}
If, in addition, $\sum_{k\in\ZZ}\ab{k}\ab{M\{k\}}<\infty$, then
\begin{equation}
\norm{M}\leqslant (1+b\pi )^{1/2} \biggl(\frac{1}{2\pi}\int \limits
_{-\pi}^{\pi}
\ab{\w M(t)}^2+ \frac{1}{b^2}\ab{ (\hbox{\rm
e}^{-\ii ta}\w M(t) )'}^2\hbox{\rm d}t
\biggr)^{1/2}, \label{TVAP}
\end{equation}
and, for any $a\in\RR,b>0$,
\begin{eqnarray}
\ab{k-a}\ab{M\{k\}}\!\!\!\!&\leqslant&\!\!\!\!\frac{1}{2\pi}\int_{-\pi}^{\pi}
\ab{(\w {M}(t)\ee^{-\ii ta})'}\DD t, \label{non-uniform_1}
\\
\ab{k-a}\ab{M(k)}\!\!\!\!&\leqslant&\!\!\!\!\frac{1}{2\pi}\int_{-\pi}^{\pi}
\AB{ \biggl(\frac{\w{M}(t)}{\ee^{-\ii t}-1}\ee^{-\ii ta} \biggr)'}\DD t.
\label
{non-uniform_2}
\end{eqnarray}
\end{lemma}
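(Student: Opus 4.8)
The plan is to establish the five displayed inversion inequalities by applying the standard Fourier inversion formula $M\{k\}=\frac{1}{2\pi}\int_{-\pi}^{\pi}\ee^{-k\ii t}\w M(t)\,\DD t$ and then manipulating the integrand by (a) partial summation (Abel summation) to treat the distribution-function quantities $M(x)=\sum_{j\leqslant x}M\{j\}$, and (b) integration by parts together with the Cauchy--Schwarz inequality for the $L^2$-type bound \eqref{TVAP}.

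For \eqref{Tsaregradskii}, I would note that $M(x)$ is constant between consecutive integers, so it suffices to bound $\ab{M(k)}$ for $k\in\ZZ$; writing $M(k)=\sum_{j\leqslant k}M\{j\}$ and using the fact that the Fourier transform of the partial-sum sequence is $\w M(t)/(1-\ee^{\ii t})$ up to a shift (equivalently, $\sum_j M(j)\ee^{-\ii jt}$ relates to $\w M(t)/(\eit-1)$), the inversion formula gives $\ab{M(k)}\leqslant\frac{1}{2\pi}\int_{-\pi}^{\pi}\frac{\ab{\w M(t)}}{\ab{\eit-1}}\,\DD t$ (this is the classical Tsaregradskii inequality). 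For \eqref{LAP}, one simply takes absolute values inside the inversion integral: $\ab{M\{k\}}\leqslant\frac{1}{2\pi}\int_{-\pi}^{\pi}\ab{\w M(t)}\,\DD t$ uniformly in $k$, hence the same bound for $\localnorm{M}=\sup_k\ab{M\{k\}}$. For the non-uniform estimates \eqref{non-uniform_1} and \eqref{non-uniform_2}, I would multiply the inversion formula by $(k-a)$, observe that $(k-a)\ee^{-k\ii t}\ee^{\ii ta}=\ii\frac{\DD}{\DD t}(\ee^{-\ii(k-a)t})\cdot(-1)$, i.e. $(k-a)\ee^{-\ii(k-a)t}=\ii(\ee^{-\ii(k-a)t})'$, then integrate by parts in $t$ over $[-\pi,\pi]$. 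The boundary terms vanish by $2\pi$-periodicity of $\ee^{-\ii(k-a)t}\w M(t)\ee^{\ii ta}$ in the combination that appears (the factor $\ee^{-\ii kt}$ is $2\pi$-periodic), leaving $\ab{k-a}\ab{M\{k\}}\leqslant\frac{1}{2\pi}\int_{-\pi}^{\pi}\ab{(\w M(t)\ee^{-\ii ta})'}\,\DD t$; applying this to the measure with Fourier transform $\w M(t)/(\ee^{-\ii t}-1)$ (whose ``distribution function'' evaluated at $k$ is $M(k)$, again up to the partial-summation identification) yields \eqref{non-uniform_2}.

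For the total-variation bound \eqref{TVAP}, the idea is the weighted Cauchy--Schwarz / Parseval argument: write $\norm{M}=\sum_k\ab{M\{k\}}=\sum_k(1+b\ab{k-a}^2)^{-1/2}\cdot(1+b\ab{k-a}^2)^{1/2}\ab{M\{k\}}$ — wait, more precisely one uses $\sum_k(1+b^{-1}\ldots)$; the clean route is $\norm{M}\leqslant\big(\sum_k\frac{1}{1+c k^2}\big)^{1/2}\big(\sum_k(1+ck^2)\ab{M\{k\}}^2\big)^{1/2}$ for a suitable constant, bounding the first sum by $1+\pi/\sqrt c$ type estimate and identifying the second via Parseval: $\sum_k\ab{M\{k\}}^2=\frac{1}{2\pi}\int_{-\pi}^\pi\ab{\w M(t)}^2\,\DD t$ and $\sum_k k^2\ab{M\{k\}}^2=\frac{1}{2\pi}\int_{-\pi}^\pi\ab{\w M(t)'}^2\,\DD t$ (after the shift by $\ee^{-\ii ta}$ to center at $a$). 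Choosing the free parameter to produce the factor $(1+b\pi)^{1/2}$ and the $1/b^2$ weight gives exactly \eqref{TVAP}.

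The main obstacle is the rigorous bookkeeping in two places: first, justifying the partial-summation identity that lets $M(k)$ be read off as a Fourier coefficient of $\w M(t)/(\eit-1)$ — one must check that $\w M(t)/(\eit-1)$ is genuinely integrable near $t=0$, which holds precisely because $M\in M_\ZZ$ is a finite signed measure with $\sum_j M\{j\}$ finite and, for \eqref{non-uniform_2}, that the derivative exists and the required moment $\sum_k\ab{k}\ab{M\{k\}}<\infty$ controls differentiation under the integral sign and the vanishing of boundary terms. Second, in \eqref{TVAP} one must pick the comparison constant so that $\sum_{k\in\ZZ}(1+b(k-a)^2/\pi^2)^{-1}$ or the analogous sum evaluates (or is bounded) to give the stated $(1+b\pi)^{1/2}$ exactly; this is elementary but requires care with the placement of $a$ and the role of $b$. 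Everything else is a routine application of Fourier inversion, Parseval's identity, and integration by parts on the circle.
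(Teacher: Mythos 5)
The paper does not actually prove Lemma~\ref{apvertimai}: it simply observes that \eqref{Tsaregradskii} and \eqref{non-uniform_2} are trivially true when the right-hand sides are infinite, and then refers the reader to \cite{C16} (Sections~6.1--6.2), \cite{Pre85}, and Lemma~3.3 of \cite{SC15}. Your sketch supplies the actual arguments one would find there, and it is essentially correct: Abel summation identifying $M(k)$ with (a shift of) the Fourier coefficients of $\w M(t)/(\eit-1)$ for \eqref{Tsaregradskii}; taking absolute values inside the inversion integral for \eqref{LAP}; writing $(k-a)\ee^{-\ii(k-a)t}=\ii\,\frac{\DD}{\DD t}\ee^{-\ii(k-a)t}$, integrating by parts, and using $2\pi$-periodicity of $\ee^{-\ii kt}\w M(t)$ to kill the boundary term for \eqref{non-uniform_1}--\eqref{non-uniform_2}; and Cauchy--Schwarz with the weight $1+b^{-2}(k-a)^2$ plus Parseval for \eqref{TVAP}, where the constant $(1+b\pi)^{1/2}$ comes from $\sum_k b^2/(b^2+(k-a)^2)\leqslant 1+\int_{-\infty}^{\infty}b^2/(b^2+x^2)\,\DD x=1+b\pi$. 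So your route is not a different one from the paper's; it is a reconstruction of the standard proof that the paper outsources to references.

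One point in your write-up is imprecise and worth correcting. You assert that integrability of $\w M(t)/(\eit-1)$ near $t=0$ ``holds precisely because $M\in M_{\ZZ}$ is a finite signed measure with $\sum_j M\{j\}$ finite.'' That is not sufficient: finiteness of $M$ always gives $\sum_j M\{j\}$ finite, yet $\w M(t)/(\eit-1)$ is non-integrable at $t=0$ whenever $\w M(0)=\sum_j M\{j\}\neq 0$, and it can fail to be integrable even when $\w M(0)=0$ if $\w M$ is not sufficiently smooth at the origin. The correct dispatch is exactly the paper's one-line remark: when the right-hand side of \eqref{Tsaregradskii} or \eqref{non-uniform_2} is infinite the inequality is vacuous, and when it is finite the Abel-summation/Fourier-coefficient identification goes through. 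With that fix, your proposal is a faithful sketch of the cited proofs.
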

Observe that (\ref{Tsaregradskii}) and (\ref{non-uniform_2}) are
trivial if
integrals on the right-hand side are infinite. All inequalities are
well-known and can be found in \cite{C16} Section 6.1 and Section 6.2; see,
also \cite{Pre85} and Lemma 3.3 in \cite{SC15}.

%%%% CHARACTERISTIC FUNCTION

%\subsection{Characteristic function $\w{F}_n(t)$}

The characteristic function method is used for the analysis of the model.
Therefore our next step is to obtain $\w F_n(t)$.

\begin{lemma}\label{lema_1}
Let condition (\ref{condition}) hold. Then the characteristic function
$\w
F_n(t)$\index{characteristic function} can be expressed in the following way:
\begin{equation}
\label{eq:a1} \w{F}_n(t)=\w{\varLambda}_1^n(t)
\w{W}_1(t) +\w{\varLambda}_2^n(t)
\w{W}_2(t) +\w{\varLambda}_3^n(t)
\w{W}_3(t).
\end{equation}

Here
\begin{eqnarray*}
\w{\varLambda}_{1,2}(t)\!\!\!\!&=&\!\!\!\!\frac{1-\gamma+\beta\eit\pm\sqrt{\w{D}(t)}}{2}, \qquad\w{
\varLambda}_{3}(t)=\deit,
\\
\w{D}(t)\!\!\!\!&=&\!\!\!\!(1-\gamma+\beta\eit)^{2} - 4\eit\bigl(\beta-\gamma(1-
\alpha)\bigr),
\\
\w{W}_{1,2}(t)\!\!\!\!&=&\!\!\!\!\frac{(\ee^{(d+1)\ii t}-1)(\beta-\gamma(1-\alpha
))-(\deit-1)\widehat{\varLambda}_{1,2}(t)}{\pm(\widehat{\varLambda
}_{1,2}(t)-\deit)\sqrt{\widehat{D}(t)}}
\\
&&\!\!\!\!{}+\frac{(\eit-1)[\gamma\widehat{\varLambda}_{1,2}(t)-\beta+\gamma(1-\alpha
)]}{\pm(\widehat{\varLambda}_{1,2}(t)-\deit)\sqrt{\widehat{D}(t)}},
\\
\w{W}_{3}(t)\!\!\!\!&=&\!\!\!\!\frac{\alpha\gamma\deit}{(e^{(d-1)\ii t}-\beta)(\deit
-(1-\gamma))-\gamma(1-\alpha-\beta)}.
\end{eqnarray*}
\end{lemma}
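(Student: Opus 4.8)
The plan is to set up the standard matrix-analytic recursion for the characteristic function and then solve it via the spectral decomposition of a $3\times 3$ transition-type matrix. First I would introduce, for each state $a_j$ and each $n$, the generating quantities
\[
\varphi_j^{(n)}(t) = \sum_{m} \mathrm{E}\bigl[\ee^{\ii t S_n}\mathbf{1}\{\xi_n = a_j\}\bigr],
\]
so that $\w F_n(t) = \varphi_1^{(n)}(t)+\varphi_2^{(n)}(t)+\varphi_3^{(n)}(t)$. Conditioning on $\xi_n$ and using the Markov property together with $f(a_1)=0$, $f(a_2)=1$, $f(a_3)=d$ gives a linear recursion $\Phi^{(n+1)}(t) = \Phi^{(n)}(t)\, Q(t)$, where $\Phi^{(n)}=(\varphi_1^{(n)},\varphi_2^{(n)},\varphi_3^{(n)})$ and $Q(t)$ is obtained from $P$ by multiplying the $j$-th column by $\ee^{\ii t f(a_j)}$, i.e.
\[
Q(t) = \begin{pmatrix} 1-\gamma & \gamma\eit & 0\\ 1-\alpha-\beta & \beta\eit & \alpha\deit\\ 0 & 0 & \deit\end{pmatrix}.
\]
With the initial condition $\Phi^{(0)}(t)=(1,0,0)$ coming from $\pi_1=1$, we get $\w F_n(t) = (1,0,0)\, Q(t)^n\, (1,1,1)^{\mathsf T}$.

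The next step is to diagonalize $Q(t)$. Because the third row is $(0,0,\deit)$, one eigenvalue is immediately $\w\varLambda_3(t) = \deit$, and the other two, $\w\varLambda_{1,2}(t)$, are the roots of the characteristic polynomial of the upper-left $2\times 2$ block
\[
\begin{pmatrix} 1-\gamma & \gamma\eit \\ 1-\alpha-\beta & \beta\eit\end{pmatrix}.
\]
Its trace is $1-\gamma+\beta\eit$ and its determinant is $(1-\gamma)\beta\eit - \gamma\eit(1-\alpha-\beta) = \eit(\beta - \gamma(1-\alpha))$, so the quadratic formula yields exactly $\w\varLambda_{1,2}(t) = \tfrac12\bigl(1-\gamma+\beta\eit \pm \sqrt{\w D(t)}\bigr)$ with $\w D(t)$ as stated. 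Writing $Q(t)^n$ in the eigenbasis then produces $\w F_n(t) = \sum_{k=1}^3 \w\varLambda_k^n(t)\,\w W_k(t)$, where each $\w W_k(t)$ is the product of the left row $(1,0,0)$ with the $k$-th spectral projector applied to $(1,1,1)^{\mathsf T}$ — a rational function of $\eit$, $\deit$, $\w\varLambda_k(t)$ and $\sqrt{\w D(t)}$. The factor $\w\varLambda_{1,2}(t)-\deit$ in the denominators is precisely the value at the other root of the quadratic $(x-\w\varLambda_1)(x-\w\varLambda_2)$ evaluated against the $\deit$ eigendirection, and the $\pm\sqrt{\w D(t)}$ is $\w\varLambda_1-\w\varLambda_2$; these are exactly the quantities appearing in the denominators of $\w W_{1,2}(t)$ and $\w W_3(t)$ in the statement.

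The main obstacle is purely bookkeeping: computing the spectral projectors of $Q(t)$ explicitly and simplifying the resulting rational expressions into the compact closed forms for $\w W_{1,2}(t)$ and $\w W_3(t)$ given in the lemma. This requires identifying the left and right eigenvectors of the $2\times 2$ block (the right eigenvector has the form $(\gamma\eit,\ \w\varLambda_k - (1-\gamma))^{\mathsf T}$, or equivalently $(\w\varLambda_k-\beta\eit,\ 1-\alpha-\beta)^{\mathsf T}$, up to normalization) and the contribution of the $\deit$ block, then combining them so that the denominators factor as $\pm(\w\varLambda_{1,2}-\deit)\sqrt{\w D(t)}$ for $\w W_{1,2}$ and as $(\ee^{(d-1)\ii t}-\beta)(\deit-(1-\gamma))-\gamma(1-\alpha-\beta)$ for $\w W_3$ — the latter being, up to the factor $\deit$, the lower-right cofactor $\det(\deit\,\mathrm{Id} - Q(t))$ restricted appropriately. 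Condition (\ref{condition}) guarantees $\beta<1$ and keeps the relevant denominators nonvanishing on $[-\pi,\pi]$, so all these rational functions are well defined; a short check that $\w F_n(t)$ so obtained satisfies the recursion and the initial condition closes the argument. No genuinely hard estimate is needed here — the work is entirely the linear-algebra identity and algebraic simplification.
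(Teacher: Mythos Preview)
Your proposal is correct and follows essentially the same route as the paper: both set up the recursion $\w F_n(t)=(1,0,0)\,\tilde P(t)^n(1,1,1)^{\mathsf T}$ with the column-weighted transition matrix, read off $\w\varLambda_3(t)=\deit$ and obtain $\w\varLambda_{1,2}(t)$ from the quadratic for the upper-left $2\times2$ block, and then compute $\w W_k(t)$ by finding the right and left eigenvectors and the biorthogonal normalization explicitly. The paper carries out precisely the algebraic simplification you flag as the ``main obstacle,'' using the relation $\w\varLambda_j-(1-\gamma)=\gamma\eit/(\w\varLambda_j-\beta\eit)$ and the identity $(1-\gamma-\beta\eit)^2+4\gamma\eit(1-\alpha-\beta)=\w D(t)$ to massage the projector formulas into the stated closed forms.
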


\begin{proof} The characteristic function $\w F_n(t)$\index{characteristic function} can be written as
follows, see \cite{SC16}:
\begin{equation}
\label{eq:w} \w{F}_n(t) = (\pi_1, \pi_2,
\pi_3) \bigl(\w{\varLambda}_1^n(t)
\vec{y}_{1}\vec {z}^T_{1}+\w{
\varLambda}_2^n(t)\vec{y}_{2}
\vec{z}^T_{2}+\w{\varLambda }_3^n(t)
\vec{y}_{3}\vec{z}^T_{3}\bigr)
(1,1,1)^T.
\end{equation}

Expression (\ref{eq:a1}) is known as Perron's formula. Similar
expression was
used for Markov binomial distribution; see, for example, \cite{CR09}.
$\w{\varLambda}_{j}(t)$ ($j=1,2,3$) are eigenvalues of the following matrix:

\def\arraystretch{1}
\[
\tilde P(t)= \lleft( %
\begin{array}{ccc}
1-\gamma& \gamma\eit& 0 \\
1-\alpha-\beta& \beta\eit& \alpha\deit\\
0& 0& \deit\\
\end{array} %
 \rright).
\]
%
% Eigenvalues

We find the eigenvalues by solving the following equation:
\[
\ab{\tilde{P}(t)-\w{\varLambda}(t)\dirac}=0.
\]
It is not difficult to prove that
\begin{equation}
\label{lygtis} \w{\varLambda}_{1,2}(t)^2-\w{
\varLambda}_{1,2}(t) (1-\gamma+\beta\eit)+\eit \bigl(\beta-\gamma(1-
\alpha)\bigr)=0,
\end{equation}
and
\[
\deit- \w{\varLambda}_3(t)=0.
\]
Hence,
\begin{eqnarray*}
\w{\varLambda}_{1,2}(t)\!\!\!\!&=&\!\!\!\!\frac{1-\gamma+\beta\eit\pm
\w{D}^{1/2}(t)}{2},
\\
\w{D}(t)\!\!\!\!&=&\!\!\!\!(1-\gamma+\beta\eit)^{2} - 4\eit\bigl(\beta-\gamma(1-
\alpha)\bigr),
\\
\w{\varLambda}_{3}(t)\!\!\!\!&=&\!\!\!\!\deit.
\end{eqnarray*}
%Eigenvectors with lambda{1,2}
Eigenvectors $\vec{y}_j$ and $\vec{z}_j$ are obtained by solving the
following system of equations:
\begin{equation}
\label{eq:system} \lleft\{ %
\begin{array}{@{}r@{\ }c@{\ }l}
\tilde{P}(t)\vec{y}_j&=&\w{\varLambda}(t) \vec{y}_j, \\
\vec{z}_j^T\tilde{P}(t)&=&\w{\varLambda}(t) \vec{z}_j^T, \\
\vec{z}_j^T\vec{y}_j&=&1.
\end{array} %
\rright.
\end{equation}
%We will start to solve it with $\w{\varLambda}_{j}(t), j = 1, 2$.
From the first equation of system (\ref{eq:system}) we get that
$y_{j,3}=0$, hence the other two equations are equivalent because of equation
(\ref{lygtis}). Therefore,
\begin{equation}
\label{yT} \vec{y_j}^T= \biggl(y_{j,1},
\frac{1-\alpha-\beta}{\w{\varLambda
}_{j}(t)-\beta\eit}y_{j,1},0 \biggr),\quad j=1,2.
\end{equation}
Similarly, from the second equation of system  (\ref{eq:system}) we get
\begin{equation}
\label{zT} \vec{z_j}^T= \biggl(z_{j,1},
\frac{\w{\varLambda}_{j}(t)-(1-\gamma
)}{1-\alpha-\beta}z_{j,1}, \frac{\alpha\deit(\w{\varLambda}_{j}(t)
-(1-\gamma))}{(\w{\varLambda}_{j}(t)-\deit)(1-\alpha-\beta)}z_{j,1} \biggr),
\quad j=1,2.
\end{equation}
The third equation of system (\ref{eq:system}) can be written as
\begin{eqnarray}
\label{z1y1} \vec{z_j}^T\vec{y_j}\!\!\!\!&=&\!\!\!\!1,
\nonumber
\\
y_{j,1}z_{j,1}+ y_{j,2}z_{j,2}+
y_{j,3}z_{j,3}\!\!\!\!&=&\!\!\!\!1,
\nonumber
\\
y_{j,1}z_{j,1}+\frac{\w{\varLambda}_{j}(t)-(1-\gamma)}{\w{\varLambda
}_{j}(t)-\beta\eit}y_{j,1}z_{j,1}+0\!\!\!\!&=&\!\!\!\!1,
\nonumber
\\
1+\frac{\gamma\eit(1-\alpha-\beta)}{(\w{\varLambda}_{j}(t)-\beta\eit
)^2}\!\!\!\!&=&\!\!\!\!\frac{1}{y_{j,1}z_{j,1}}.
\end{eqnarray}
According to assumption, $(\pi_1,\pi_2,\pi_3)=(1,0,0)$. Substituting
(\ref{yT}), (\ref{zT}), and (\ref{z1y1}) into (\ref{eq:w}), we obtain
\begin{eqnarray*}
\w{W}_{1,2}(t)=(1,0,0)\vec{y_j}\vec{z_j}^T
\lleft( %
\begin{array}{c}
1 \\
1\\
1\\
\end{array} %
 \rright) = \frac{1+\frac{\w{\varLambda}_{j}(t)-(1-\gamma)}{1-\alpha-\beta}
(1+\frac{\alpha\deit}{\w{\varLambda}_{j}(t)-\deit} )}{1+\frac{\gamma
\eit(1-\alpha-\beta)}{(\w{\varLambda}_{j}(t)-\beta\eit)^2}},\quad
j=1,2.
\end{eqnarray*}
From equation (\ref{lygtis}) we get
\[
\frac{\w{\varLambda}_{j}(t)-(1-\gamma)}{1-\alpha-\beta}=\frac{\gamma\eit
}{\w{\varLambda}_{j}(t)-\beta\eit}.
\]
Hence,
\begin{equation}
\label{w12} \w{W}_{1,2}(t)=\frac{1+\frac{\gamma\eit}{\w{\varLambda}_{1,2}(t)-\beta\eit
} (1+\frac{\alpha\deit}{\w{\varLambda}_{1,2}(t)-\deit} )}{1+\frac
{(1-\alpha-\beta)\gamma\eit}{(\w{\varLambda}_{1,2}(t)-\beta\eit)^2}}.
\end{equation}
%Expression of w_1, w_2
Applying equation (\ref{lygtis}), we prove that the numerator of
$\w{W}_{1,2}(t)$ is equal to
\begin{align}
\label{w12_nominator} &\frac{(\ee^{(d+1)\ii t}-1)(\beta-\gamma(1-\alpha))-(\deit-1)\w{\varLambda
}_{1,2}(t)}{(\w{\varLambda}_{1,2}(t)-\beta\eit)(\w{\varLambda}_{1,2}(t)-\deit
)}
\nonumber
\\
&\quad +\frac{(\eit-1)[\gamma\w{\varLambda}_{1,2}(t)-(\beta-\gamma(1-\alpha
))]}{(\w{\varLambda}_{1,2}(t)-\beta\eit)(\w{\varLambda}_{1,2}(t)-\deit)}.
\end{align}
It is easy to check that
\begin{align}
\label{Dt} (1-\gamma-\beta\eit)^2+4\gamma\eit(1-\alpha-\beta)=
\w{D}(t).
\end{align}
Similarly
\begin{align}
\label{s1} &\bigl(\widehat{\varLambda}_{1,2}(t)-
\beta\eit \bigr)^2
\nonumber
\\*
&\quad =\frac{(1-\gamma-\beta\eit)^2\pm2(1-\gamma-\beta\eit)\sqrt{\widehat
{D}(t)}+\widehat{D}(t)}{4}.
\end{align}
Using (\ref{Dt}) and (\ref{s1}), we obtain
\begin{align}
\label{s2} &\bigl(\widehat{\varLambda}_{1,2}(t)-
\beta\eit \bigr)^2+(1-\alpha-\beta)\gamma\eit
\nonumber
\\
&\quad =\frac{\sqrt{\widehat{D}(t)} (\sqrt{\widehat{D}(t)}\pm(1-\gamma
-\beta\eit) )}{2}.
\end{align}
Notice that
\[
2\bigl(\widehat{\varLambda}_{1,2}(t)-\beta\eit\bigr)=1-\gamma-\beta\eit
\pm\sqrt{\w{D}(t)}.
\]
Substituting (\ref{w12_nominator}), (\ref{s1}), and (\ref{s2}) into
(\ref{w12}), we complete the proof for $\w{\varLambda}_{1,2}$ and
$\widehat{W}_{1,2}(t)$.

%Eigenvectors with lambda{3}
Similarly, system (\ref{eq:system}) is solved with $\w{\varLambda}_{3}(t) =
\deit$. We get
\begin{gather}
\label{yT3} \vec{y_3}^T= \biggl(y_{3,1},
\frac{\deit-(1-\gamma)}{\gamma\eit
}y_{3,1}, \frac{(\deit-\beta\eit)y_{3,2}-(1-\alpha-\beta
)y_{3,1}}{\alpha\deit}y_{3,1} \biggr),\\
\label{zT3} \vec{z_3}^T=(0,0, z_{3,3}).
\end{gather}
Hence,
\begin{eqnarray}
\label{y1z3} \frac{1}{y_{3,1}z_{3,3}}=\frac{(e^{(d-1)it}-\beta)(\deit-(1-\gamma
))-\gamma(1-\alpha-\beta)}{\alpha\gamma\deit}.
\end{eqnarray}
%W{3}
Substituting (\ref{yT3}), (\ref{zT3}), and (\ref{y1z3}) into (\ref{eq:w}),
we get
\begin{align*}
\w{W}_{3}(t)&=(1,0,0)\vec{y_3}\vec{z_3}^T
\lleft( %
\begin{array}{c}
1 \\
1\\
1\\
\end{array} %
 \rright)=y_{3,1}z_{3,3}
\\
&=\frac{\alpha\gamma\deit}{(e^{(d-1)it}-\beta)(\deit-(1-\gamma))-\gamma
(1-\alpha-\beta)}.\qedhere
\end{align*}
\end{proof}

%%%%%%%%%%%%%%%%%%%%%%%%%%%%%%%%%%%
%%%%%%%%%%%%
% W3
%\subsection{ $\w{\varLambda}_3^n(t)\w{W}_3$ analysis} \label{E_tyrimas}
It is not difficult to notice that $\ab{\w{W}_3(t)}$ is equal to 1 at some
points, for example, $\w{W}_3(0)=1$, since
\begin{eqnarray*}
\w{W}_3(0)\!\!\!\!&=&\!\!\!\!\frac{\alpha\gamma}{(1-\beta)(1-(1-\gamma))-\gamma(1-\alpha
-\beta)}=\frac{\alpha\gamma}{\alpha\gamma}=1.
\end{eqnarray*}
Therefore, one cannot expect that $\w{\varLambda}_3^n(t)\w{W}_3$ be small.
Therefore we concentrate our research on possible asymptotic behavior of
other components of $\w F_n(t)$. We begin from a short expansion of
$\sqrt{\w{D}(t)}$.

%%%%%%%%%%%%%%%%%%%%%%%
%%%%%%%%%%%%%%%%%%%%%%%
%%%%%%%%%%%%%%%%%%%%%%%
%%%%%%%%%%%%%%%%%%%%%%%
%\subsection{Short $\sqrt{\w{D}(t)}$ expansion}
%sqrt{D(t)} expression %%%%%%%%%%%
%%%%%%%%%%%%%%%%%%%%%%
Observe that $\w{D}(t)$ can be written in the following way:
\begin{eqnarray}
\label{D} \w{D}(t)=(1+\gamma-\beta\eit)^2 \biggl(1+
\frac{4\gamma((1-\alpha)\eit
-1)}{(1+\gamma-\beta\eit)^2} \biggr).
\end{eqnarray}

\begin{lemma}\label{sqrtD_short}
Let condition (\ref{condition}) hold, $\ab{ t } \leqslant\pi$. Then
\[
\sqrt{\w{D}(t)}= 1+\gamma-\beta\eit+ 5.81\theta\gamma.
\]
\end{lemma}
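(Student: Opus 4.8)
The plan is to start from the factorisation~(\ref{D}). Writing $w(t):=1+\gamma-\beta\eit$ and $z(t):=4\gamma((1-\alpha)\eit-1)/(1+\gamma-\beta\eit)^{2}$, identity~(\ref{D}) says $\w{D}(t)=w(t)^{2}(1+z(t))$. Once $z(t)$ is seen to be small, the branch of $\sqrt{\w{D}(t)}$ implicit in Lemma~\ref{lema_1} (the one continuous in $t$ and positive at $t=0$, as forced by continuity of the eigenvalues) equals $w(t)\sqrt{1+z(t)}$ with the principal square root, and the assertion reduces to showing $\ab{\sqrt{\w{D}(t)}-w(t)}=\ab{w(t)}\ab{\sqrt{1+z(t)}-1}\le 5.81\gamma$.

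First I would bound $z(t)$. By the triangle inequality and~(\ref{condition}), $\ab{w(t)}\ge 1+\gamma-\beta\ge 1-\beta\ge 0.85$, while $\ab{(1-\alpha)\eit-1}\le (1-\alpha)+1\le 2$. Hence $\ab{z(t)}\le 8\gamma/(1-\beta)^{2}\le 8\cdot 0.05/0.85^{2}<0.554$, so $1+z(t)$ lies in the open disc of radius $<1$ about $1$; in particular $1+z(t)\notin(-\infty,0]$, the principal root is well defined, and from $\mathrm{Re}\,\sqrt{\zeta}=\sqrt{(\ab{\zeta}+\mathrm{Re}\,\zeta)/2}$ with $\zeta=1+z(t)$ and $\ab{\zeta}\ge\mathrm{Re}\,\zeta=1+\mathrm{Re}\,z(t)\ge 1-\ab{z(t)}$ one gets $\mathrm{Re}\,\sqrt{1+z(t)}\ge\sqrt{1-\ab{z(t)}}>\sqrt{0.446}>0.667$.

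The key step is then the elementary identity $\sqrt{1+z}-1=z/(1+\sqrt{1+z})$ together with $\ab{1+\sqrt{1+z}}\ge 1+\mathrm{Re}\,\sqrt{1+z}$: this gives $\ab{1+\sqrt{1+z(t)}}\ge 1+\sqrt{1-\ab{z(t)}}>1.667$, hence $\ab{\sqrt{1+z(t)}-1}\le\ab{z(t)}/1.667$. Substituting back, $\ab{\sqrt{\w{D}(t)}-w(t)}=\ab{w(t)}\ab{\sqrt{1+z(t)}-1}\le\ab{w(t)}\ab{z(t)}/1.667=4\gamma\ab{(1-\alpha)\eit-1}/(1.667\,\ab{w(t)})\le 8\gamma/(1.667\cdot 0.85)<5.65\gamma\le 5.81\gamma$, which yields $\sqrt{\w{D}(t)}=1+\gamma-\beta\eit+5.81\theta\gamma$ with $\ab{\theta}\le 1$.

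The only genuine obstacle is that the naive bound $\ab{\sqrt{1+z}-1}\le\ab{z}$ is too crude (it would only give the constant $8/0.85\approx 9.4$); one has to exploit that $1+z(t)$ stays \emph{near} $1$, not merely close to the unit circle, so that $\mathrm{Re}\,\sqrt{1+z(t)}$ is bounded below by about $\sqrt{0.446}\approx 0.67$ and $\ab{1+\sqrt{1+z(t)}}$ by about $1.67$. Everything else is routine arithmetic with the numerical constraints in~(\ref{condition}); I would also state explicitly that $\sqrt{\w{D}(t)}$ means the branch positive at $t=0$ (equivalently $w(t)\sqrt{1+z(t)}$), so that the $w(t)$-term carries the correct sign. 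A variant of the third step, plugging $\ab{\sqrt{\w{D}(t)}-w(t)}$ back into the identity $\ab{\sqrt{\w{D}(t)}-w(t)}=4\gamma\ab{(1-\alpha)\eit-1}/\ab{\sqrt{\w{D}(t)}+w(t)}$ and solving the resulting quadratic inequality, gives the slightly better constant $\approx 5.2$, but the factorisation route above is cleaner.
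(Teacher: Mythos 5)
The proposal is correct but takes a genuinely different route from the paper. The paper's proof expands $\sqrt{\w{D}(t)}=(1+\gamma-\beta\eit)\sum_{j\geqslant 0}\binom{1/2}{j}z(t)^{j}$ with $z(t)=4\gamma((1-\alpha)\eit-1)/(1+\gamma-\beta\eit)^{2}$ via the binomial series, isolates the $j=1$ term $2\gamma((1-\alpha)\eit-1)/(1+\gamma-\beta\eit)$ (bounded by $4\gamma/0.85\approx 4.71\gamma$), and then bounds the $j\geqslant 2$ tail by a geometric-series estimate ($\approx 1.1\gamma$), adding up to $5.81\gamma$. You instead avoid the infinite series altogether: using the algebraic identity $\sqrt{1+z}-1=z/(1+\sqrt{1+z})$ together with the exact formula $\mathrm{Re}\,\sqrt{\zeta}=\sqrt{(\ab{\zeta}+\mathrm{Re}\,\zeta)/2}$ to bound $\ab{1+\sqrt{1+z(t)}}$ from below by $1+\sqrt{1-\ab{z(t)}}>1.667$. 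The branch identification (principal root of $1+z(t)$, positive at $t=0$, continuous since $1+z(t)$ avoids $(-\infty,0]$) is stated explicitly, which the paper leaves implicit. Your argument is cleaner — no binomial coefficients, no tail estimate — and in fact gives the slightly sharper constant $\approx 5.65\gamma$, comfortably inside the claimed $5.81\gamma$. Both approaches rely on the same two numerical facts from (\ref{condition}), namely $\ab{1+\gamma-\beta\eit}\geqslant 1-\beta\geqslant 0.85$ and $\gamma\leqslant 0.05$, so the proofs are logically parallel even though the manipulations differ.
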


\begin{proof}
$\sqrt{\w{D}(t)}$ can be expanded and written as
\begin{eqnarray}
\label{sqrtD} \sqrt{\w{D}(t)} \!\!\!\!&=&\!\!\!\! (1+\gamma-\beta\eit)\sum
_{j=0}^{\infty}\binom
{1/2}{j} \biggl(\frac{4\gamma((1-\alpha)\eit-1)}{(1+\gamma-\beta\eit
)^2}
\biggr)^j
\nonumber
\\
\!\!\!\!&=&\!\!\!\!(1+\gamma-\beta\eit)+\frac{2\gamma((1-\alpha)\eit-1)}{1+\gamma-\beta
\eit}
\nonumber
\\
&&\!\!\!\!{}+\frac{16\gamma^2((1-\alpha)\eit-1)^2}{(1+\gamma-\beta\eit)^3}\sum_{j=2}^{\infty}
\binom{1/2}{j} \biggl(\frac{4\gamma((1-\alpha)\eit
-1)}{(1+\gamma-\beta\eit)^2} \biggr)^{j-2}
\nonumber
\\
\!\!\!\!&=&\!\!\!\!(1+\gamma-\beta\eit)+\frac{2\gamma((1-\alpha)\eit-1)}{1+\gamma-\beta
\eit}
\nonumber
\\
&&\!\!\!\!{}+\frac{2\theta\gamma^2\ab{(1-\alpha)\eit-1}^2}{\ab{1+\gamma-\beta\eit
}^3}\sum_{j=0}^{\infty}\AB{
\frac{4\gamma((1-\alpha)\eit-1)}{(1+\gamma
-\beta\eit)^2}}^j.
\end{eqnarray}
Observe that
\begin{gather*}
\AB{\frac{4\gamma((1-\alpha)\eit-1)}{(1+\gamma-\beta\eit)^2}} \leqslant\frac{8\cdot0.05}{(0.85+0.05)^2}\leqslant0.5,\\
\frac{\theta\gamma^2\ab{(1-\alpha)\eit-1}^2}{\ab{1+\gamma-\beta\eit
}^3}\sum_{j=0}^{\infty}\AB{
\frac{4\gamma((1-\alpha)\eit-1)}{(1+\gamma
-\beta\eit)^2}}^j\leqslant0.55\theta\gamma.
\end{gather*}
Therefore
\begin{align*}
\sqrt{\w{D}(t)} &=1+\gamma-\beta\eit+\frac{4\theta\gamma}{0.85} + 2\cdot0.55\theta
\gamma
\\
&= 1+\gamma-\beta\eit+ 5.81\theta\gamma.\qedhere
\end{align*}
\end{proof}

Next we prove that $\w\varLambda_2(t)$ is always small.
%\subsection{$\w{\varLambda}_2(t)$ estimate}
%Lemma about Lambda_2
%
\begin{lemma}\label{lambda_2}
Let condition (\ref{condition}) hold, $\ab{ t } \leqslant\pi$. Then
\[
\ab{\w{\varLambda}_{2}(t)} \leqslant\beta+4\gamma.
\]
\end{lemma}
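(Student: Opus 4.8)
The plan is to combine the explicit formula for $\widehat{\varLambda}_2(t)$ from Lemma~\ref{lema_1} with the short expansion of $\sqrt{\w D(t)}$ from Lemma~\ref{sqrtD_short}, and then apply the triangle inequality. Concretely, recall that
\[
\widehat{\varLambda}_2(t)=\frac{1-\gamma+\beta\eit-\sqrt{\w D(t)}}{2},
\]
so the whole point is that the ``large'' part $1+\gamma-\beta\eit$ of $\sqrt{\w D(t)}$ should essentially cancel against $1-\gamma+\beta\eit$, leaving only quantities of size $O(\beta+\gamma)$.

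First I would substitute $\sqrt{\w D(t)}=1+\gamma-\beta\eit+5.81\theta\gamma$ (Lemma~\ref{sqrtD_short}) into the numerator. This gives
\[
1-\gamma+\beta\eit-\sqrt{\w D(t)}=(1-\gamma+\beta\eit)-(1+\gamma-\beta\eit)-5.81\theta\gamma=-2\gamma+2\beta\eit-5.81\theta\gamma,
\]
so that $\widehat{\varLambda}_2(t)=-\gamma+\beta\eit-2.905\theta\gamma$. Here one just has to be careful that it is the \emph{minus} square root that enters $\widehat{\varLambda}_2$, which is exactly why the two ``$1$'' terms and the two ``$\gamma-\beta\eit$'' terms reinforce rather than cancel into something awkward; this bookkeeping is the only place an error could creep in.

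Finally I would bound termwise: $\ab{\eit}=1$ and $\ab{\theta}\leqslant1$ give
\[
\ab{\widehat{\varLambda}_2(t)}\leqslant\gamma+\beta+2.905\gamma=\beta+3.905\gamma\leqslant\beta+4\gamma,
\]
which is the claim. There is no real obstacle here — the estimate is a direct consequence of Lemma~\ref{sqrtD_short}; the mild ``hard part'' is simply making sure the constant $5.81$ is tracked correctly through the halving so that $2.905+1<4$, which it comfortably does, leaving a small slack.
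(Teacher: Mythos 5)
Your proof is correct and takes exactly the same route as the paper: substitute the short expansion of $\sqrt{\w D(t)}$ from Lemma~\ref{sqrtD_short} into the formula $\w{\varLambda}_2(t)=\frac{1}{2}\bigl(1-\gamma+\beta\eit-\sqrt{\w D(t)}\bigr)$, simplify, and bound by the triangle inequality. The paper carries this out in a single display; you merely spell out the intermediate arithmetic more explicitly.
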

\begin{proof} From Lemma \ref{sqrtD_short} we get
\begin{align*}
\ab{ \w{\varLambda}_{2}(t) } &= \AB{\frac{1-\gamma+\beta\eit-\sqrt{\w
{D}(t)}}{2}}
\\
&= \frac{1}{2} \AB{ 1-\gamma+\beta\eit- (1+\gamma-\beta\eit+5.81\theta
\gamma)} \leqslant\beta+4\gamma.\qedhere
\end{align*}
\end{proof}

\begin{cor}\label{lambda_2_corr}
Let condition (\ref{condition}) hold, $\ab{t}\leqslant\pi$. Then
\[
\ab{\w{\varLambda}_{2}(t)} \leqslant0.35.
\]
\end{cor}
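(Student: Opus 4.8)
The plan is to obtain this bound immediately from Lemma \ref{lambda_2} together with the numerical constraints in condition (\ref{condition}). Lemma \ref{lambda_2} already establishes, for all $|t|\leqslant\pi$, the estimate $\ab{\w{\varLambda}_2(t)}\leqslant\beta+4\gamma$, so the only thing left is to bound the right-hand side by $0.35$ using the assumptions on $\beta$ and $\gamma$.

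First I would recall that condition (\ref{condition}) gives $0<\beta\leqslant0.15$ and $0<\gamma\leqslant0.05$. Substituting these into the bound from Lemma \ref{lambda_2} yields
\[
\ab{\w{\varLambda}_2(t)}\leqslant\beta+4\gamma\leqslant0.15+4\cdot0.05=0.35,
\]
which is exactly the claim. No case analysis in $t$ is needed, since the estimate of Lemma \ref{lambda_2} is already uniform in $t\in[-\pi,\pi]$.

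There is essentially no obstacle here: the corollary is a purely numerical consequence of the preceding lemma, recorded separately only because the explicit constant $0.35$ (rather than the parameter-dependent bound $\beta+4\gamma$) is the form that will be convenient later, e.g.\ for controlling geometric-type series in $\w{\varLambda}_2(t)$ and for absorbing error terms of the form $(\beta+4\gamma)^n$ that appear throughout the main theorems. If anything deserves a remark, it is only that the inequality $\beta+4\gamma\leqslant0.35$ is strict in the interior of the parameter range, so in fact $\ab{\w{\varLambda}_2(t)}<0.35$ unless $\beta$ and $\gamma$ attain their extreme admissible values simultaneously; but for the purposes of the paper the stated bound $\leqslant0.35$ suffices.
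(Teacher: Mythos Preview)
Your proof is correct and follows exactly the paper's approach: the corollary is stated without separate proof in the paper, as it is an immediate numerical consequence of Lemma~\ref{lambda_2} and the bounds $\beta\leqslant0.15$, $\gamma\leqslant0.05$ from condition~(\ref{condition}).
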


%%%%%%%%%%%%%%%%%%%%%%%%%%%%%%%%%%%%%%%%%%%%%%%%%%%%%%%%%%%%%%%%%%%%%%%%%%%%%%%%%%%%%%
%\subsection{$\w{\varLambda}_1(t)$ estimate}

The following estimate shows that $\varLambda_1$ behaves similarly to the
compound Poisson distribution.\index{compound Poisson distribution}
%Lemma about Lambda_1
%
\begin{lemma}\label{lambda_1}
Let condition (\ref{condition}) hold, $\ab{t} \leqslant\pi$. Then
\begin{eqnarray*}
\ab{\w{\varLambda}_{1}(t) }\!\!\!\!&\leqslant&\!\!\!\!1+0.4(1-\alpha)\gamma Re\bigl(
\w {H}(t)-1\bigr)-0.2\alpha\gamma
\\
\!\!\!\!&\leqslant&\!\!\!\!\exp\bigl\{0.4(1-\alpha)\gamma Re\bigl(\w{H}(t)-1\bigr)-0.2\alpha
\gamma\bigr\}.
\end{eqnarray*}
\end{lemma}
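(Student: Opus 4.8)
The plan is to expand $\w{\varLambda}_1(t)$ about $1$ by means of the series for $\sqrt{\w{D}(t)}$ already obtained in the proof of Lemma~\ref{sqrtD_short}, and then to estimate the modulus of the resulting expression. First I would set
\[
w=w(t):=\frac{\gamma\bigl((1-\alpha)\eit-1\bigr)}{1+\gamma-\beta\eit}
\]
and substitute expansion~(\ref{sqrtD}) into $\w{\varLambda}_1(t)=\tfrac12\bigl(1-\gamma+\beta\eit+\sqrt{\w{D}(t)}\,\bigr)$. Because $(1-\gamma+\beta\eit)+(1+\gamma-\beta\eit)=2$, the first two terms of~(\ref{sqrtD}) contribute exactly $1+w(t)$, so $\w{\varLambda}_1(t)=1+w(t)+\tfrac12 R(t)$, where the last line of~(\ref{sqrtD}) — in which the geometric sum is at most $2$ — yields
\[
\Ab{\tfrac12 R(t)}\le\frac{2\gamma^2\,|(1-\alpha)\eit-1|^2}{|1+\gamma-\beta\eit|^3}=\frac{2\,|w(t)|^2}{|1+\gamma-\beta\eit|}\le\frac{2}{1-\beta}\,|w(t)|^2 .
\]

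Next I would combine the triangle inequality with the elementary bound $|1+w|\le 1+Re\,w+\tfrac12|w|^2$ (immediate from $|1+w|^2=1+2\,Re\,w+|w|^2$ and $\sqrt{1+u}\le 1+u/2$) to obtain
\[
\ab{\w{\varLambda}_1(t)}\le 1+Re\,w(t)+c_0\,|w(t)|^2 ,\qquad c_0:=\tfrac12+\frac{2}{1-\beta}\le 2.86 .
\]
A direct evaluation of real parts (using $|(1-\alpha)\eit-1|^2=2(1-\alpha)(1-\cos t)+\alpha^2$) gives $|w(t)|^2=\gamma^2\bigl(2(1-\alpha)(1-\cos t)+\alpha^2\bigr)/|1+\gamma-\beta\eit|^2$ and
\[
Re\,w(t)=\gamma\,\frac{\bigl((1-\alpha)(1+\gamma)+\beta\bigr)(\cos t-1)-\alpha(1+\gamma-\beta)}{|1+\gamma-\beta\eit|^2},\qquad Re\bigl(\w{H}(t)-1\bigr)=\frac{(1+\beta)(\cos t-1)}{|1-\beta\eit|^2}.
\]
Since $\cos t-1\le 0$, the summand $\beta(\cos t-1)$ in the numerator of $Re\,w(t)$ may be dropped; and since $Re(\w{H}(t)-1)\le 0$ while $\alpha\gamma\ge 0$, it is enough to prove the bound of the lemma with the reduced coefficients $0.4$ and $0.2$ in place of the true leading ones.

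It then remains to verify a purely numerical inequality. Writing $s:=1-\cos t\in[0,2]$ and $P:=|1-\beta\eit|^2=(1-\beta)^2+2\beta s$, $Q:=|1+\gamma-\beta\eit|^2=(1+\gamma-\beta)^2+2\beta(1+\gamma)s$ (both affine in $s$), the three displays above show that $\ab{\w{\varLambda}_1(t)}\le 1+0.4(1-\alpha)\gamma\,Re(\w{H}(t)-1)-0.2\alpha\gamma$ will follow once
\[
(1-\alpha)s\Bigl(\frac{0.4(1+\beta)}{P}-\frac{1+\gamma-2c_0\gamma}{Q}\Bigr)+\alpha\Bigl(0.2-\frac{1+\gamma-\beta-c_0\gamma\alpha}{Q}\Bigr)\le 0
\]
for all $s\in[0,2]$, $0<\alpha<1$, $0<\beta\le 0.15$, $0<\gamma\le 0.05$. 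After clearing denominators each bracketed factor has affine numerator in $s$, so its nonpositivity on $[0,2]$ is equivalent to nonpositivity at $s=0$ and $s=2$; under condition~(\ref{condition}) this reduces to a few elementary estimates (for example $0.4(1+\beta)(1+\gamma-\beta)^2\le(1+\gamma-2c_0\gamma)(1-\beta)^2$ and $0.2(1+\gamma+\beta)^2\le 1+\gamma-\beta-c_0\gamma$), all of which hold with room to spare. A final application of $1+x\le\ee^x$ gives the second inequality of the lemma from the first. I expect this concluding numerical bookkeeping to be the only delicate part: one must ensure that the $|w|^2$-error, of size $C\gamma^2\bigl((1-\alpha)(1-\cos t)+\alpha^2\bigr)$, is — owing to $\gamma\le 0.05$ and $\beta\le 0.15$ — small enough to be absorbed into the negative main-order term while leaving the constants $0.4$ and $0.2$ intact.
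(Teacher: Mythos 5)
Your argument is correct and reaches the bound by a route whose technical execution differs from the paper's, although both start from the square-root expansion~(\ref{sqrtD}). The paper first applies the algebraic identity~(\ref{trupmenos}) so that the leading increment of $\w{\varLambda}_1$ is rewritten as $\nu(\w{\varPsi}(t)-1)$ with $\nu=\gamma(1-\beta)/(1+\gamma-\beta)$, then invokes the sharper parametrized inequality $\ab{1+\nu(\w{\varPsi}(t)-1)}\leqslant 1+\nu(1-\nu)(Re\,\w{\varPsi}(t)-1)$ (which exploits $\ab{\w{\varPsi}(t)}\leqslant 1$), and finally converts $Re\,\w{\varPsi}(t)-1$ into $(1-\alpha)Re(\w{H}(t)-1)$ minus an $\alpha$-term. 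You instead keep the raw increment $w(t)=\gamma\bigl((1-\alpha)\eit-1\bigr)/(1+\gamma-\beta\eit)$ as it comes directly out of~(\ref{sqrtD}), apply the cruder but universally valid $\ab{1+w}\leqslant 1+Re\,w+\tfrac12\ab{w}^2$, fold the square-root remainder into an explicit quadratic coefficient $c_0=\tfrac12+2/(1-\beta)\leqslant 2.86$, and reduce everything to an inequality affine in $s=1-\cos t$ checked at $s=0,2$. This avoids the $\w{\varPsi}$-rewriting and the $\nu(1-\nu)$-trick at the price of a larger error coefficient; with $\gamma\leqslant 0.05$, $\beta\leqslant 0.15$ the slack is comfortable and your sample verifications (e.g.\ $0.4(1+\beta)(1+\gamma-\beta)^2\leqslant(1+\gamma-2c_0\gamma)(1-\beta)^2$ and $0.2(1+\gamma+\beta)^2\leqslant 1+\gamma-\beta-c_0\gamma$) do close the argument. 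The only step worth making fully explicit in a final write-up is the uniformity in $\alpha$ of the second bracket, obtained by replacing $-c_0\gamma\alpha$ with the smaller $-c_0\gamma$ — which your displayed check implicitly does — together with the companion endpoint and bracket checks at $s=0$ and $s=2$.
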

\begin{proof}

It is not difficult to check that
\begin{equation}
\label{trupmenos} \frac{1}{1+\gamma-\beta\eit}=\frac{1-\beta}{1+\gamma-\beta}\frac
{1}{1-\beta\eit}-
\frac{\beta\gamma}{1+\gamma-\beta\eit}\frac{\eit
-1}{1-\beta\eit}\frac{1}{1+\gamma-\beta}.
\end{equation}
From (\ref{sqrtD}) and (\ref{trupmenos}) it follows that
\begin{eqnarray}
\label{l1} \ab{\w{\varLambda}_{1}(t)}\!\!\!\!&=&\!\!\!\!\AB{\frac{1-\gamma+\beta\eit+\sqrt{\w
{D}(t)}}{2}}
\nonumber
\\
\!\!\!\!&\leqslant&\!\!\!\!\AB{1+\frac{\gamma(1-\beta)}{1+\gamma-\beta}\bigl(\w{\varPsi }(t)-1\bigr)}+
\frac{\beta\gamma^2}{(1+\gamma-\beta)^2}\ab{\w{\varPsi}(t)-1}\ab {\eit-1}
\nonumber
\\
&&\!\!\!\!{}+2\gamma^2\ab{\w{\varPsi}(t)-1}^2\frac{(1+\beta)^2}{(1+\gamma-\beta)^3}.
\end{eqnarray}
Notice that
\begin{gather}
\ab{\w{\varPsi}(t)}^2 =\bigl(Re\w{\varPsi}(t)\bigr)^2+
\bigl(Im\w{\varPsi}(t)\bigr)^2 \leqslant \biggl(1-
\frac{\alpha}{1-\beta} \biggr)^2\leqslant1,\nonumber\\
\label{psi_1} \ab{\w{\varPsi}(t)-1}^2 \leqslant2\bigl(1-Re\w{
\varPsi}(t)\bigr)-\frac{\alpha
}{1-\beta} \biggl(2-\frac{\alpha}{1-\beta} \biggr).
\end{gather}
For all $0\leqslant\nu\leqslant1$, we have
\begin{eqnarray}
\label{nu_psi} \ab{1+\nu(\w{\varPsi}(t)-1)} \!\!\!\!&=&\!\!\!\! \sqrt{(1-\nu)+\nu Re
\w{\varPsi}(t) + \ii \nu Im\w{\varPsi}(t)}
\nonumber
\\
\!\!\!\!&\leqslant&\!\!\!\!1+\nu(1-\nu) \bigl(Re\w{\varPsi}(t)-1\bigr).
\end{eqnarray}

Let
\[
\nu= \frac{\gamma(1-\beta)}{1+\gamma-\beta}.
\]
Substituting
(\ref{psi_1}) into (\ref{l1}) and applying inequality (\ref{nu_psi}),
we get
\begin{eqnarray*}
\ab{\w{\varLambda}_{1}(t)}\!\!\!\!&\leqslant&\!\!\!\! 1+\nu(1-\nu) \bigl(Re\w{
\varPsi}(t)-1\bigr)+\frac
{\beta\gamma^2}{(1+\gamma-\beta)^2}\ab{\w{\varPsi}(t)-1}\ab{\eit -1}
\nonumber
\\
&&\!\!\!\!{}+\frac{4\gamma^2(1+\beta)^2}{(1+\gamma-\beta)^3}\bigl(1-Re\w{\varPsi }(t)\bigr)-\frac{2\gamma^2\alpha}{1-\beta}
\frac{(1+\beta)^2}{(1+\gamma-\beta
)^3} \biggl(2-\frac{\alpha}{1-\beta} \biggr).
\end{eqnarray*}
$\ab{\w{\varPsi}(t)-1}$ can be estimated as
\[
\ab{\w{\varPsi}(t)-1} \leqslant\frac{2}{1-\beta},
\]
and $\ab{\eit-1}$ can be estimated as
\[
\ab{\eit-1}\leqslant\frac{\ab{(1-\alpha)\eit-1}}{\ab{1-\beta\eit}}\ab {1-\beta\eit}+\alpha \leqslant\ab{\w{
\varPsi}(t)-1}(1+\beta)+\alpha.
\]
Then
\begin{eqnarray*}
\label{su_psi} \ab{\w{\varLambda}_{1}(t)} \!\!\!\!&\leqslant&\!\!\!\! 1 + \bigl(Re
\w{\varPsi}(t)-1\bigr)\frac{\gamma}{1+\gamma-\beta} \biggl((1-\beta) \biggl(1-
\frac{\gamma(1-\beta)}{1+\gamma-\beta} \biggr)
\nonumber
\\
&&\!\!\!\!{}-\frac{2\gamma\beta(1+\beta)}{1+\gamma-\beta}-\frac{4\gamma(1+\beta
)^2}{(1+\gamma-\beta)^2} \biggr)
\nonumber
\\
&&\!\!\!\!{}+\frac{2\alpha\gamma^2}{(1-\beta)(1+\gamma-\beta)} \biggl(\frac{\beta
}{1+\gamma-\beta}-\frac{(1+\beta)^2}{(1+\gamma-\beta)^2} \biggl(2-
\frac
{\alpha}{1-\beta} \biggr) \biggr).
\end{eqnarray*}

Notice that
\[
Re\w{\varPsi}(t)-1=(1-\alpha)Re\bigl(\w{H}(t)-1\bigr)-\frac{\alpha-\alpha\beta\cos
(t)}{\ab{1-\beta\eit}^2}.
\]

Finally,
\begin{align}
\ab{\w{\varLambda}_{1}(t)}&\leqslant 1+Re\bigl(\w{H}(t)-1\bigr)
\frac{(1-\alpha
)\gamma}{1+\gamma-\beta} \biggl((1-\beta) \biggl(1-\frac{\gamma(1-\beta
)}{1+\gamma-\beta} \biggr)
\nonumber
\\
&\quad {}-\frac{2\gamma\beta(1+\beta)}{1+\gamma-\beta}-\frac{4\gamma(1+\beta
)^2}{(1+\gamma-\beta)^2} \biggr)
\nonumber
\\
&\quad {}-\frac{\alpha\gamma}{1+\gamma-\beta} \biggl[\frac{1-\beta\cos(t)}{\ab
{1-\beta\eit}^2} \biggl((1-\beta) \biggl(1-
\frac{\gamma(1-\beta)}{1+\gamma
-\beta} \biggr)
\nonumber
\\
&\quad {}-\frac{2\gamma\beta(1+\beta)}{1+\gamma-\beta}-\frac{4\gamma(1+\beta
)^2}{(1+\gamma-\beta)^2} \biggr)
\nonumber
\\
&\quad {}-\frac{2\gamma}{1-\beta} \biggl(\frac{\beta}{1+\gamma-\beta}-\frac
{(1+\beta)^2}{(1+\gamma-\beta)^2} \biggl(2-
\frac{\alpha}{1-\beta} \biggr) \biggr) \biggr]
\nonumber
\\
&\leqslant1+0.4(1-\alpha)\gamma Re\bigl(\w{H}(t)-1\bigr)-0.2\alpha\gamma
\nonumber
\\
&\leqslant\exp\bigl\{0.4(1-\alpha)\gamma Re\bigl(\w{H}(t)-1\bigr)-0.2\alpha
\gamma\bigr\}.
\end{align}
\end{proof}

%Corollary: lambda_1
%
\begin{cor}\label{lambda_1_corr}
Let condition (\ref{condition}) hold, $\ab{ t } \leqslant\pi$. Then
\[
\ab{\w{\varLambda}_1(t)}\leqslant1+C\gamma\bigl(Re\w{H}(t)-1-
\alpha\bigr) \leqslant\exp\bigl\{C\gamma\bigl(Re\w{H}(t)-1-\alpha\bigr)\bigr\}.
\]
\end{cor}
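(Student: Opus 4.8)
The plan is to read this off directly from Lemma~\ref{lambda_1}, absorbing the numerical coefficients into a single constant. The one input I would record first is that $H$ is a probability measure: the computation preceding Table~\ref{tab1} gives $H\{k\}=(1-\beta)\beta^{k-1}$ for $k\geqslant1$ and $H\{k\}=0$ otherwise, so $H$ is the geometric distribution on $\{1,2,\dots\}$, whence $\norm{H}=1$ and $\ab{\w{H}(t)}\leqslant1$ for every $t$. In particular $Re\bigl(\w{H}(t)-1\bigr)\leqslant0$ on $[-\pi,\pi]$, which is the only property of $\w H$ that the argument uses.

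Next I would fix any constant $C$ with $0<C\leqslant\min\{0.2,\,0.4(1-C_0)\}$ and estimate, starting from the bound $\ab{\w{\varLambda}_1(t)}\leqslant 1+0.4(1-\alpha)\gamma\, Re\bigl(\w{H}(t)-1\bigr)-0.2\alpha\gamma$ of Lemma~\ref{lambda_1}. Since condition (\ref{condition}) gives $\alpha\leqslant C_0<1$, we have $0.4(1-\alpha)\geqslant 0.4(1-C_0)\geqslant C>0$; as $\gamma\, Re\bigl(\w{H}(t)-1\bigr)\leqslant0$, replacing the coefficient $0.4(1-\alpha)$ by the smaller one $C$ only enlarges this non-positive term, so $0.4(1-\alpha)\gamma\, Re\bigl(\w{H}(t)-1\bigr)\leqslant C\gamma\, Re\bigl(\w{H}(t)-1\bigr)$. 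Similarly $-0.2\alpha\gamma\leqslant-C\alpha\gamma$ because $C\leqslant0.2$ and $\alpha\gamma>0$. Adding the two estimates yields $\ab{\w{\varLambda}_1(t)}\leqslant 1+C\gamma\bigl(Re\,\w{H}(t)-1-\alpha\bigr)$, the first inequality of the corollary; the second is the universal inequality $1+x\leqslant\ee^{x}$ with $x=C\gamma\bigl(Re\,\w{H}(t)-1-\alpha\bigr)$.

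There is essentially no obstacle here: the statement is a cosmetic restatement of Lemma~\ref{lambda_1}. The single point worth a word of care is that the constant $C$ obtained this way depends on $C_0$ (through the factor $1-C_0$); this is consistent with the paper's conventions, since $C_0$ is a fixed structural parameter introduced together with (\ref{condition}) and does not depend on $n$.
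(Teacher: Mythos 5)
Your proof is correct and matches the paper's (implicit) reasoning: the paper gives no separate argument for this corollary, presenting it as an immediate consequence of Lemma~\ref{lambda_1}, and your deduction — using $Re(\w{H}(t)-1)\leqslant0$, $\alpha\leqslant C_0<1$, and $1+x\leqslant\ee^x$ to absorb the numerical coefficients $0.4(1-\alpha)$ and $0.2$ into a single constant $C\leqslant\min\{0.2,\,0.4(1-C_0)\}$ — is exactly the intended one. Your remark that $C$ depends on $C_0$ is consistent with the paper's conventions, since $C_0$ is a fixed structural parameter of condition~(\ref{condition}).
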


%%%%%%%%%%%%%%%%%%%%%%%%%%%%%%%%%%%%%%%%%%%%%%%%%%
%
Next we demonstrate that $\ab{\w{W}_2(t)}$ is always small.
%\subsection{$\w{W}_2(t)$ estimate}
%Lemma about w_2
%
\begin{lemma}\label{w2}
Let condition (\ref{condition}) hold, $\ab{ t } \leqslant\pi$. Then
\[
\ab{\w{W}_2(t)}\leqslant2(d+1)\ab{\eit-1}.
\]
\end{lemma}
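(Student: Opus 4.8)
The plan is to bound $\ab{\w W_2(t)}$ directly from the formula for $\w W_{1,2}(t)$ given in Lemma~\ref{lema_1}, taking the lower sign (so $\w W_2$ is attached to $\w\varLambda_2$). Recall
\[
\w W_2(t)=\frac{(\ee^{(d+1)\ii t}-1)(\beta-\gamma(1-\alpha))-(\deit-1)\w\varLambda_2(t)+(\eit-1)[\gamma\w\varLambda_2(t)-\beta+\gamma(1-\alpha)]}{-(\w\varLambda_2(t)-\deit)\sqrt{\w D(t)}}.
\]
The numerator is a sum of terms each carrying a factor $\ee^{(d+1)\ii t}-1$, $\deit-1$, or $\eit-1$; since $\ab{\ee^{m\ii t}-1}\le m\ab{\eit-1}$ for $m\in\NN$ (telescoping $\ee^{m\ii t}-1=(\eit-1)(1+\eit+\cdots+\ee^{(m-1)\ii t})$), every piece of the numerator is bounded by a constant multiple of $(d+1)\ab{\eit-1}$ times a bounded coefficient. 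So the numerator is $\le C(d+1)\ab{\eit-1}$, and the remaining work is a lower bound on $\ab{(\w\varLambda_2(t)-\deit)\sqrt{\w D(t)}}$.

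First I would handle $\ab{\sqrt{\w D(t)}}$ from below. By Lemma~\ref{sqrtD_short}, $\sqrt{\w D(t)}=1+\gamma-\beta\eit+5.81\theta\gamma$, so $\ab{\sqrt{\w D(t)}}\ge \ab{1+\gamma-\beta\eit}-5.81\gamma\ge 1-\beta-5.81\gamma$ — but with $\gamma\le0.05$, $\beta\le0.15$ this is only $\ge 0.5545$, comfortably bounded away from $0$; more simply $\ab{1+\gamma-\beta\eit}\ge 1-\beta\ge0.85$, so $\ab{\sqrt{\w D(t)}}\ge 0.85-5.81\gamma\ge 0.55$. Then I would bound $\ab{\w\varLambda_2(t)-\deit}$ from below. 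The safe route is: $\ab{\w\varLambda_2(t)-\deit}\ge \ab{\deit}-\ab{\w\varLambda_2(t)}=1-\ab{\w\varLambda_2(t)}\ge 1-(\beta+4\gamma)\ge 0.65$ by Lemma~\ref{lambda_2} and the observation $\beta+4\gamma\le0.35$ (Corollary~\ref{lambda_2_corr}). Combining, the denominator is $\ge 0.55\cdot0.65>0.3$ in absolute value, uniformly in $\ab t\le\pi$.

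Putting the pieces together,
\[
\ab{\w W_2(t)}\le \frac{C(d+1)\ab{\eit-1}}{0.3}\le 2(d+1)\ab{\eit-1},
\]
after checking that the accumulated constant in the numerator bound — coming from $\ab{\beta-\gamma(1-\alpha)}\le\beta+\gamma$, the factor $d+1$ versus $1$ and $d$ in the three exponential differences, and $\ab{\gamma\w\varLambda_2(t)-\beta+\gamma(1-\alpha)}\le\beta+2\gamma$ — is small enough (all these coefficients are $\le1$) that after division by the denominator bound the final constant is at most $2$. I expect the only mildly delicate point to be bookkeeping the numerator so that the overall constant really comes out as $2$ rather than some larger absolute constant; this just requires writing $\deit-1$ and $\ee^{(d+1)\ii t}-1$ as $\le d\ab{\eit-1}$ and $\le(d+1)\ab{\eit-1}$ respectively and grouping the three terms, using $\ab{\w\varLambda_2(t)}\le0.35$ where $\w\varLambda_2$ appears as a coefficient. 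No genuine obstacle is present — the estimate is a routine modulus bound once the denominator is seen to be bounded away from zero.
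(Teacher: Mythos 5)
Your proposal is correct and follows essentially the same route as the paper's proof: bound the numerator via $\ab{\ee^{m\ii t}-1}\le m\ab{\eit-1}$, bound $\ab{\sqrt{\w D(t)}}$ from below using Lemma~\ref{sqrtD_short} (the paper uses $\ab{1+\gamma-\beta\eit}\ge 1+\gamma-\beta$ to obtain $\ge 0.6$, marginally tighter than your $0.55$, but both suffice), bound $\ab{\w\varLambda_2(t)-\deit}\ge 0.65$ via Corollary~\ref{lambda_2_corr}, and then verify the constant $2$ by the same bookkeeping you sketch with $\ab{\beta-\gamma(1-\alpha)}\le 0.15$ and $\ab{\w\varLambda_2(t)}\le 0.35$.
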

\begin{proof}
From Lemma \ref{sqrtD_short} we have
\begin{equation}
\label{sqrtD2} \Ab{\sqrt{\w{D}(t)}}\geqslant1+\gamma-\beta-5.81\gamma\geqslant
1-4.81\cdot0.05-0.15\geqslant0.6.
\end{equation}
By applying Corollary \ref{lambda_2_corr}, we get
\begin{equation}
\label{l_ekit} \ab{\w{\varLambda}_2(t)-\deit}\geqslant1-\ab{\w{
\varLambda}_2(t)}\geqslant 1-0.35=0.65.
\end{equation}
Hence,
\begin{eqnarray}
\label{abw2_1} \Ab{\w{W}_2(t)}\!\!\!\!&\leqslant&\!\!\!\!\frac{(d+1)\ab{\eit-1} (2\ab{\beta-\gamma
(1-\alpha)}+(1+\gamma)\ab{\w{\varLambda}_2(t)} )}{0.65\cdot
0.6}
\nonumber
\\
\!\!\!\!&\leqslant&\!\!\!\!\frac{(d+1)\ab{\eit-1} (2\max\{\beta, \gamma(1-\alpha)\}
+(1+\gamma)\cdot0.35 )}{0.39}
\nonumber
\\
\!\!\!\!&\leqslant&\!\!\!\!2(d+1)\ab{\eit-1}.
\end{eqnarray}
\end{proof}

To approximate $\ab{\w{W}_1(t)}$, we need a longer expansion for
$\sqrt{\w{D}(t)}$.

%%%%%%%%%%%%%%%%%%%%%%%
%%%%%%%% ILGAS sqrtD
%\subsection{Long $\sqrt{\w{D}(t)}$ expansion}

\begin{lemma}\label{sqrtD_long}
Let condition (\ref{condition}) hold, $\ab{t}\leqslant\pi$. Then
\[
\sqrt{\w{D}(t)}=2\w{A}(t)-1+\gamma-\beta\eit+C\theta\gamma^4\bigl(
\bigl(1-Re\w {H}(t)\bigr)^2+\alpha^4\bigr).
\]
If also $\alpha\geqslant C_2$, then
\[
\Bigl(\sqrt{\w{D}(t)}\Bigr)'=\bigl(2\w{\varDelta}_1(t)-1+
\gamma-\beta\eit\bigr)'+C\theta\gamma^3.
\]
\end{lemma}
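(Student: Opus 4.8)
The plan is to expand $\sqrt{\w D(t)}$ to one more order than in Lemma~\ref{sqrtD_short}, match the resulting polynomial in $\gamma$ against the measures $\w A_1,\dots,\w A_6$ collected in Table~\ref{tab1}, and bound the tail. Concretely, I would start from the factorization~\eqref{D} and the binomial series~\eqref{sqrtD}, but this time keep the terms with $\binom{1/2}{1}$, $\binom{1/2}{2}$ and $\binom{1/2}{3}$ explicitly and lump the rest into a remainder. Writing $x=(1-\alpha)\eit-1=\w U(t)$ and noting $\w\Psi(t)-1=x/(1-\beta\eit)$, each term of the truncated series is $(1+\gamma-\beta\eit)\binom{1/2}{j}\bigl(4\gamma x/(1+\gamma-\beta\eit)^2\bigr)^j$, which after one application of the partial-fraction identity~\eqref{trupmenos} (used already in the proof of Lemma~\ref{lambda_1}) turns powers of $1/(1+\gamma-\beta\eit)$ into powers of $1/(1-\beta\eit)$ times powers of $(\eit-1)$; this is exactly the shape of $\w A_2,\dots,\w A_6$. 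So the first main step is the bookkeeping that identifies
\[
\sqrt{\w D(t)} \;=\; (1+\gamma-\beta\eit) \;+\; 2\w A_1(t)\gamma \;+\; 2\bigl(\w A_2(t)+\w A_4(t)\bigr)\gamma^2 \;+\; 2\bigl(\w A_3(t)+\w A_5(t)+\w A_6(t)\bigr)\gamma^3 \;+\; R(t),
\]
i.e.\ $2\w A(t)-1+\gamma-\beta\eit+R(t)$, after recalling the definition of $\w A(t)$ in Table~\ref{tab1} (the extra $-\tfrac12\w A_1^2\gamma^2$-type corrections in $\w G$ do not appear here; one must be careful that $\w A$ itself, not $\w G$'s exponent, is what matches).

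For the remainder $R(t)$ I would argue as in Lemma~\ref{sqrtD_short}: the ratio $\bigl|4\gamma x/(1+\gamma-\beta\eit)^2\bigr|\le 0.5$ under~\eqref{condition}, so the geometric tail beyond the cubic term is controlled by its first omitted term, which carries a factor $\gamma^4|x|^4/|1+\gamma-\beta\eit|^7$; since $|1+\gamma-\beta\eit|\ge 0.85$ this is $C\theta\gamma^4|x|^4$. Then I would split $|x|^4=|(1-\alpha)(\eit-1)-\alpha|^4\le C\bigl((1-\alpha)^4|\eit-1|^4+\alpha^4\bigr)$ and use $|\eit-1|^2=2(1-\cos t)\le C(1-\mathrm{Re}\,\w H(t))\cdot|1-\beta\eit|^2\le C(1-\mathrm{Re}\,\w H(t))$ (since $1-\mathrm{Re}\,\w H(t)$ is comparable to $1-\cos t$ under $\beta\le 0.15$), giving $|x|^4\le C\bigl((1-\mathrm{Re}\,\w H(t))^2+\alpha^4\bigr)$ and hence the stated bound. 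The partial-fraction substitution also produces finitely many lower-order cross terms with an extra $(\eit-1)$ factor; I expect each of these to be absorbed into $C\theta\gamma^4((1-\mathrm{Re}\,\w H(t))^2+\alpha^4)$ by the same $|\eit-1|^2\le C(1-\mathrm{Re}\,\w H(t))$ estimate, but verifying that none of them is merely $O(\gamma^4)$ without the $(1-\mathrm{Re}\,\w H)$ gain — e.g.\ a term proportional to $\alpha^3$ rather than $\alpha^4$ — is the delicate point and the main obstacle.

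For the derivative statement I would differentiate the same truncated expansion. Now only the linear and quadratic terms survive in the main part, which is precisely $2\w\Delta_1(t)-1+\gamma-\beta\eit$ by the definition of $\w\Delta_1$ in Table~\ref{tab1}; the cubic and higher terms, after differentiation, must be shown to be $C\theta\gamma^3$. Here the extra hypothesis $\alpha\ge C_2$ is what makes this clean: it bounds $1/\alpha$ (and hence the various $E$-type denominators) and, more importantly, lets me estimate $\bigl|\bigl((\eit-1)^k/(1-\beta\eit)^m\bigr)'\bigr|\le C$ for $|t|\le\pi$ by absolute constants without needing a compensating $(1-\mathrm{Re}\,\w H)$ factor — a $\gamma^3$ bound with no extra smallness in $t$ is all that is claimed, so crude $|\,\cdot\,|\le C$ bounds on each differentiated factor, together with $|\binom{1/2}{j}|\le 1$ and the geometric tail $\le 0.5^{j-3}$, suffice. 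The only care needed is that differentiating $1/(1+\gamma-\beta\eit)$ produces a factor $\beta\eit/(1+\gamma-\beta\eit)^2$ of modulus $\le C$, so no denominator blows up. I would close by noting that the remainder after differentiation is a finite sum of such terms plus the differentiated tail, all of size $C\theta\gamma^3$.
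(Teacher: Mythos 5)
Your proposal follows the paper's route exactly: the published proof is a one-line pointer to equations~\eqref{D} and~\eqref{trupmenos}, and what you spell out -- binomial expansion of $\sqrt{\widehat D(t)}$ through $j=3$, iterated use of the partial-fraction identity to convert $(1+\gamma-\beta\eit)^{-k}$ into the $(1-\beta\eit)^{-m}$-shaped blocks that make up $\widehat A_1,\dots,\widehat A_6$, and a geometric-tail remainder -- is evidently what the authors mean. Your bookkeeping identification $\sqrt{\widehat D(t)}=(1+\gamma-\beta\eit)+2\widehat A_1\gamma+2(\widehat A_2+\widehat A_4)\gamma^2+2(\widehat A_3+\widehat A_5+\widehat A_6)\gamma^3+R$ matches Table~\ref{tab1}, and the derivative statement under $\alpha\geqslant C_2$ is handled correctly by crude $|\cdot|\leqslant C$ bounds.

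The one place you flag as ``the delicate point and the main obstacle'' is in fact closable, so there is no real gap. Each cross term produced by iterating~\eqref{trupmenos} at $\gamma^4$ level has the shape $\gamma^4\,|x|^k\,|\eit-1|^{4-k}$ with $x=(1-\alpha)\eit-1$ and $k\in\{1,2,3\}$; together with the binomial tail ($k=4$), all contributions are of the form $\gamma^4\,\alpha^m|\eit-1|^{4-m}$ with $0\leqslant m\leqslant 4$. Applying Young's inequality (AM--GM) term by term with exponents $4/m$ and $4/(4-m)$ gives
\[
\alpha^m\ab{\eit-1}^{4-m}\leqslant \frac{m}{4}\alpha^4+\frac{4-m}{4}\ab{\eit-1}^4\leqslant C\bigl(\alpha^4+\ab{\eit-1}^4\bigr),
\]
and since $\ab{\eit-1}^2=2(1-\cos t)\leqslant 2(1+\beta)\bigl(1-\mathrm{Re}\,\widehat H(t)\bigr)$ (as you note), one has $\ab{\eit-1}^4\leqslant C\bigl(1-\mathrm{Re}\,\widehat H(t)\bigr)^2$. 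So every remainder term really is absorbed into $C\theta\gamma^4\bigl((1-\mathrm{Re}\,\widehat H(t))^2+\alpha^4\bigr)$; a term proportional to $\alpha^3$ without a compensating power of $\ab{\eit-1}$ cannot occur, because each cross term carries exactly four factors drawn from $\{x,\,(\eit-1)\}$.
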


\begin{proof}
The expansion of $\w{D}(t)$ follows from equations (\ref{D}) and (\ref{trupmenos}).
The second equation of this lemma is proved similarly.
\end{proof}

\begin{cor}\label{sqrtD_long_corr}
Let condition (\ref{condition}) hold, $\ab{t}\leqslant\pi$. Then
\[
\w{\varLambda}_1(t)=\w{A}(t)+C\theta\gamma^4\bigl(
\bigl(1-Re\w{H}(t)\bigr)^2+\alpha^4\bigr).
\]
\end{cor}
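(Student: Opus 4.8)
The plan is to derive this corollary as an immediate consequence of Lemma~\ref{lema_1} together with the first expansion in Lemma~\ref{sqrtD_long}. By Lemma~\ref{lema_1} we have the closed form
\[
\w{\varLambda}_1(t)=\frac{1-\gamma+\beta\eit+\sqrt{\w{D}(t)}}{2},
\]
valid for all $\ab{t}\leqslant\pi$, so the task reduces to inserting a good enough approximation for $\sqrt{\w{D}(t)}$ and simplifying.

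First I would substitute the identity from Lemma~\ref{sqrtD_long},
\[
\sqrt{\w{D}(t)}=2\w{A}(t)-1+\gamma-\beta\eit+C\theta\gamma^4\bigl(\bigl(1-Re\w{H}(t)\bigr)^2+\alpha^4\bigr),
\]
into the formula above. The deterministic terms $1-\gamma+\beta\eit$ coming from the $\w{\varLambda}_1$ expression and the terms $-1+\gamma-\beta\eit$ coming from the expansion of $\sqrt{\w{D}(t)}$ cancel exactly, leaving $2\w{A}(t)$ plus the error term in the numerator. Dividing by $2$ yields $\w{\varLambda}_1(t)=\w{A}(t)+C\theta\gamma^4\bigl(\bigl(1-Re\w{H}(t)\bigr)^2+\alpha^4\bigr)$, which is the claim; the factor $1/2$ is absorbed into the generic constant $C$.

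There is essentially no obstacle here: all the analytic work — the multinomial expansion of $\sqrt{\w{D}(t)}$ via \eqref{D}, the partial-fraction rewriting \eqref{trupmenos}, and the bookkeeping of the remainder in terms of $(1-Re\w{H}(t))^2$ and $\alpha^4$ under condition \eqref{condition} — has already been carried out in Lemma~\ref{sqrtD_long}. The only point worth checking carefully is that the cancellation of the $1+\gamma-\beta\eit$ type terms is exact (it is, by construction of the expansion), so that no extra $O(\gamma)$ or $O(\gamma^2)$ term survives and the error is genuinely of order $\gamma^4$. Hence the proof is just the substitution described above.
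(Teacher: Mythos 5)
Your proposal is correct and is exactly the intended argument: the paper states this as an immediate corollary of Lemma~\ref{sqrtD_long} without a separate proof, relying precisely on the substitution of the $\sqrt{\w{D}(t)}$ expansion into the formula for $\w{\varLambda}_1(t)$ from Lemma~\ref{lema_1} and the exact cancellation of the $1-\gamma+\beta\eit$ terms. The factor $1/2$ being absorbed into $C$ is the only bookkeeping point, and you handle it correctly.
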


\begin{cor}\label{sqrtD_long_corr_2}
Let condition (\ref{condition}) hold, $\alpha\geqslant C_2$,
$\ab{t}\leqslant\pi$. Then
\[
\w{\varLambda}_1(t)=1+\w{A}_1(t)\gamma+ \bigl(
\w{A}_2(t)+\w{A}_4(t)\bigr)\gamma ^2+C\theta
\gamma^3.
\]
\end{cor}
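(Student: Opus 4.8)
The plan is to obtain this as an immediate consequence of Corollary~\ref{sqrtD_long_corr}, which already provides
\[
\w{\varLambda}_1(t)=\w{A}(t)+C\theta\gamma^4\bigl(\bigl(1-Re\w{H}(t)\bigr)^2+\alpha^4\bigr);
\]
all that is left is to truncate $\w{A}(t)$ after the $\gamma^2$ term and to check that the discarded $\gamma^3$ part of $\w{A}(t)$, as well as the error term above, are both of the form $C\theta\gamma^3$. Equivalently, one is showing $\w{\varLambda}_1(t)=\w{\varDelta}_1(t)+C\theta\gamma^3$, since $\w{\varDelta}_1(t)=1+\w{A}_1(t)\gamma+(\w{A}_2(t)+\w{A}_4(t))\gamma^2$ by Table~\ref{tab1}.

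First I would regroup the expression for $\w{A}(t)$ from Table~\ref{tab1} by powers of $\gamma$:
\[
\w{A}(t)=1+\w{A}_1(t)\gamma+\bigl(\w{A}_2(t)+\w{A}_4(t)\bigr)\gamma^2+\bigl(\w{A}_3(t)+\w{A}_5(t)+\w{A}_6(t)\bigr)\gamma^3.
\]
It then suffices to bound $\w{A}_3(t)+\w{A}_5(t)+\w{A}_6(t)$ by an absolute constant. For this I would use the elementary estimates valid under condition~(\ref{condition}) for $\ab{t}\leqslant\pi$: the denominators satisfy $\ab{1-\beta\eit}\geqslant1-\beta\geqslant0.85$ and $1+\gamma-\beta\geqslant0.85$, while $H$ is a probability measure on $\{1,2,\dots\}$ so $\ab{\w{H}(t)-1}\leqslant1+\ab{\w{H}(t)}\leqslant2$ and likewise $\ab{\w{\varPsi}(t)-1}\leqslant2$ because $\ab{\w{\varPsi}(t)}\leqslant1$ by (\ref{psi_1}). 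Feeding these, together with $\beta\leqslant0.15$ and $1-\beta\leqslant1$, into the formulas for $\w{A}_3(t)$, $\w{A}_5(t)$, $\w{A}_6(t)$ gives $\ab{\w{A}_3(t)+\w{A}_5(t)+\w{A}_6(t)}\leqslant C$, hence $\bigl(\w{A}_3(t)+\w{A}_5(t)+\w{A}_6(t)\bigr)\gamma^3=C\theta\gamma^3$. For the remainder in Corollary~\ref{sqrtD_long_corr} note that $\bigl(1-Re\w{H}(t)\bigr)^2\leqslant4$ and $\alpha^4\leqslant C_0^4<1$, so that term equals $C\theta\gamma^4$, which is $C\theta\gamma^3$ since $\gamma\leqslant0.05<1$. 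Combining the three observations yields the asserted identity.

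There is no real obstacle here; the whole content is bookkeeping of uniform bounds, and the only care needed is to record correctly the lower bounds on $\ab{1-\beta\eit}$ and $1+\gamma-\beta$ and the upper bounds on $\ab{\w{H}(t)-1}$ and $\ab{\w{\varPsi}(t)-1}$, all of which are direct from (\ref{condition}). The hypothesis $\alpha\geqslant C_2$ is not actually used in this corollary and is retained only to keep its statement aligned with the hypotheses of Lemma~\ref{sqrtD_long} and the later results that invoke it. If one preferred a self-contained argument, the same bound could be read off by expanding $\sqrt{\w{D}(t)}$ directly through (\ref{sqrtD}) and (\ref{trupmenos}) to order $\gamma^3$ and comparing with $\w{\varDelta}_1(t)$, but routing through the already-established Corollary~\ref{sqrtD_long_corr} is shorter.
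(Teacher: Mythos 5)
Your proposal is correct and follows what is almost certainly the paper's intended (but unwritten) route: the paper states \ref{sqrtD_long_corr} and \ref{sqrtD_long_corr_2} as immediate consequences of Lemma \ref{sqrtD_long} without proof, and your argument — regroup $\w{A}(t)=1+\w{A}_1(t)\gamma+(\w{A}_2(t)+\w{A}_4(t))\gamma^2+(\w{A}_3(t)+\w{A}_5(t)+\w{A}_6(t))\gamma^3$, bound the $\gamma^3$-coefficient by an absolute constant using $1+\gamma-\beta\geqslant0.85$, $\ab{1-\beta\eit}\geqslant0.85$, $\ab{\w{H}(t)-1}\leqslant2$, $\ab{\w{\varPsi}(t)-1}\leqslant2$, and absorb the $C\theta\gamma^4((1-Re\w{H}(t))^2+\alpha^4)$ remainder into $C\theta\gamma^3$ via $\gamma<1$ — is exactly the bookkeeping that the cited corollary requires. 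Your remark that the hypothesis $\alpha\geqslant C_2$ is not actually invoked in this step (it matters only for the companion derivative estimate in Lemma \ref{sqrtD_long} and the later results) is also accurate.
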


%%%%%%%%%%%%%%%%%%%%%%%%%%
%%%%%%%%%%%%%%%%%%%%%%%%%%
%%%%%%%%%%%%%%%%%%%%%%%%%%
The following three lemmas are needed for the approximation of ${W}_1$.
%\subsection{$\w{W}_1(t)$ approximation}

%%%%%%%%%%%%%%%%%%%
%%%% Lemos apie A(t) ir \varDelta_1(t)

\begin{lemma}\label{At}
Let condition (\ref{condition}) hold, $\ab{t}\leqslant\pi$. Then
\[
\ab{\w{A}(t)}\leqslant1+C\gamma\bigl(Re\w{H}(t)-1-\alpha\bigr).
\]
If also $\alpha\geqslant C_2$, then there exists C such that
\[
\ab{\w{\varDelta}_1(t)}\leqslant1-C\gamma.
\]
\end{lemma}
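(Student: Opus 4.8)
The plan is to read off both estimates from the bounds already obtained for $\w{\varLambda}_1(t)$: Corollary~\ref{lambda_1_corr} controls $\ab{\w{\varLambda}_1(t)}$, while Corollaries~\ref{sqrtD_long_corr} and \ref{sqrtD_long_corr_2} say that $\w{A}(t)$ and $\w{\varDelta}_1(t)$ differ from $\w{\varLambda}_1(t)$ only by a controlled higher‑order remainder. First I would record two elementary facts valid under (\ref{condition}): since $\ab{\w{H}(t)}=(1-\beta)/\ab{1-\beta\eit}\leqslant1$, the number $1-Re\w{H}(t)$ lies in $[0,2]$, so $(1-Re\w{H}(t))^2\leqslant2(1-Re\w{H}(t))$; and $\alpha^4\leqslant\alpha$ because $\alpha\in(0,1)$. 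Writing $R:=1+\alpha-Re\w{H}(t)$, one then has $R\geqslant\alpha>0$ and $(1-Re\w{H}(t))^2+\alpha^4\leqslant2R$, and the first asserted inequality is simply $\ab{\w{A}(t)}\leqslant1-C\gamma R$.

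For that inequality, Corollary~\ref{sqrtD_long_corr} gives $\ab{\w{A}(t)}\leqslant\ab{\w{\varLambda}_1(t)}+C\gamma^4\bigl((1-Re\w{H}(t))^2+\alpha^4\bigr)\leqslant\ab{\w{\varLambda}_1(t)}+C\gamma^4R$, and Corollary~\ref{lambda_1_corr} gives $\ab{\w{\varLambda}_1(t)}\leqslant1+C\gamma(Re\w{H}(t)-1-\alpha)=1-C\gamma R$; adding these and writing $\gamma^4R=\gamma^3\cdot\gamma R$ with $\gamma\leqslant0.05$, the $O(\gamma^4R)$ remainder is absorbed into a fraction of the $C\gamma R$ term, which yields $\ab{\w{A}(t)}\leqslant1-C\gamma R$. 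For the second inequality assume $\alpha\geqslant C_2$. Corollary~\ref{sqrtD_long_corr_2} gives $\w{\varLambda}_1(t)-\w{\varDelta}_1(t)=C\theta\gamma^3$, so $\ab{\w{\varDelta}_1(t)}\leqslant\ab{\w{\varLambda}_1(t)}+C\gamma^3\leqslant1-C\gamma R+C\gamma^3$ by Corollary~\ref{lambda_1_corr}. Since $1-Re\w{H}(t)\geqslant0$ we have $R\geqslant\alpha\geqslant C_2$, hence $C\gamma R\geqslant C\gamma$, and writing $\gamma^3=\gamma^2\cdot\gamma$ with $\gamma\leqslant0.05$ the remainder $C\gamma^3$ is absorbed into a fraction of $C\gamma$, giving $\ab{\w{\varDelta}_1(t)}\leqslant1-C\gamma$.

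The only delicate point is this final bookkeeping: one has to verify, with the explicit constants hidden in Corollaries~\ref{lambda_1_corr}, \ref{sqrtD_long_corr} and \ref{sqrtD_long_corr_2}, that the positive remainders of orders $\gamma^4R$ and $\gamma^3$ really are dominated by the negative leading terms $-C\gamma R$ and $-C\gamma$ (and, for the second inequality, that $R\geqslant C_2$ makes $C\gamma R$ large enough); this is exactly where the numerical restrictions $\beta\leqslant0.15$, $\gamma\leqslant0.05$ in (\ref{condition}) are genuinely needed. No new estimate is required; alternatively $\w{A}(t)$ could be bounded directly term by term from its defining expansion, estimating the leading piece $1+\w{A}_1(t)\gamma$ via inequality (\ref{nu_psi}) exactly as in the proof of Lemma~\ref{lambda_1}, but the route through $\w{\varLambda}_1(t)$ is the shortest.
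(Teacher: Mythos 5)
Your proof is correct but takes a genuinely different route from the one the paper intends. The paper only says the proof ``is very similar to the proof of Lemma~\ref{lambda_1} and, therefore, is omitted,'' i.e.\ its intended argument is a \emph{direct} term-by-term estimation of $\w{A}(t)$ (resp.\ $\w{\varDelta}_1(t)$) from the defining expansion, writing the leading piece $1+\w{A}_1(t)\gamma$ as $1+\nu(\w{\Psi}(t)-1)$ and invoking inequality~(\ref{nu_psi}) exactly as you sketch in your closing alternative. You instead deduce both estimates \emph{indirectly} from Corollary~\ref{lambda_1_corr} together with the remainder statements of Corollaries~\ref{sqrtD_long_corr} and~\ref{sqrtD_long_corr_2}, and your elementary observations $(1-Re\w{H}(t))^2\leqslant 2(1-Re\w{H}(t))$ and $\alpha^4\leqslant\alpha$ are precisely what is needed to bring the remainder into the same shape $\gamma R$ as the main term so that it can be absorbed. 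There is no circularity, since everything you invoke is proved upstream of Lemma~\ref{At}. The trade-off is what you yourself flag: the indirect route is shorter and re-uses work already done, but the absorption step implicitly compares the two generic constants hidden in Corollaries~\ref{lambda_1_corr} and~\ref{sqrtD_long_corr}, and for the second inequality it requires $\gamma^3$ to be dominated by a positive multiple of $C_2\gamma$, so the resulting $C$ genuinely depends on $C_2$. The paper's direct estimation would have the same dependence and the same reliance on $\gamma\leqslant 0.05$ to make the higher-order pieces small, so this is a feature of the problem rather than a defect of your argument; still, in a fully written-out version one should state explicitly that the constants in those corollaries, once traced through, leave a comfortable margin.
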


\begin{proof}
The proof is very similar to the proof of Lemma \ref{lambda_1} and,
therefore, is omitted.
\end{proof}

%Lemma about w_1
%
\begin{lemma}\label{w1}
Let condition (\ref{condition}) hold, $\ab{ t } \leqslant\pi$. Then
\[
\ab{\w{W}_1(t)-\w{V}(t)}\leqslant C(d+1)\gamma\ab{\eit-1}.
\]
\end{lemma}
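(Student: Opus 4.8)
The plan is to compare the exact expression for $\w{W}_1(t)$ from Lemma \ref{lema_1} with the measure $\w{V}(t)$ defined in Table \ref{tab1}. Writing out $\w{W}_1(t)$, its denominator is $(\w{\varLambda}_1(t)-\deit)\sqrt{\w{D}(t)}$, while $\w{V}(t)$ has denominator $(\w{A}(t)-\deit)(2\w{\varDelta}(t)-1+\gamma-\beta\eit)$. The two numerators are structurally identical up to $\w{\varLambda}_1(t)$ being replaced by $\w{\varDelta}(t)=1+\w{A}_1(t)\gamma$ in the coefficient of $(\deit-1)$ and in the bracket multiplying $(\eit-1)$. So the first task is to check that, after substituting the short expansion $\sqrt{\w{D}(t)}=1+\gamma-\beta\eit+5.81\theta\gamma$ from Lemma \ref{sqrtD_short}, one has $2\w{\varLambda}_1(t)-\beta\eit = 1-\gamma+\beta\eit+\sqrt{\w{D}(t)}$ so that the factor $2\w{\varDelta}(t)-1+\gamma-\beta\eit$ in $\w{V}$ plays the role of $\sqrt{\w{D}(t)}$ after the replacement $\w{\varLambda}_1\mapsto\w{\varDelta}$. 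This identifies $\w{V}(t)$ as exactly what one gets from $\w{W}_1(t)$ by replacing $\w{\varLambda}_1(t)$ by $\w{\varDelta}(t)$ everywhere it appears (both in the numerator and, via $2\w{\varLambda}_1-\beta\eit$, in the $\sqrt{\w D}$ factor of the denominator).

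Given that reduction, the error $\w{W}_1(t)-\w{V}(t)$ is controlled by the difference $\w{\varLambda}_1(t)-\w{\varDelta}(t)$. By Corollary \ref{sqrtD_long_corr}, $\w{\varLambda}_1(t)=\w{A}(t)+C\theta\gamma^4((1-Re\w{H}(t))^2+\alpha^4)$, and from the definitions $\w{A}(t)-\w{\varDelta}(t)=(\w{A}_2(t)+\w{A}_4(t))\gamma^2+(\w{A}_3(t)+\w{A}_5(t)+\w{A}_6(t))\gamma^3$; each $\w{A}_j(t)$ carries a factor $(\w{\Psi}(t)-1)$, hence a factor $\ab{\eit-1}$ (recall $\w{\Psi}(t)-1=((1-\alpha)\eit-1)/(1-\beta\eit)$, so $\ab{\w\Psi(t)-1}\le C(\ab{\eit-1}+\alpha)$), and so $\ab{\w{\varLambda}_1(t)-\w{\varDelta}(t)}\le C\gamma^2(\ab{\eit-1}+\alpha)$. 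Wait---this is $O(\gamma^2)$ not $O(\gamma\ab{\eit-1})$, so the bound must come from the $\deit-1$ and $\eit-1$ prefactors in the numerator, which force the whole of $\w{W}_1$ and $\w{V}$ to be $O((d+1)\ab{\eit-1})$ with denominators bounded below; I would then combine the two estimates. More precisely: put both fractions over a common denominator, bound each denominator factor from below using (\ref{sqrtD2}), Corollary \ref{lambda_2_corr}-type arguments applied to $\w{\varLambda}_1$ (via Lemma \ref{lambda_1}, $\ab{\w{\varLambda}_1}\le 1$, so $\ab{\w{\varLambda}_1-\deit}\ge$ const near $t=0$ is false---rather use that the relevant combined factor $2\w{\varDelta}-1+\gamma-\beta\eit$ has modulus $\ge 0.6$ by the same computation as (\ref{sqrtD2}), and $\ab{\w{A}(t)-\deit}$, $\ab{\w{\varLambda}_1(t)-\deit}$ are bounded below), and in the numerator of the difference extract one factor $\ab{\eit-1}$ (present in every term: $\ab{\ee^{(d+1)\ii t}-1}\le(d+1)\ab{\eit-1}$, $\ab{\deit-1}\le d\ab{\eit-1}$, and the trailing term already has $\eit-1$) times a remaining factor bounded by $C\gamma$ coming from $\ab{\w{\varLambda}_1-\w{\varDelta}}$ plus the cross terms where the denominators differ.

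The main obstacle will be bookkeeping the denominator lower bounds uniformly in $t\in[-\pi,\pi]$: one must show $\ab{\w{\varLambda}_1(t)-\deit}$ and $\ab{\w{A}(t)-\deit}$ stay away from zero, and that the difference of the two denominators is itself $O(\gamma)$ (it is, since $\w{\varLambda}_1-\w{\varDelta}=O(\gamma^2)$ by the above and $2\w{\varLambda}_1-\beta\eit$ vs. $2\w{\varDelta}-1+\gamma-\beta\eit$ differ by $2(\w{\varLambda}_1-\w{\varDelta})=O(\gamma^2)$). Once the two denominators are each bounded below by an absolute constant and differ by $O(\gamma)$, and the two numerators are each $O((d+1)\ab{\eit-1})$ and differ by $O((d+1)\gamma\ab{\eit-1})$, the elementary identity $\frac{a}{b}-\frac{a'}{b'}=\frac{(a-a')b' + a'(b'-b)}{bb'}$ yields $\ab{\w{W}_1(t)-\w{V}(t)}\le C(d+1)\gamma\ab{\eit-1}$, which is the claim. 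I would also double-check the special role of small $\ab{t}$, where $\ab{\eit-1}$ itself is small and no denominator degenerates because $\w{D}(0)=(1+\gamma-\beta)^2>0$ and $\w{\varLambda}_1(0)=1\ne\deit|_{t=0}=1$---hmm, these coincide at $t=0$, so $\w{\varLambda}_1(0)-\deit|_{t=0}=0$; but then both $\w{W}_1$ and $\w{V}$ have a removable singularity there cancelled by the $\eit-1$ and $\deit-1$ factors in the numerator, and this cancellation must be tracked so that the quotient of the $O(\ab{\eit-1})$ numerator by the $O(\ab{\eit-1})$ vanishing denominator factor stays bounded. This is the genuinely delicate point and I expect the authors handle it by the algebraic simplification (\ref{w12_nominator}) already carried out in the proof of Lemma \ref{lema_1}, which exhibits the cancellation explicitly.
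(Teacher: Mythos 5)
The strategy is right in outline (identify $\w V$ as the $\w{\varLambda}_1\mapsto\w{\varDelta}$, $\w A$ substitution, then use the quotient identity $\frac{a}{b}-\frac{a'}{b'}=\frac{(a-a')b'+a'(b'-b)}{bb'}$ together with the expansion lemmas), but there is a genuine gap in the denominator estimates, and it is precisely the one you brushed against at the end. You write ``Once the two denominators are each bounded below by an absolute constant and differ by $O(\gamma)$\dots'' --- but $\ab{\w{\varLambda}_1(t)-\deit}$ and $\ab{\w A(t)-\deit}$ are \emph{not} bounded below by absolute constants. At $t=0$ one has $\w{\varLambda}_1(0)-1=O(\gamma\alpha)$ (and $=0$ when $\alpha=0$), so these factors can be arbitrarily small. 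The correct lower bound, which is exactly what the paper records as (\ref{l-deit}) and (\ref{A-deit}) via Corollary \ref{lambda_1_corr} and Lemma \ref{At}, is
\[
\ab{\w{\varLambda}_1(t)-\deit}\;\geqslant\;C\gamma\bigl(1-Re\,\w H(t)+\alpha\bigr),\qquad
\ab{\w A(t)-\deit}\;\geqslant\;C\gamma\bigl(1-Re\,\w H(t)+\alpha\bigr),
\]
obtained from $\ab{\w{\varLambda}_1(t)}\leqslant 1+C\gamma(Re\,\w H(t)-1-\alpha)$ and $\ab{\deit}=1$. The whole point of the lemma is that the numerator's smallness from Lemma \ref{sqrtD_long} and Corollary \ref{sqrtD_long_corr} --- the $C\theta\gamma^4((1-Re\,\w H(t))^2+\alpha^4)$ error, plus the structure of $\w A-\w{\varDelta}$ through the $\w A_j$'s which all carry $(\w{\varPsi}-1)$ factors --- must be divided by this \emph{$\gamma$-scaled, $t$-dependent} lower bound, and one must check that enough powers of $\gamma(1-Re\,\w H(t)+\alpha)$ survive to cancel both denominator factors and still leave $\gamma\ab{\eit-1}$. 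With your (incorrect) absolute-constant lower bound that bookkeeping is trivial and the argument would appear to work, which is a sign that the hard part has been assumed away.

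You do flag the $t=0$ degeneracy (``hmm, these coincide at $t=0$''), but the proposed resolution --- a ``removable singularity'' handled via the simplification (\ref{w12_nominator}) --- is not the right mechanism: (\ref{w12_nominator}) rewrites the numerator of $\w W_{1,2}$ and displays the $\eit-1$, $\deit-1$ factors but says nothing about how the denominator factor $\w{\varLambda}_1(t)-\deit$, which itself can be as small as $O(\gamma(1-Re\,\w H(t)+\alpha))$, is to be compared with those numerator factors. Near $t=0$ the numerator is $O((d+1)\ab{\eit-1})$ and the offending denominator factor is $O(\gamma(\ab{\eit-1}^2+\alpha))$; these are of genuinely different orders, which is why $\w W_1(t)$ and $\w V(t)$ are not individually bounded by $C(d+1)$ --- only the difference $\w W_1-\w V$ is controlled, and only because the \emph{differences} $\w{\varLambda}_1-\w A$, $\sqrt{\w D}-(2\w{\varDelta}-1+\gamma-\beta\eit)$ are of high enough order in $\gamma$ and in $(1-Re\,\w H(t)+\alpha)$. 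The bound (\ref{V}) in the proof of Lemma \ref{G}, $\ab{\w V(t)}\leqslant C(d+1)\ab{\eit-1}/(\gamma(1-Re\,\w H(t)+\alpha))$, makes this $t$-dependence explicit and is what you would need in place of the constant lower bound to complete your argument.
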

\begin{proof}
From Corollary \ref{lambda_1_corr} and Lemma \ref{At} it follows that
\begin{eqnarray}
\ab{\w{\varLambda}_1(t)-\deit}\!\!\!\!&\geqslant&\!\!\!\! C\gamma\bigl(1-Re
\w{H}(t)+\alpha \bigr),\label{l-deit}
\\
\ab{\w{A}(t)-\deit}\!\!\!\!&\geqslant&\!\!\!\! C\gamma\bigl(1-Re\w{H}(t)+\alpha
\bigr).\label{A-deit}
\end{eqnarray}

Applying (\ref{sqrtD2}), (\ref{l-deit}), (\ref{A-deit}), Lemma
\ref{sqrtD_long} and Corollary \ref{sqrtD_long_corr}, the result follows.
\end{proof}

%Lemma about w_1, a>c
%
\begin{lemma}\label{w1_a}
Let condition (\ref{condition}) hold, $\alpha\geqslant C_2$, $\ab{ t }
\leqslant\pi$. Then
\[
\ab{\w{W}_1(t)-\w{V}_1(t)}\leqslant C(d+1)\gamma\ab{
\eit-1}.
\]
\end{lemma}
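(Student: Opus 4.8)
\emph{Proof strategy.} The plan is to obtain the bound from Lemma~\ref{w1} through the triangle inequality: since $\ab{\w{W}_1(t)-\w{V}(t)}\leqslant C(d+1)\gamma\ab{\eit-1}$ is already known, it is enough to prove that $\ab{\w{V}(t)-\w{V}_1(t)}\leqslant C(d+1)\gamma\ab{\eit-1}$. Looking at the definitions of $\w{V}(t)$ and $\w{V}_1(t)$ in Table~\ref{tab1}, these two transforms have literally the same numerators and share the denominator factor $2\w{\varDelta}(t)-1+\gamma-\beta\eit$; they differ only in that the remaining denominator factor $\w{A}(t)-\deit$ of $\w{V}$ is replaced by $\w{\varDelta}_1(t)-\deit$ in $\w{V}_1$. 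Writing $R(t)$ for the sum of the two numerators in the Table~\ref{tab1} formula for $\w{V}(t)$ divided by the shared factor $2\w{\varDelta}(t)-1+\gamma-\beta\eit$, so that $\w{V}(t)=R(t)/(\w{A}(t)-\deit)$ and $\w{V}_1(t)=R(t)/(\w{\varDelta}_1(t)-\deit)$, we get
\[
\w{V}(t)-\w{V}_1(t)=R(t)\,\frac{\w{\varDelta}_1(t)-\w{A}(t)}{(\w{A}(t)-\deit)(\w{\varDelta}_1(t)-\deit)},
\]
so four factors must be estimated.

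First, $\ab{R(t)}\leqslant C(d+1)\ab{\eit-1}$: each of $\ee^{(d+1)\ii t}-1$, $\deit-1$, and $\eit-1$ factors as $\eit-1$ times a geometric sum of modulus at most $d+1$, $d$, and $1$, so the numerator of $R(t)$ is bounded by $C(d+1)\ab{\eit-1}$ once one uses (\ref{condition}) and $\ab{\w{\varDelta}(t)}\leqslant 1+\gamma\ab{\w{A}_1(t)}\leqslant C$ (the bound $\ab{\w{A}_1(t)}\leqslant C$ following from $\ab{\w{\varPsi}(t)-1}\leqslant 2/(1-\beta)$ as in the proof of Lemma~\ref{lambda_1}); moreover $2\w{\varDelta}(t)-1+\gamma-\beta\eit$ differs from $\sqrt{\w{D}(t)}$ only by $O(\gamma^2)$, because $\sqrt{\w{D}(t)}=2\w{\varLambda}_1(t)-1+\gamma-\beta\eit$ and $\w{\varLambda}_1(t)-\w{\varDelta}(t)=O(\gamma^2)$ by Corollary~\ref{sqrtD_long_corr}, so by (\ref{sqrtD2}) it has modulus at least $0.6-C\gamma^2\geqslant C>0$. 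Second, $\ab{\w{\varDelta}_1(t)-\w{A}(t)}\leqslant C\gamma^3$: since $\alpha\geqslant C_2$ both Corollary~\ref{sqrtD_long_corr} and Corollary~\ref{sqrtD_long_corr_2} are available, and combined (using $\ab{1-Re\,\w{H}(t)}\leqslant 2$ and $\alpha\leqslant C_0$ to absorb $\gamma^4\big((1-Re\,\w{H}(t))^2+\alpha^4\big)$ into $C\gamma^3$) they yield $\w{A}(t)=\w{\varLambda}_1(t)+C\theta\gamma^3=\w{\varDelta}_1(t)+C\theta\gamma^3$. Third, the two remaining denominator factors are bounded below: $\ab{\w{A}(t)-\deit}\geqslant C\gamma(1-Re\,\w{H}(t)+\alpha)\geqslant C\gamma$ by (\ref{A-deit}) together with $Re\,\w{H}(t)\leqslant 1$ and $\alpha\geqslant C_2$, while $\ab{\w{\varDelta}_1(t)-\deit}\geqslant\ab{\deit}-\ab{\w{\varDelta}_1(t)}=1-\ab{\w{\varDelta}_1(t)}\geqslant C\gamma$ by the second assertion of Lemma~\ref{At}.

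Assembling the four bounds gives
\[
\ab{\w{V}(t)-\w{V}_1(t)}\leqslant\frac{C(d+1)\ab{\eit-1}\cdot C\gamma^3}{C\gamma\cdot C\gamma}=C(d+1)\gamma\ab{\eit-1},
\]
and one concludes by adding the estimate of Lemma~\ref{w1}. The delicate point is the bookkeeping of powers of $\gamma$: the numerator gain $\gamma^3$ must dominate the $\gamma^2$ lost from the two denominator factors of size $\gamma$, which is precisely why the third-order expansion $\w{\varLambda}_1(t)=\w{\varDelta}_1(t)+O(\gamma^3)$ of Corollary~\ref{sqrtD_long_corr_2}, valid only under $\alpha\geqslant C_2$, enters the argument here — the polynomial truncation $\w{\varDelta}_1$ approximates $\w{\varLambda}_1$ only to this order, unlike $\w{A}$ used in Lemma~\ref{w1}.
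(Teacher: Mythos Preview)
Your proof is correct, but it follows a somewhat different route from the paper's. The paper compares $\w{W}_1(t)$ to $\w{V}_1(t)$ directly: it records the three bounds $\ab{\w{\varLambda}_1(t)-\deit}\geqslant C\gamma$ (from (\ref{l-deit}) and $\alpha\geqslant C_2$), $\ab{\w{\varLambda}_1(t)-\w{\varDelta}_1(t)}\leqslant C\gamma^3$ (from Corollary~\ref{sqrtD_long_corr_2}), and $\ab{\w{\varDelta}_1(t)-\deit}\geqslant C\gamma$ (from Lemma~\ref{At}), and then declares the rest ``easy to check''. That direct comparison has to handle changes in \emph{both} the numerator (where $\w{W}_1$ carries $\w{\varLambda}_1$ but $\w{V}_1$ carries $\w{\varDelta}$) and both denominator factors simultaneously. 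Your triangle-inequality route through $\w{V}$ is cleaner in one respect: since $\w{V}$ and $\w{V}_1$ share the numerator and the factor $2\w{\varDelta}(t)-1+\gamma-\beta\eit$, only a single denominator factor changes, and the algebra reduces to the one-line identity you wrote for $\w{V}-\w{V}_1$. The price is that you must invoke Lemma~\ref{w1} as a black box and bound $\ab{\w{A}(t)-\deit}$ rather than $\ab{\w{\varLambda}_1(t)-\deit}$, but that costs nothing under $\alpha\geqslant C_2$. The $\gamma^3/\gamma^2$ bookkeeping you flag is exactly the mechanism the paper exploits too; the two arguments are equivalent in strength and in the ingredients they require.
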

\begin{proof}
Since $\alpha\geqslant C_2$,
\[
\ab{\w{\varLambda}_1(t)-\deit}\geqslant C\gamma\bigl(1-Re
\w{H}(t)+\alpha \bigr)\geqslant C\gamma(0+C_2)\geqslant C\gamma.
\]
From Corollary \ref{sqrtD_long_corr_2} it follows that
\begin{align*}
\ab{\w{\varLambda}_1(t)-\w{\varDelta}_1} =C
\gamma^3.
\end{align*}
Also, from Lemma \ref{At} it follows that
\begin{equation}
\label{V1_vard} \ab{\w{\varDelta}_1-\deit} \geqslant1 - (1-C\gamma) = C
\gamma.
\end{equation}
Hence, it is easy to check that the inequality of the lemma is correct.
\end{proof}

%Lemma about (w1-v1)*lambda^n
%
\begin{lemma}\label{w1-v1}
Let condition (\ref{condition}) hold, $\ab{ t } \leqslant\pi$. Then
\begin{eqnarray}
\int_{-\pi}^{\pi}\frac{\ab{\w{\varLambda}_1(t)}^n\ab{\w{W}_1(t)-\w
{V}(t)}}{\ab{\eit-1}} \DD t\!\!\!\!&\leqslant&\!\!\!\!  C(d+1)\sqrt{\frac{\gamma}{n}}\ee^{-Cn\gamma\alpha},
\\
\int_{-\pi}^{\pi}\ab{\w{\varLambda}_1(t)}^n
\ab{\w{W}_1(t)-\w{V}(t)} \DD t\!\!\!\!&\leqslant&\!\!\!\! C(d+1)
\frac{\ee^{-Cn\gamma\alpha}}{n}.
\end{eqnarray}
\end{lemma}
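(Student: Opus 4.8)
The plan is to bound the two integrals by combining the pointwise estimate from Lemma~\ref{w1} with the exponential control on $|\w{\varLambda}_1(t)|$ from Lemma~\ref{lambda_1}. From Lemma~\ref{w1} we have $\ab{\w{W}_1(t)-\w{V}(t)}\leqslant C(d+1)\gamma\ab{\eit-1}$, so in the first integral the factor $\ab{\eit-1}$ in the numerator cancels the $\ab{\eit-1}$ in the denominator, and in the second integral it is simply bounded by $2$. Hence both integrals are at most $C(d+1)\gamma\int_{-\pi}^{\pi}\ab{\w{\varLambda}_1(t)}^n\,\DD t$, and everything reduces to estimating this last integral.

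Next I would use Lemma~\ref{lambda_1} in the exponential form, $\ab{\w{\varLambda}_1(t)}^n\leqslant\exp\{0.4n(1-\alpha)\gamma\,Re(\w H(t)-1)-0.2n\alpha\gamma\}$. The constant factor $\ee^{-0.2n\alpha\gamma}$ pulls out as $\ee^{-Cn\gamma\alpha}$, matching the required exponential. What remains is $\int_{-\pi}^{\pi}\exp\{Cn(1-\alpha)\gamma\,Re(\w H(t)-1)\}\,\DD t$. Since $\w H$ is the Fourier transform of a shifted geometric distribution, $Re(\w H(t)-1)$ is negative and behaves like $-Ct^2$ for $t$ near $0$ (and is bounded away from $0$ for $t$ bounded away from $0$); one checks $1-Re\w H(t)\geqslant C t^2$ on $[-\pi,\pi]$ using the explicit formula $\w H(t)=(1-\beta)\eit/(1-\beta\eit)$ together with $\beta\leqslant0.15$. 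This gives the Gaussian-type bound $\int_{-\pi}^{\pi}\ee^{-Cn(1-\alpha)\gamma t^2}\,\DD t\leqslant C/\sqrt{n(1-\alpha)\gamma}\leqslant C/\sqrt{n\gamma}$, using $1-\alpha\geqslant 1-C_0>0$. Combining, the first integral is $\leqslant C(d+1)\gamma\cdot\ee^{-Cn\gamma\alpha}\cdot C/\sqrt{n\gamma}=C(d+1)\sqrt{\gamma/n}\,\ee^{-Cn\gamma\alpha}$, as claimed.

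For the second integral the same steps apply, except without the $1/\ab{\eit-1}$ weight we only use $\ab{\w W_1(t)-\w V(t)}\leqslant C(d+1)\gamma\ab{\eit-1}\leqslant C(d+1)\gamma$, which already contributes one factor of $\gamma$; then $\gamma\int_{-\pi}^{\pi}\ee^{-Cn(1-\alpha)\gamma t^2}\,\DD t\leqslant \gamma\cdot C/\sqrt{n\gamma}$. To sharpen this to the stated $\ee^{-Cn\gamma\alpha}/n$ I would instead keep the $\ab{\eit-1}$ factor and use $\ab{\eit-1}\leqslant C\ab{t}$, then estimate $\gamma\int_{-\pi}^{\pi}\ab{t}\,\ee^{-Cn(1-\alpha)\gamma t^2}\,\DD t$: the substitution $u=\sqrt{n\gamma}\,t$ turns this into $\gamma\cdot\frac{1}{n\gamma}\int\ab{u}\ee^{-Cu^2}\,\DD u=C/n$. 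Multiplying by the pulled-out $\ee^{-Cn\gamma\alpha}$ and the $C(d+1)$ gives the second bound.

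The main obstacle is the clean justification of the lower bound $1-Re\w H(t)\geqslant Ct^2$ uniformly on $[-\pi,\pi]$ and the handling of the regime where $n\gamma$ is small: when $n\gamma\leqslant 1$ the Gaussian bound $C/\sqrt{n\gamma}$ is larger than the trivial bound $2\pi$, so one must note that in that regime $\sqrt{\gamma/n}\geqslant\gamma/\sqrt{n\gamma}\cdot\sqrt{n\gamma}$... more simply, one observes $\gamma\cdot C/\sqrt{n\gamma}=C\sqrt{\gamma/n}$ is the correct comparison in all regimes, and for the second integral $C/n$ always dominates the product of $\gamma$ with the trivial length $2\pi$ only when $n\gamma\gtrsim 1$, so a short separate argument (using $\gamma\leqslant 0.05$ and $\ee^{-Cn\gamma\alpha}$ absorbing constants) is needed for small $n$. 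I would dispatch this by treating $n\gamma\geqslant 1$ and $n\gamma<1$ separately, the latter being immediate from boundedness of all quantities and $\gamma\leqslant C/n\cdot(n\gamma)\leqslant C/n$ on that range.
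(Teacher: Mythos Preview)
Your proof is correct and follows essentially the same route as the paper: apply Lemma~\ref{w1} to cancel (or retain) the factor $\ab{\eit-1}$, use Lemma~\ref{lambda_1} to pull out $\ee^{-Cn\gamma\alpha}$ and to bound $\ab{\w\varLambda_1(t)}^n$ by a Gaussian-type weight in $t$, and then integrate. The paper phrases the last step via the explicit identity $Re(\w H(t)-1)=(1+\beta)(\cos t-1)/\ab{1-\beta\eit}^2=-2C\sin^2(t/2)$ together with the standard inequality $\int_{-\pi}^{\pi}\ab{\sin(t/2)}^k\ee^{-2\lambda\sin^2(t/2)}\,\DD t\leqslant C(k)\lambda^{-(k+1)/2}$ (with $k=0$ for the first integral and $k=1$ for the second), which is exactly your $t^2$-and-substitution argument in different clothing.

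Your final paragraph's case analysis for small $n\gamma$ is unnecessary: the inequalities $\int_{-\pi}^{\pi}\ee^{-\lambda t^2}\,\DD t\leqslant C\lambda^{-1/2}$ and $\int_{-\pi}^{\pi}\ab{t}\,\ee^{-\lambda t^2}\,\DD t\leqslant C\lambda^{-1}$ hold for \emph{all} $\lambda>0$, since for small $\lambda$ the left-hand sides are bounded by $2\pi$ and $\pi^2$ while the right-hand sides blow up. Hence $\gamma\cdot C(n\gamma)^{-1/2}=C\sqrt{\gamma/n}$ and $\gamma\cdot C(n\gamma)^{-1}=C/n$ are valid uniformly, and no separate treatment of $n\gamma<1$ is needed.
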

\begin{proof}

It is obvious that
\begin{eqnarray}
\label{ReH-1} Re\w{H}(t)-1=\frac{(1+\beta)(\cos(t)-1)}{\ab{1-\beta\eit}^2}=-2C\sin^2(t/2).
\end{eqnarray}

We will use the following simple inequality
% frequently used inequality (see, for example, \cite{C16}, 16 p.):
%
\begin{equation}
\label{exp_sin} \int_{-\pi}^{\pi}\ab{
\sin(t/2)}^k\exp\bigl\{-2\lambda\sin^2(t/2)\bigr\}\DD t
\leqslant C(k)\lambda^{-(k+1)/2}.\vadjust{\goodbreak}
\end{equation}

By applying Lemma \ref{lambda_1}, Lemma \ref{w1}, (\ref{ReH-1}), and
(\ref{exp_sin}), we get
\begin{align*}
&\int_{-\pi}^{\pi}
\frac{\ab{\w{\varLambda
}_1(t)}^n\ab{\w{W}_1(t)-\w{V}(t)}}{\ab{\eit-1}} \DD t
\\
&\quad \leqslant\int_{-\pi}^{\pi}C(d+1)\gamma\exp\bigl\{n
\bigl(0.4(1-\alpha)\gamma\bigl(Re \w{H}(t)-1\bigr)-0.2\gamma\alpha\bigr)\bigr\}
\DD t
\\
&\quad \leqslant\int_{-\pi}^{\pi}C(d+1)\gamma\exp\bigl\{C n
\gamma\bigl(Re\w{H}(t)-1\bigr)\bigr\} \ee^{-Cn\gamma\alpha}\DD t
\\
&\quad \leqslant C(d+1)\sqrt{\frac{\gamma}{n}}\ee^{-Cn\gamma\alpha}.
\end{align*}

The second inequality of the lemma is proved similarly.
\end{proof}

%%%%%% Su a>C
%
\begin{lemma}\label{w1-v1_a}
Let condition (\ref{condition}) hold, $\alpha\geqslant C_2$, $\ab{ t }
\leqslant\pi$. Then
\[
\int_{-\pi}^{\pi}\frac{\ab{\w{\varLambda}_1(t)}^n\ab{\w{W}_1(t)-\w
{V}_1(t)}}{\ab{\eit-1}} \DD t\leqslant
C(d+1)\gamma\ee^{-Cn\gamma}.
\]
\end{lemma}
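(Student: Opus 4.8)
The plan is to reduce the estimate to a one-line computation by invoking the bounds already established. First, by Lemma~\ref{w1_a} (which applies precisely because $\alpha\geqslant C_2$), we have $\ab{\w{W}_1(t)-\w{V}_1(t)}\leqslant C(d+1)\gamma\ab{\eit-1}$, so the factor $\ab{\eit-1}$ in the denominator of the integrand cancels and the singularity at $t=0$ disappears. Hence
\[
\int_{-\pi}^{\pi}\frac{\ab{\w{\varLambda}_1(t)}^n\ab{\w{W}_1(t)-\w{V}_1(t)}}{\ab{\eit-1}}\,\DD t\leqslant C(d+1)\gamma\int_{-\pi}^{\pi}\ab{\w{\varLambda}_1(t)}^n\,\DD t.
\]

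Next I would bound $\ab{\w{\varLambda}_1(t)}^n$ using Lemma~\ref{lambda_1}: $\ab{\w{\varLambda}_1(t)}\leqslant\exp\{0.4(1-\alpha)\gamma\,Re(\w{H}(t)-1)-0.2\alpha\gamma\}$. By~(\ref{ReH-1}) we have $Re(\w{H}(t)-1)=-2C\sin^2(t/2)\leqslant0$, and since $1-\alpha\geqslant0$, the first term in the exponent is nonpositive; therefore $\ab{\w{\varLambda}_1(t)}^n\leqslant\ee^{-0.2n\alpha\gamma}$, and using $\alpha\geqslant C_2$ this is $\leqslant\ee^{-Cn\gamma}$ uniformly in $t$. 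Consequently $\int_{-\pi}^{\pi}\ab{\w{\varLambda}_1(t)}^n\,\DD t\leqslant2\pi\,\ee^{-Cn\gamma}$, and combining with the display above yields the claimed bound $C(d+1)\gamma\,\ee^{-Cn\gamma}$.

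There is no genuine obstacle here: the content is entirely carried by Lemma~\ref{w1_a} (for the cancellation of the $\ab{\eit-1}$ factor, where $\alpha\geqslant C_2$ is what makes $\ab{\w{\varLambda}_1(t)-\deit}$ and $\ab{\w{\varDelta}_1-\deit}$ bounded below by $C\gamma$) and Lemma~\ref{lambda_1} (for the exponential decay, which comes from the $-0.2\alpha\gamma$ term being bounded away from zero thanks to $\alpha\geqslant C_2$). One could alternatively retain the factor $\exp\{Cn\gamma(Re\w{H}(t)-1)\}$ and apply~(\ref{exp_sin}) to extract an additional $\sqrt{\gamma/n}$, exactly as in the proof of Lemma~\ref{w1-v1}, but this refinement is not needed for the stated estimate.
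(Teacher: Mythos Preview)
Your proposal is correct and follows essentially the same approach as the paper: apply Lemma~\ref{w1_a} to cancel the $\ab{\eit-1}$ factor, then use Lemma~\ref{lambda_1} together with $\alpha\geqslant C_2$ to bound $\ab{\w{\varLambda}_1(t)}^n\leqslant\exp\{-0.2C_2 n\gamma\}$ uniformly in $t$, and integrate. The paper's proof is just a two-line version of exactly this argument.
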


\begin{proof}
From Lemma \ref{lambda_1} and Lemma \ref{w1_a} it follows that
\begin{align*}
\int_{-\pi}^{\pi}\frac{\ab{\w{\varLambda}_1(t)}^n\ab{\w{W}_1(t)-\w
{V}_1(t)}}{\ab{\eit-1}} \DD t &
\leqslant\int_{-\pi}^{\pi}C(d+1)\gamma\exp
\{-0.2C_2\gamma n\}\DD t
\\
&\leqslant C(d+1)\gamma\ee^{-Cn\gamma}.\qedhere
\end{align*}
\end{proof}

%Lemma about v1*(lambda^n-G^n)
%
\begin{lemma}\label{G}
Let condition (\ref{condition}) hold, $\ab{ t } \leqslant\pi$. Then
\begin{eqnarray}
\int_{-\pi}^{\pi}\frac{\ab{\w{V}(t)}\ab{\w{\varLambda}_1^n(t)-\w
{G}^n(t)}}{\ab{\eit-1}} \DD t\!\!\!\!&\leqslant&\!\!\!\!
C(d+1)\gamma\sqrt{\frac{\gamma}{n}}\ee^{-Cn\gamma\alpha},
\\
\int_{-\pi}^{\pi}\ab{\w{V}(t)}\ab{\w{
\varLambda}_1^n(t)-\w{G}^n(t)} \DD t
\!\!\!\!&\leqslant&\!\!\!\! C(d+1)\frac{\gamma}{n}\ee^{-Cn\gamma\alpha}.
\end{eqnarray}
\end{lemma}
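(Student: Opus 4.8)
The plan is to exploit the telescoping identity for the difference of $n$-th powers together with the product-of-norms bounds available for characteristic functions. First I would write
\[
\w{\varLambda}_1^n(t)-\w{G}^n(t)=\bigl(\w{\varLambda}_1(t)-\w{G}(t)\bigr)\sum_{j=0}^{n-1}\w{\varLambda}_1^j(t)\,\w{G}^{n-1-j}(t),
\]
so that the integrand is controlled once we have (i) a pointwise bound on $\ab{\w{\varLambda}_1(t)-\w{G}(t)}$ and (ii) a uniform pointwise bound on $\ab{\w{\varLambda}_1(t)}$ and $\ab{\w{G}(t)}$ of the form $\exp\{C\gamma(Re\,\w{H}(t)-1-\alpha)\}$. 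For (ii) the bound on $\w{\varLambda}_1$ is exactly Corollary~\ref{lambda_1_corr}; the analogous bound on $\ab{\w{G}(t)}$ follows by taking the modulus of $\exp\{\w{A}(t)-1-\dots\}$, using $\ab{\ee^z}=\ee^{Re\,z}$ together with Lemma~\ref{At} (which gives $Re(\w{A}(t)-1)\le C\gamma(Re\,\w{H}(t)-1-\alpha)$) and noting that the correction terms $\gamma^2,\gamma^3$ contribute only lower-order quantities that can be absorbed. Consequently each of the $n$ summands is bounded by $C\,\ee^{-C(n-1)\gamma\alpha}\exp\{C(n-1)\gamma(Re\,\w{H}(t)-1)\}$, and the sum by $n$ times that.

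For (i), the natural route is: $\w{G}(t)=\exp\{\w{A}(t)-1+r(t)\}$ with $r(t)$ collecting the $O(\gamma^2)$ and $O(\gamma^3)$ polynomial corrections, so $\w{G}(t)=1+(\w{A}(t)-1)+O\big(\ab{\w{A}(t)-1}^2\big)+O(\gamma^2)$, i.e. $\w{G}(t)=\w{A}(t)+C\theta\gamma^2\big((1-Re\,\w{H}(t))^2+\alpha^2\big)+\cdots$. Matching this against Corollary~\ref{sqrtD_long_corr}, which says $\w{\varLambda}_1(t)=\w{A}(t)+C\theta\gamma^4\big((1-Re\,\w{H}(t))^2+\alpha^4\big)$, the $\w{A}(t)$ terms cancel and one is left with $\ab{\w{\varLambda}_1(t)-\w{G}(t)}\le C\gamma^2\big((1-Re\,\w{H}(t))^2+\alpha^2\big)$ — in fact, chasing the exact expansion of $\w{G}$ defined in Table~\ref{tab1} should reveal that the quadratic-in-$(\w{A}-1)$ terms are precisely cancelled by the $-\tfrac12(\w{A}_1^2\gamma^2+\dots)+\tfrac13\w{A}_1^3\gamma^3$ corrections, pushing the error to order $\gamma^3$. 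Either way, combined with the factor $\ab{\w{V}(t)}\le C(d+1)\ab{\eit-1}$ from Lemma~\ref{w2}-style reasoning (here the numerator of $\w{V}$ carries a factor $\ab{\eit-1}$ while the denominator is bounded below using \eqref{sqrtD2}, \eqref{l-deit}, \eqref{A-deit}), the $\ab{\eit-1}$ in the numerator cancels the $\ab{\eit-1}$ in the denominator of the first integral and leaves an extra $\ab{\eit-1}\sim\ab{\sin(t/2)}$ in the second.

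It then remains to integrate. Using $Re\,\w{H}(t)-1=-2C\sin^2(t/2)$ from \eqref{ReH-1}, the first integral is bounded by
\[
C(d+1)\,\gamma^2\,\ee^{-Cn\gamma\alpha}\int_{-\pi}^{\pi}\bigl(1-Re\,\w{H}(t)+\alpha\bigr)\,n\,\exp\bigl\{-Cn\gamma\sin^2(t/2)\bigr\}\,\DD t,
\]
and applying \eqref{exp_sin} with $k=0$ to the $\sin^2$ part (giving $(n\gamma)^{-1/2}$, hence with the surviving $n$ and one $\gamma$ a factor $\sqrt{\gamma/n}$) and with $k=0$ again to the bare $\alpha$-term (also $(n\gamma)^{-1/2}$, and $n\gamma\alpha$ is absorbed into the exponential with room to spare) yields $C(d+1)\gamma\sqrt{\gamma/n}\,\ee^{-Cn\gamma\alpha}$. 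The second integral carries an additional $\ab{\sin(t/2)}$, so \eqref{exp_sin} with $k=1$ gives $(n\gamma)^{-1}$ instead, producing $C(d+1)(\gamma/n)\,\ee^{-Cn\gamma\alpha}$. The main obstacle is the first bookkeeping step: verifying that $\w{G}(t)$ — as literally defined by the exponent in Table~\ref{tab1} — agrees with $\w{\varLambda}_1(t)$ up to the claimed order, since this requires expanding $\exp$ of the exact polynomial exponent and checking that the hand-crafted $-\tfrac12$ and $+\tfrac13$ corrections cancel the Taylor remainder; once that single pointwise estimate is in hand, the telescoping, the $\w{V}$ factor, and the integral bounds via \eqref{exp_sin} are routine.
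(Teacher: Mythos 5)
Your overall strategy — telescope $\w{\varLambda}_1^n-\w{G}^n$, bound $\ab{\w{\varLambda}_1}$ and $\ab{\w{G}}$ by the same compound-Poisson-type exponential, multiply by a pointwise bound on $\ab{\w{V}}$ and integrate via $x\ee^{-x}\leqslant1$ and \eqref{exp_sin} — is exactly the paper's, but the two key pointwise estimates you plug in are not right, and the error is not cosmetic.

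First, the claim $\ab{\w{V}(t)}\leqslant C(d+1)\ab{\eit-1}$ is false. The denominator of $\w{V}$ contains the factor $\w{A}(t)-\deit$, which is \emph{not} bounded away from zero: at $t=0$ one has $\w{A}(0)-1=-\alpha\gamma/(1+\gamma-\beta)+O(\gamma^2)$, and more generally \eqref{A-deit} gives only the lower bound $\ab{\w{A}(t)-\deit}\geqslant C\gamma(1-Re\,\w{H}(t)+\alpha)$. So the correct estimate — the one the paper actually uses — is
\[
\ab{\w{V}(t)}\leqslant\frac{C(d+1)\ab{\eit-1}}{\gamma(1-Re\,\w{H}(t)+\alpha)},
\]
with a $1/\gamma$ in front. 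You even cite \eqref{A-deit} yourself, but then bound the denominator below by a constant, which is incompatible with that inequality.

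Second, the guess $\ab{\w{\varLambda}_1(t)-\w{G}(t)}\leqslant C\gamma^3$ (or $C\gamma^2(\dots)$) underestimates the cancellation built into the definition of $\w{G}$. The hand-crafted $-\tfrac12(\w{A}_1^2\gamma^2+\dots)+\tfrac13\w{A}_1^3\gamma^3$ corrections in the exponent of $\w{G}$, together with Corollary~\ref{sqrtD_long_corr}, give the sharper estimate
\[
\ab{\w{\varLambda}_1(t)-\w{G}(t)}\leqslant C\gamma^4\bigl((1-Re\,\w{H}(t))^2+\alpha^4\bigr),
\]
and the extra factor $(1-Re\,\w{H}(t))^2+\alpha^4$ is essential: it is precisely what cancels the $(1-Re\,\w{H}(t)+\alpha)^{-1}$ coming from $\ab{\w{V}}$, leaving $\gamma^3\bigl(1-Re\,\w{H}(t)+1\bigr)$ in the integrand. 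Without that structure the two bounds do not mesh, and indeed your final arithmetic exposes the mismatch: $\gamma^2\cdot n\cdot(n\gamma)^{-3/2}=\sqrt{\gamma/n}$, which is a factor $\gamma$ short of the claimed $\gamma\sqrt{\gamma/n}$. The deeper point you have missed is that the lemma is a delicate interplay of three estimates — the $\gamma^{-1}(1-Re\,\w{H}+\alpha)^{-1}$ blow-up of $\w{V}$, the $\gamma^4((1-Re\,\w{H})^2+\alpha^4)$ smallness of $\w{\varLambda}_1-\w{G}$, and the exponential decay from Corollary~\ref{lambda_1_corr} — and you cannot replace either of the first two by a crude power of $\gamma$ without losing the result.
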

\begin{proof}
Notice that
\begin{eqnarray}
\label{V} \ab{\w{V}(t)}\!\!\!\!&\leqslant&\!\!\!\!\frac{C(d+1)\ab{\eit-1}}{\gamma(1-Re\w
{H}(t)+\alpha)},
\\
\ab{\w{\varLambda}_1^n(t)-\w{G}^n(t)}
\!\!\!\!&\leqslant&\!\!\!\!\ab{\w{\varLambda}_1(t)-\w {G}(t)}\cdot n \cdot\max\bigl\{
\ab{\w{\varLambda}_1(t)}^{n-1},\ab{\w {G}(t)}^{n-1}
\bigr\}.
\nonumber
\end{eqnarray}

From Corollary \ref{lambda_1_corr} we have
$\ab{\w{\varLambda}_1}\leqslant\exp\{C\gamma(Re\w{H}(t)-1-\alpha)\}$. Taking
into account that $\ab{\ee^{a+b\ii}}=\ee^a$, $\ab{\w{G}(t)}$ can be estimated
as
\[
\ab{\w{G}(t)}\leqslant\exp\bigl\{C\gamma\bigl(Re\w{H}(t)-1-\alpha\bigr)\bigr
\}.
\]

Using Corollary \ref{sqrtD_long_corr}, we have that
\begin{eqnarray}
\label{l-g} \ab{\w{\varLambda}_1(t)-\w{G}(t)}\!\!\!\!&=&\!\!\!\!\ab{\exp\{
\ln\w{\varLambda}_1(t)\}-\exp\{ \ln\w{G}(t)\}}
\nonumber
\\
\!\!\!\!&\leqslant&\!\!\!\! C\ab{\ln\w{\varLambda}_1(t)-\ln\w{G}(t)}
\nonumber
\\
\!\!\!\!&=&\!\!\!\!C\AB{\bigl(\w{\varLambda}_1(t)-1\bigr)-\frac{(\w{\varLambda}_1(t)-1)^2}{2}
\nonumber
\\
&&\!\!\!\!{}+\frac{(\w{\varLambda}_1(t)-1)^3}{3}+\frac{C\theta\ab{\w{\varLambda
}_1(t)-1}^4}{4}-\ln\w{G}(t)}
\nonumber
\\
\!\!\!\!&=&\!\!\!\!C\AB{\bigl(\w{A}(t)-1\bigr)-\frac{1}{2} \bigl(\w{A}_1^2(t)
\gamma^2+2\w{A}_1(t) \bigl(\w {A}_2(t)+
\w{A}_4(t)\bigr)\gamma^3 \bigr)
\nonumber
\\
&&\!\!\!\!{}+\frac{1}{3}\w{A}_1^3(t)\gamma^3+C
\theta\gamma^4\bigl(\bigl(1-Re\w {H}(t)\bigr)^2+
\alpha^4\bigr)-\ln{\w{G}(t)}}
\nonumber
\\
\!\!\!\!&\leqslant&\!\!\!\!C\gamma^4\bigl(\bigl(1-Re\w{H}(t)\bigr)^2+
\alpha^4\bigr).
\end{eqnarray}

By applying (\ref{V}), (\ref{l-g}), and the inequality $x\ee
^{-x}\leqslant1$,
for all $x>0$, we can estimate the following integral:
\begin{align}
&\int_{-\pi}^{\pi}
\frac{\ab{\w{V}(t)}\ab{\w
{\varLambda}_1^n(t)-\w{G}^n(t)}}{\ab{\eit-1}} \DD t
\nonumber
\\
&\quad \leqslant C(d+1)\int_{-\pi}^{\pi}n\exp\bigl\{nC\gamma
\bigl(Re\w{H}(t)-1-\alpha \bigr)\bigr\}\gamma^3\bigl(\bigl(1-Re
\w{H}(t)\bigr)+1\bigr)\DD t
\nonumber
\\
&\quad \leqslant C(d+1)\int_{-\pi}^{\pi}n
\gamma^3\frac{\exp\{n\cdot0.5C\gamma
(Re\w{H}(t)-1)\}}{n\cdot0.5C\gamma(-Re\w{H}(t)+1)}\ee^{-Cn\gamma
\alpha}\bigl(2-Re\w{H}(t)
\bigr)\DD t
\nonumber
\\
&\quad \leqslant C(d+1)\int_{-\pi}^{\pi}\gamma^2
\exp\bigl\{-2Cn\gamma\sin^2(t/2)\bigr\} \ee^{-Cn\gamma\alpha}\DD t
\nonumber
\\
&\quad \leqslant C(d+1)\gamma\sqrt{\frac{\gamma}{n}}\ee^{-Cn\gamma\alpha}.
\end{align}

The second inequality of this lemma is proved similarly.
\end{proof}

%%%%%% Su a>C
%
\begin{lemma}\label{G_a}
Let condition (\ref{condition}) hold, $\alpha\geqslant C_2$, $\ab{ t }
\leqslant\pi$. Then
\[
\int_{-\pi}^{\pi}\frac{\ab{\w{V}_1(t)}\ab{\w{\varLambda}_1^n(t)-\w
{G}_1^n(t)}}{\ab{\eit-1}} \DD t\leqslant
C(d+1)\gamma\ee^{-Cn\gamma}.
\]
\end{lemma}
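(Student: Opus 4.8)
The plan is to mirror the proof of Lemma \ref{G}, the key simplification being that the hypothesis $\alpha\geqslant C_2$ makes every exponential factor decay uniformly in $t$, so the careful tracking of the $(1-Re\w{H}(t))$-dependence carried out there becomes unnecessary. The two ingredients are an upper bound for $\ab{\w{V}_1(t)}$ and an estimate for $\ab{\w{\varLambda}_1^n(t)-\w{G}_1^n(t)}$, after which the integral collapses.

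For the first ingredient, read $\w{V}_1(t)$ off Table \ref{tab1}: every term of the numerator carries a factor $\ee^{(d+1)\ii t}-1$, $\deit-1$, or $\eit-1$, and since $\ab{(\ee^{(d+1)\ii t}-1)/(\eit-1)}\leqslant d+1$, $\ab{(\deit-1)/(\eit-1)}\leqslant d$, while the coefficients $\beta-\gamma(1-\alpha)$, $\gamma\w{\varDelta}(t)-\beta+\gamma(1-\alpha)$ and $\w{\varDelta}(t)$ are all bounded, the numerator is $O\bigl((d+1)\ab{\eit-1}\bigr)$. In the denominator $\ab{\w{\varDelta}_1(t)-\deit}\geqslant C\gamma$ by Lemma \ref{At} (this is (\ref{V1_vard})), and $2\w{\varDelta}(t)-1+\gamma-\beta\eit=1+\gamma-\beta\eit+2\w{A}_1(t)\gamma$ has modulus at least $1+\gamma-\beta-C\gamma\geqslant C$ because $\w{A}_1(t)$ is bounded and $\gamma$ is small. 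Hence $\ab{\w{V}_1(t)}\leqslant C(d+1)\ab{\eit-1}/\gamma$, exactly parallel to estimate (\ref{V}) for $\w{V}(t)$.

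For the second ingredient, compare $\w{\varLambda}_1(t)$ and $\w{G}_1(t)$ through their logarithms, as in (\ref{l-g}). By Corollary \ref{sqrtD_long_corr_2}, $\w{\varLambda}_1(t)-1=\w{A}_1(t)\gamma+(\w{A}_2(t)+\w{A}_4(t))\gamma^2+C\theta\gamma^3$, so expanding the logarithm gives $\ln\w{\varLambda}_1(t)=\w{A}_1(t)\gamma+\bigl(\w{A}_2(t)+\w{A}_4(t)-\frac{1}{2}\w{A}_1^2(t)\bigr)\gamma^2+C\theta\gamma^3$, whose first two terms are, by definition, $\ln\w{G}_1(t)$; since both functions are bounded this yields $\ab{\w{\varLambda}_1(t)-\w{G}_1(t)}\leqslant C\gamma^3$. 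From Corollary \ref{lambda_1_corr}, Lemma \ref{At}, and $\alpha\geqslant C_2$ one also gets $\ab{\w{\varLambda}_1(t)}\leqslant\ee^{-C\gamma}$ and $\ab{\w{G}_1(t)}\leqslant\ee^{-C\gamma}$ uniformly in $t$, so that
\[
\ab{\w{\varLambda}_1^n(t)-\w{G}_1^n(t)}\leqslant n\max\bigl\{\ab{\w{\varLambda}_1(t)},\ab{\w{G}_1(t)}\bigr\}^{n-1}\ab{\w{\varLambda}_1(t)-\w{G}_1(t)}\leqslant Cn\gamma^3\ee^{-Cn\gamma}.
\]

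Combining the two bounds, the factor $\ab{\eit-1}$ coming from $\w{V}_1$ cancels the $1/\ab{\eit-1}$ in the integrand, which is therefore at most $C(d+1)n\gamma^2\ee^{-Cn\gamma}$; integrating over $[-\pi,\pi]$ and then splitting one of the exponent constants and applying $x\ee^{-x}\leqslant1$ with $x=Cn\gamma$ reduces the bound to $C(d+1)\gamma\ee^{-Cn\gamma}$, which is the assertion. The only step requiring a little care is the bound on $\ab{\w{V}_1(t)}$ — checking that the common factor $\eit-1$ genuinely factors out of every numerator term and that both denominator factors stay bounded away from zero — but this repeats almost verbatim the reasoning already used for $\w{V}(t)$ in Lemma \ref{G} and in Lemma \ref{w1_a}, so no genuinely new difficulty arises; the content of the lemma is essentially routine once Corollary \ref{sqrtD_long_corr_2} and the uniform decay afforded by $\alpha\geqslant C_2$ are in hand.
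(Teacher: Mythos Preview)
Your proof is correct and follows essentially the same route as the paper: bound $\ab{\w{V}_1(t)}$ by $C(d+1)\ab{\eit-1}/\gamma$ (the paper's (\ref{V1})), control $\ab{\w{\varLambda}_1(t)-\w{G}_1(t)}$ by $C\gamma^3$ via logarithms and Corollary \ref{sqrtD_long_corr_2} (the paper's (\ref{l-g_a})), use the uniform decay $\ee^{-C\gamma}$ for both $\ab{\w{\varLambda}_1}$ and $\ab{\w{G}_1}$, and finish with $x\ee^{-x}\leqslant 1$. Your write-up is in fact somewhat more explicit than the paper's in justifying the $\w{V}_1$ bound, but the logical structure is identical.
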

\begin{proof}
Since $\alpha\geqslant C_2$,
\begin{equation}
\label{V1} \ab{\w{V}_1(t)}\leqslant\frac{C(d+1)\ab{\eit-1}}{\gamma},
\end{equation}
and
\begin{equation}
\ab{\w{\varLambda}_1^n(t)-\w{G}_1^n(t)}
\leqslant\ab{\w{\varLambda}_1(t)-\w {G}_1(t)}\cdot n
\cdot\exp\bigl\{-C\gamma(n-1)\bigr\}.
\end{equation}
$\ab{\w{\varLambda}_1(t)-\w{G}_1(t)}$ is estimated by applying Corollary
\ref{sqrtD_long_corr_2}:
\begin{eqnarray}
\label{l-g_a} \ab{\w{\varLambda}_1(t)-\w{G}_1(t)}
\!\!\!\!&\leqslant&\!\!\!\! C\ab{\ln\w{\varLambda}_1(t)-\ln \w{G}_1(t)}
\nonumber
\\
\!\!\!\!&=&\!\!\!\!C\AB{\bigl(\w{\varLambda}_1(t)-1\bigr)-\frac{(\w{\varLambda}_1(t)-1)^2}{2}+
\frac
{C\theta\ab{\w{\varLambda}_1(t)-1}^3}{3}-\ln\w{G}_1(t)}
\nonumber
\\
\!\!\!\!&=&\!\!\!\!C\AB{\w{A}_1(t)\gamma+\bigl(\w{A}_2(t)+
\w{A}_4(t)\bigr)\gamma^2-\frac{1}{2}\w
{A}_1^2(t)\gamma^2
\nonumber
\\*
&&\!\!\!\!{}+C\theta\gamma^3-\ln{\w{G}_1(t)}}
\nonumber
\\*
\!\!\!\!&\leqslant&\!\!\!\!C\gamma^3.
\end{eqnarray}

By applying (\ref{V1}), (\ref{l-g_a}), and the inequality
$x\ee^{-x}\leqslant1$, for all $x>0$, we can estimate the following integral:
\begin{align*}
\int_{-\pi}^{\pi}\frac{\ab{\w{V}_1(t)}\ab{\w{\varLambda}_1^n(t)-\w
{G}_1^n(t)}}{\ab{\eit-1}} \DD t &\leqslant
C(d+1)\int_{-\pi}^{\pi}n\gamma^2\exp\{-nC
\gamma\}\DD t
\\
&\leqslant C(d+1)\int_{-\pi}^{\pi}n
\gamma^2\frac{\exp\{-n0.5C\gamma\}
}{n 0.5C\gamma}\DD t
\\
&\leqslant C(d+1)\gamma\ee^{-Cn\gamma}.\qedhere
\end{align*}
\end{proof}

%%%%%Nelygyb{\chr"C4}{\chr"97}s
%%%%%%%%%%%%%%%
%
\begin{lemma}\label{nelygybes}
Let condition (\ref{condition}) hold, $\alpha\geqslant C_2$,
$\ab{t}\leqslant\pi$. Then
\begin{align*}
\ab{\w W_1(t)}&\leqslant\frac{C(d+1)}{\gamma},\qquad&\ab{\w
W_1'(t)} &\leqslant\frac{C(d+1)(1+\beta/\gamma)}{\gamma},
\\
\ab{\w W_2(t)}&\leqslant C(d+1),\qquad&\ab{\w W_2'(t)}&
\leqslant C(d+1),
\\
\ab{\w V_2(t)}&\leqslant\frac{C(d+1)}{\gamma},\qquad&\ab{\w
V_2'(t)}&\leqslant\frac{C(d+1)(1+\beta/\gamma)}{\gamma},
\\
\ab{\w W_1(t)-\w V_2(t)}&\leqslant C(d+1)\gamma,\quad&
\ab{\w W_1'(t)-\w V_2'(t)}&
\leqslant C(d+1)\gamma(1+\beta/\gamma),
\\
\ab{\w\varLambda_1(t)}&\leqslant\ee^{-C\gamma},\quad&\ab{\w
G_1(t)}&\leqslant\ee^{-C\gamma},
\\
\ab{\w\varLambda_1'(t)}&\leqslant C\gamma,\quad&\ab{
\w G_1'(t)}&\leqslant C\gamma,
\\
\ab{\w\varLambda_2(t)}&\leqslant\beta+4\gamma,\quad&\ab{\w\varLambda
_2'(t)}&\leqslant C(\beta+4\gamma),
\\
\ab{\w\varLambda_1(t)-\w G_1(t)}&\leqslant C
\gamma^3,\quad&\ab{(\w\varLambda _1^n(t)-
\w G_1^n(t))'}&\leqslant C
\gamma^2\ee^{-Cn\gamma},
\\
\frac{\ab{1-\ee^{d\ii t}}}{\ab{\w\varLambda_1(t)-\ee^{d\ii t}}}&\leqslant C, \quad&\frac{\ab{1-\ee^{d\ii t}}}{\ab{\w{\varDelta}_1(t)-\ee^{d\ii
t}}}&\leqslant C.
\end{align*}
\end{lemma}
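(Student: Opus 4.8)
The plan is to verify the twenty‑odd inequalities in three blocks — the eigenvalue‑type factors $\w\varLambda_1,\w\varLambda_2,\w G_1$; the denominators $\w\varLambda_1(t)-\deit$ and $\w\varDelta_1(t)-\deit$; and the weight factors $\w W_1,\w W_2,\w V_2$ — reusing Lemmas~\ref{sqrtD_short}--\ref{w1_a} and their corollaries throughout. Several of the stated bounds are not tight, so most of the work is bookkeeping. \emph{The eigenvalue‑type factors.} $\ab{\w\varLambda_2(t)}\le\beta+4\gamma$ and $\ab{\w\varLambda_1(t)-\w G_1(t)}\le C\gamma^3$ are exactly Lemma~\ref{lambda_2} and inequality~(\ref{l-g_a}); $\ab{\w\varLambda_1(t)}\le\ee^{-C\gamma}$ follows from Lemma~\ref{lambda_1} and $\alpha\ge C_2$ (since $Re(\w H(t)-1)\le0$), and $\ab{\w G_1(t)}\le\ee^{-C\gamma}$ from $\ln\w G_1=\ln\w\varDelta_1+\tfrac12\w A_1^2\gamma^2+C\theta\gamma^4$, $Re\ln\w\varDelta_1\le-C\gamma$ (the second part of Lemma~\ref{At}) and smallness of $\gamma$. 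For the derivatives I use $2\w\varLambda_{1,2}=1-\gamma+\beta\eit\pm\sqrt{\w D(t)}$: differentiating the short expansion~(\ref{sqrtD}) term by term (each summand a bounded rational function of $\eit$ with denominator $\ge0.85$ in modulus) gives $(\sqrt{\w D}(t))'=-\ii\beta\eit+C\theta(\beta+\gamma)$, hence $\ab{\w\varLambda_2'(t)}\le C(\beta+4\gamma)$; the second assertion of Lemma~\ref{sqrtD_long} gives $\w\varLambda_1'=\w\varDelta_1'+C\theta\gamma^3$ with $\w\varDelta_1'=\w A_1'\gamma+(\w A_2'+\w A_4')\gamma^2=C\theta\gamma$, hence $\ab{\w\varLambda_1'(t)}\le C\gamma$, and likewise $\ab{\w G_1'(t)}\le C\gamma$ by differentiating the exponent of $\w G_1$; the same computation yields $\w\varLambda_1'-\w G_1'=C\theta\gamma^3$. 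Then $(\w\varLambda_1^n-\w G_1^n)'=n(\w\varLambda_1^{n-1}-\w G_1^{n-1})\w\varLambda_1'+n\w G_1^{\,n-1}(\w\varLambda_1'-\w G_1')$, and with $\ab{\w\varLambda_1^{n-1}-\w G_1^{n-1}}\le Cn\gamma^3\ee^{-Cn\gamma}$, $\ab{\w G_1}^{n-1}\le\ee^{-Cn\gamma}$, $\ab{\w\varLambda_1'}\le C\gamma$ and $x^2\ee^{-x}\le C$ to absorb the polynomial factors into a fraction of the exponential, one gets $\ab{(\w\varLambda_1^n(t)-\w G_1^n(t))'}\le C\gamma^2\ee^{-Cn\gamma}$.

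\emph{The denominators.} The two reciprocal estimates reduce to the single claim $\ab{\w\varLambda_1(t)-\deit}\ge C(\gamma+\ab{\deit-1})$ (and its analogue with $\w\varDelta_1$), since then $\ab{1-\deit}/\ab{\w\varLambda_1(t)-\deit}\le\ab{1-\deit}/(C(\gamma+\ab{\deit-1}))\le C^{-1}$. To prove the claim I split on $u:=1-\cos(dt)$. If $u\le c\gamma/2$, then $\deit$ is close to $1$ and, by Lemma~\ref{lambda_1}, $Re\,\w\varLambda_1(t)\le1-c\gamma$, so $Re(\w\varLambda_1(t)-\deit)=Re\,\w\varLambda_1(t)-(1-u)\le-c\gamma/2$, giving $\ab{\w\varLambda_1(t)-\deit}\ge c\gamma/2\ge C(\gamma+\ab{\deit-1})$ (note $\ab{\deit-1}^2=2u$). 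If $u>c\gamma/2$, then $\ab{\deit-1}$ dominates $\ab{\w\varLambda_1(t)-1}$, which is $\le C\gamma$ by Corollary~\ref{lambda_1_corr} and $\ge c_1\gamma$ (from $\ab{\w\varPsi(t)-1}\ge\alpha/(1+\beta)\ge C_2/1.15$ via the expansion $\w\varLambda_1=1+\w A_1(t)\gamma+C\theta\gamma^2$ of Corollary~\ref{sqrtD_long_corr_2}), and $\ab{\w\varLambda_1(t)-\deit}\ge\ab{\deit-1}-\ab{\w\varLambda_1(t)-1}\ge\tfrac12\ab{\deit-1}\ge\tfrac14(\ab{\deit-1}+c\gamma)$ finishes the claim after matching constants. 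The $\w\varDelta_1$‑version is identical, using the second part of Lemma~\ref{At} in place of Lemma~\ref{lambda_1} and $\w\varDelta_1=1+\w A_1(t)\gamma+C\theta\gamma^2$.

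\emph{The weight factors.} Re‑grouping the numerator in the $\w W_1$‑formula of Lemma~\ref{lema_1} gives $\w W_1(t)=\frac{N_1(t)}{(\w\varLambda_1(t)-\deit)\sqrt{\w D(t)}}$ with $N_1(t)=(\deit-1)(\eit(\beta-\gamma(1-\alpha))-\w\varLambda_1(t))+(\eit-1)\gamma\w\varLambda_1(t)$, so $\ab{N_1(t)}\le C(\ab{\deit-1}+\gamma)$ and $\ab{N_1'(t)}\le Cd+C(\ab{\deit-1}+\gamma)$: because the factor $\eit-1$ stays attached to $\deit-1$, the large Dirichlet sum $\sum_{j<d}j\ee^{j\ii t}$ never appears, only $\ii d\deit$ does. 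Combining $\ab{N_1}\le C(\ab{\deit-1}+\gamma)$, the denominator bound $\ab{(\w\varLambda_1-\deit)\sqrt{\w D}}\ge C(\gamma+\ab{\deit-1})$ of the previous block and $\ab{\sqrt{\w D}}\ge0.6$ from~(\ref{sqrtD2}) gives $\ab{\w W_1(t)}\le C\le C(d+1)/\gamma$; differentiating the fraction and using in addition $\ab{(\w\varLambda_1-\deit)'}\le C\gamma+d$ and $\ab{(\sqrt{\w D})'}\le C(\beta+\gamma)$ gives $\ab{\w W_1'(t)}\le C(d+1)(1+\beta/\gamma)/\gamma$ — the $\beta/\gamma$ enters only via the derivative of the summand $(\eit-1)[\gamma\w\varLambda_1-\beta+\gamma(1-\alpha)]$, which is $C\theta(\beta+\gamma)$ over a denominator of size $\gtrsim\gamma$. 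The estimates for $\w W_2,\w W_2'$ are the same computation with $\w\varLambda_2$ for $\w\varLambda_1$, using Corollary~\ref{lambda_2_corr} and~(\ref{l_ekit}) as in Lemma~\ref{w2}; those for $\w V_2,\w V_2'$ are the same with $\w\varDelta_1$, $2\w\varDelta_1-1+\gamma-\beta\eit$ (which equals $\sqrt{\w D}+C\theta\gamma^3$, hence $\ge0.5$ in modulus) and $1+\w A_1\gamma$ in place of $\w\varLambda_1$, $\sqrt{\w D}$, $\w\varLambda_1$, using the $\w\varDelta_1$‑denominator bound and Lemma~\ref{At}. Finally, for $\w W_1-\w V_2$ I write it as $\frac{N_1 D_\varDelta-N_\varDelta D_\varLambda}{D_\varLambda D_\varDelta}$ with $D_\varLambda=(\w\varLambda_1-\deit)\sqrt{\w D}$, $D_\varDelta=(\w\varDelta_1-\deit)(2\w\varDelta_1-1+\gamma-\beta\eit)$, and the analogous $N_\varDelta$; since $\w\varLambda_1-\w\varDelta_1=C\theta\gamma^3$ (Corollary~\ref{sqrtD_long_corr_2}), $\sqrt{\w D}-(2\w\varDelta_1-1+\gamma-\beta\eit)=C\theta\gamma^3$ (Lemma~\ref{sqrtD_long}) and $\w\varLambda_1-(1+\w A_1\gamma)=C\theta\gamma^2$, the numerator is $C\theta\min\{1,d\ab{\eit-1}\}\gamma^2$, the denominator has modulus $\ge C(\gamma+\ab{\deit-1})^2$, and using $\min\{1,d\ab{\eit-1}\}\le\min\{C,C(d+1)\ab{\eit-1}\}$ to absorb the worst case $\ab{\deit-1}\to0$ gives $\ab{\w W_1(t)-\w V_2(t)}\le C(d+1)\gamma\ab{\eit-1}$; differentiating this representation and invoking the first‑block derivative bounds gives $\ab{\w W_1'(t)-\w V_2'(t)}\le C(d+1)\gamma(1+\beta/\gamma)$.

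\emph{Main obstacle.} The bulk of the argument is mechanical, but the one genuinely delicate ingredient is the second block: without the uniform lower bound $\ab{\w\varLambda_1(t)-\deit}\ge C(\gamma+\ab{\deit-1})$ (and its $\w\varDelta_1$‑analogue), every differentiation of the $\w W$‑ and $\w V$‑fractions produces a spurious $d^2/\gamma^2$ term, because $(\deit)'$ is only $O(d)$ and lands over the square of the small denominator. The case split on $u=1-\cos(dt)$, together with the a priori bounds $c_1\gamma\le\ab{\w\varLambda_1(t)-1}\le C\gamma$ and $Re\,\w\varLambda_1(t)\le1-c\gamma$, is the crux; once it is available, and once one exploits the factorization $N_1=(\deit-1)(\cdots)+(\eit-1)\gamma\w\varLambda_1$ (which forces the numerators to vanish both at $t=0$ and at $t=2\pi k/d$), all the listed bounds follow.
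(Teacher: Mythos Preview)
Your three-block organisation is exactly what the paper's one-sentence proof (``based on the previously obtained estimates of $\ab{\w\varLambda_1},\ab{\w\varLambda_2},\ab{\w W_2},\ab{\w G_1}$ and the expansion of $\sqrt{\w D(t)}$'') is asking the reader to carry out, and the way you recycle Lemmas~\ref{lambda_2}--\ref{w1_a}, Corollary~\ref{sqrtD_long_corr_2} and inequality~(\ref{l-g_a}) matches the paper's intended route. The factorisation $N_1=(\deit-1)[\eit(\beta-\gamma(1-\alpha))-\w\varLambda_1]+(\eit-1)\gamma\w\varLambda_1$ and its use in bounding $\w W_1$, $\w W_1'$ is correct and is the efficient way to avoid spurious powers of $d$.

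There is, however, a genuine slip in your block~2 case split. You split on $u=1-\cos(dt)\le c\gamma/2$, but $\ab{\deit-1}=\sqrt{2u}$, so in case~1 you only have $\ab{\deit-1}\le\sqrt{c\gamma}$, and the conclusion $c\gamma/2\ge C(\gamma+\ab{\deit-1})$ would require $\gamma\gtrsim\sqrt{\gamma}$, which fails when $\gamma$ is small. The fix is to split on $\ab{\deit-1}$ versus $\gamma$ (equivalently $u$ versus $\gamma^2$): if $\ab{\deit-1}\le M\gamma$ use the reverse triangle inequality $\ab{\w\varLambda_1-\deit}\ge 1-\ab{\w\varLambda_1}\ge c\gamma$ directly (this is all that (\ref{l-deit}) and Lemma~\ref{At} actually give); if $\ab{\deit-1}>M\gamma$ with $M$ twice the constant in $\ab{\w\varLambda_1-1}\le C\gamma$, use $\ab{\w\varLambda_1-\deit}\ge\ab{\deit-1}-\ab{\w\varLambda_1-1}\ge\tfrac12\ab{\deit-1}$. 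With this corrected threshold your claim $\ab{\w\varLambda_1-\deit}\ge C(\gamma+\ab{\deit-1})$ (and its $\w\varDelta_1$ analogue) goes through, and the rest of blocks~2 and~3 stands as written. A minor aside: your identity for $\ln\w G_1$ in terms of $\ln\w\varDelta_1$ has an extra $\tfrac12\w A_1^2\gamma^2$; in fact $\ln\w G_1-\ln\w\varDelta_1=C\theta\gamma^3$, but this does not affect the conclusion $\ab{\w G_1}\le\ee^{-C\gamma}$.
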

\begin{proof}
All inequalities are based on the previously obtained estimates of
$\ab{\w{\varLambda}_1(t)}$, $\ab{\w{\varLambda}_2(t)}$, $\ab{\w W_2(t)}$, $\ab
{\w
G_1(t)}$, and the expansion of $\sqrt{\w{D}(t)}$. The inequalities containing
$\w{V}_2(t)$ are proved similarly to those of $\w{V}_1(t)$ (see Lemma
\ref{w1_a}).
\end{proof}

%%%%%%%%%%%%%%%%%%%%%%%%%%%%%%%%%%%%%%%%%%%%%
%%%%%%%%%%%%%%%%%%%%%%%%%%%%%%%%%%%
%{\chr"C4}{\chr"AE}RODYMAI
\section{Proofs}

%% 1 %%

\begin{proof}[Proof of Theorem \ref{T1}]
Applying inversion formula\index{inversion formula} (\ref{Tsaregradskii}), Lemma \ref{w1-v1}, and
Lemma \ref{G} we prove
\begin{align*}
\hspace{2em}&\hspace{-2em}\knorm{F_n-(G^nV+E)}
\\
&\leqslant\frac{1}{2\pi}\int_{-\pi}^{\pi}
\frac{\ab{\w{F}_n(t)-\w
{G}^n(t)\w{V}(t)-\w{E}(t)}}{\ab{\eit-1}}\DD t
\\
&\leqslant\frac{1}{2\pi}\int_{-\pi}^{\pi}
\frac{\ab{\w{\varLambda
}_1^n(t)}\ab{\w{W}_1(t)-\w{V}(t)}}{\ab{\eit-1}}\DD t+\frac{1}{2\pi}\int_{-\pi}^{\pi}
\frac{\ab{\w{V}(t)}\ab{\w{\varLambda}_1^n(t)-\w{G}^n(t)}}{\ab
{\eit-1}}\DD t
\\
&\quad+\frac{1}{2\pi}\int_{-\pi}^{\pi}
\frac{\ab{\w{\varLambda}_2^n(t)\w
{W}_2(t)}}{\ab{\eit-1}}\DD t
\\
&\leqslant C(d+1)\sqrt{\frac{\gamma}{n}}\ee^{-Cn\gamma\alpha
}+C(d+1) (\beta+4
\gamma)^n.
\end{align*}

The local estimate is obtained analogously by applying inversion formula\index{inversion formula}
(\ref{LAP}).
\end{proof}

%% 2 %%

\begin{proof}[Proof of Theorem \ref{T2}]
The proof is similar to the proof of Theorem \ref{T1}.
Lemma \ref{w1-v1_a} and Lemma \ref{G_a} are applied instead of Lemma
\ref{w1-v1} and Lemma \ref{G}, since $\alpha\geqslant C_2$.
\end{proof}

%% 3 %%

\begin{proof}[\bf{Proof of Theorem \ref{T3}}]
Taking into account Corollary \ref{lambda_2_corr} and Lemma \ref{nelygybes},
we get
\begin{eqnarray*}
\ab{\w{\varLambda}_{1,2}^n \w{W}_{1,2}}
\!\!\!\!&\leqslant&\!\!\!\!C(d+1)\ee^{-Cn},
\\
\ab{(\w{\varLambda}_{1,2}^n \w{W}_{1,2}
)'}\!\!\!\!&\leqslant&\!\!\!\!\ab{(\w{\varLambda }_{1,2}^n
)'}\ab{\w{W}_{1,2}}+\ab{\w{\varLambda}_{1,2}^n}
\ab{\w{W}_{1,2}'}
\\
\!\!\!\!&\leqslant&\!\!\!\! nC(d+1)\ee^{-C(n-1)}+C(d+1)\ee^{-Cn}
\\
\!\!\!\!&\leqslant&\!\!\!\! C(d+1)n\ee^{-Cn}.
\end{eqnarray*}

From inversion formula\index{inversion formula} (\ref{TVAP}) applied with $a=0$ and $b=1$ we get
\begin{align*}
\norm{F_n-E}&= \norm{\varLambda_1^n
W_1 + \varLambda_2^n W_2}
\leqslant\norm {\varLambda_1^n W_1} + \norm{
\varLambda_2^n W_2}
\\
&\leqslant (1+\pi)^{1/2} \biggl(\frac{1}{2\pi}\int\limits
_{-\pi}^{\pi
}
\ab{\w{\varLambda}_1^n \w{W}_1}^2
+\ab{(\w{\varLambda}_1^n \w{W}_1
)'}^2 \hbox{\rm d}t \biggr)^{1/2}
\\
&\quad {}+ (1+\pi)^{1/2} \biggl(\frac{1}{2\pi}\int\limits
_{-\pi}^{\pi}
\ab{\w {\varLambda}_2^n \w{W}_2}^2+
\ab{(\w{\varLambda}_2^n \w{W}_2
)'}^2\hbox{\rm d}t \biggr)^{1/2}
\\
&\leqslant C(d+1)\ee^{-Cn}.\qedhere
\end{align*}
\end{proof}

%% 4 %%

\begin{proof}[Proof of Theorem \ref{T4}]
\begin{eqnarray*}
\norm{F_n-(G_1^nV_2+E
)}\!\!\!\!&\leqslant&\!\!\!\!\norm{(\varLambda_1^n-G_1^n
)W_1}+\norm {G_1^n(W_1-V_2)}+
\norm{\varLambda_2^nW_2}.
\end{eqnarray*}

From Lemma \ref{nelygybes}, we get
\begingroup
\allowdisplaybreaks
\begin{align*}
\ab{\w{\varLambda}_2^n(t)\w{W}_2(t)}&
\leqslant C(d+1) (\beta+4\gamma)^n,
\\
\ab{(\w{\varLambda}_2^n(t)\w{W}_2(t)
)'}&\leqslant\ab{(\w{\varLambda }_2^n(t)
)'\w{W}_2(t)}+\ab{\w{\varLambda}_2^n(t)
\w{W}_2'(t)}
\\
&\leqslant C(d+1)n(\beta+4\gamma)^n+C(d+1) (\beta+4
\gamma)^n
\\
&\leqslant C(d+1)n(\beta+4\gamma)^n,
\\
\ab{\w{G}_1^n(t) (\w{W}_1(t)-
\w{V}_2(t))}&\leqslant C(d+1)\gamma\ee ^{-Cn\gamma},
\\
\ab{(\w{G}_1^n(t) (\w{W}_1(t)-
\w{V}_2(t)))'}&\leqslant\ab{(\w
{G}_1^n(t))'(
\w{W}_1(t)-\w{V}_2(t))}+\ab{\w{G}_1^n(t)
(\w{W}_1(t)-\w {V}_2(t))'}
\\*
&\leqslant C(d+1)n\gamma^2\ee^{-C(n-1)\gamma}+C(d+1)\gamma
\ee^{-Cn\gamma
}(1+\beta/\gamma)
\\
&\leqslant C(d+1)\gamma\ee^{-Cn\gamma}(1+\beta/\gamma),
\\
\ab{(\w{\varLambda}_1^n(t)-\w{G}_1^n(t)
)\w{W}_1(t)}&\leqslant n\ab{\w {\varLambda}_1(t)-
\w{G}_1(t)}\ee^{-C(n-1)\gamma}\frac{C(d+1)}{\gamma}
\\
&\leqslant C(d+1)\gamma\ee^{-Cn\gamma},
\\
\ab{((\w{\varLambda}_1^n(t)-
\w{G}_1^n(t))\w{W}_1(t)
)'}&\leqslant\ab{(\w {\varLambda}_1^n(t)-
\w{G}_1^n(t))'\w{W}_1(t)}+
\ab{(\w{\varLambda}_1^n(t)-\w {G}_1^n(t)
)\w{W}_1'(t)}\\
&\leqslant C(d+1)\gamma\ee^{-Cn\gamma}(1+\beta/\gamma).
\end{align*}
\endgroup

By applying inversion formula\index{inversion formula} (\ref{TVAP}) with $a=0$ and $b=1$, we prove
\begin{align*}
\norm{F_n-(G_1^nV_2+E
)}&\leqslant C(d+1) \bigl(\gamma\ee^{-Cn\gamma}(1+\beta /\gamma)+n(\beta+4
\gamma)^n\bigr).\qedhere
\end{align*}
\end{proof}

%% 5 %%

\begin{proof}[Proof of Theorem \ref{T5}]

We use the inequalities obtained in the proof of Theorem \ref{T4} and
inversion formula\index{inversion formula} (\ref{non-uniform_1}) with $a=0$. We have
\begin{align*}
\hspace{2em}&\hspace{-2em} k\ab{F_n-(G_1^nV_2+E
)\{k\}}
\\
&\leqslant\frac{1}{2\pi}\int_{-\pi}^{\pi}\ab{
(\w{W}_1(t) (\w{\varLambda }_1^n(t)-
\w{G}_1^n(t)))'}\DD t
\\
&\quad+\frac{1}{2\pi}\int_{-\pi}^{\pi}\ab{(
\w{G}_1^n(t) (\w{W}_1(t)-\w
{V}_2(t)))'}\DD t +\frac{1}{2\pi}\int
_{-\pi}^{\pi}\ab{(\w{\varLambda }_2(t)
\w{W}_2(t))'}\DD t
\\
&\leqslant C(d+1) \bigl(\gamma\ee^{-0.5Cn\gamma}\ee^{-0.5Cn\gamma}(1+\beta /
\gamma)+n\ee^{n\ln(\beta+4\gamma)}\bigr).
\end{align*}

Hence,
\[
k(1+\beta/\gamma)^{-1}\ab{F_n-(G_1^nV_2+E
)\{k\}}\leqslant\frac{C(d+1)\ee
^{-Cn\gamma}}{n}
\]
and
\[
\ab{F_n-(G_1^nV_2+E)
\{k\}}\leqslant\frac{C(d+1)\ee^{-Cn\gamma}}{n},
\]
since $\ab{M}\leqslant\localnorm{M}\leqslant\norm{M}$.

Summing those inequalities, we get
\[
\ab{F_n-(G_1^nV_2+E)
\{k\}}\leqslant\frac{C(d+1)\ee^{-Cn\gamma
}}{n(1+k(1+\beta/\gamma)^{-1})}=\frac{C(d+1)\ee^{-Cn\gamma}(\beta+\gamma
)}{n(\beta+(k+1)\gamma)}.
\]

In order to prove the second inequality of the theorem, we apply the inversion
formula\index{inversion formula} (\ref{non-uniform_2}) with $a=0$:
\begin{align*}
\hspace{2em}&\hspace{-2em} k\ab{F_n-(G_1^nV_2+E) (k)}\\
&\leqslant\frac{1}{2\pi}\int_{-\pi}^{\pi}\AB{
\biggl(\frac{\w
{W}_1(t)}{\ee^{-\ii t}-1}\bigl(\w{\varLambda}_1^n(t)-
\w{G}_1^n(t)\bigr) \biggr)'}\DD t
\\
&\quad+\frac{1}{2\pi}\int_{-\pi}^{\pi}\AB{ \biggl(
\w{G}_1^n(t) \biggl(\frac
{\w{W}_1(t)}{\ee^{-\ii t}-1}-
\frac{\w{V}_2(t)}{\ee^{-\ii t}-1} \biggr) \biggr)'}\DD t
\\
&\quad+\frac{1}{2\pi}\int_{-\pi}^{\pi}\AB{ \biggl(
\w{\varLambda}_2(t)\frac{\w
{W}_2(t)}{\ee^{-\ii t}-1} \biggr)'}\DD t.
\end{align*}

The summands can be estimated by using the inequalities from the proof of
Theorem \ref{T4}:
\begin{gather*}
\AB{\frac{\w{W}_1(t)}{\ee^{-\ii t}-1}}\ab{(\w{\varLambda}_1^n(t)-
\w {G}_1^n(t))'}\leqslant C(d+1)
\gamma^2\ee^{-Cn\gamma},\\
\frac{\ee^{(d+1)\ii t}-1}{\ee^{-\ii t}-1}=\frac{(\eit-1)(1+\eit+\cdots
+\ee^{d\ii t})}{\ee^{-it}(1-\eit)} =-\eit(1+\eit+\cdots+\deit),
\end{gather*}
\begin{eqnarray*}
\AB{ \biggl(\frac{\w{W}_1(t)}{\ee^{-\ii t}-1} \biggr)'}\!\!\!\!&\leqslant&\!\!\!\!
\frac
{Cd^2}{\gamma^2}, \qquad\AB{ \biggl(\frac{\w{W}_2(t)}{\ee^{-\ii t}-1} \biggr)'}
\leqslant Cd^2,
\\[2pt]
\AB{ \biggl(\frac{\w{W}_1(t)}{\ee^{-\ii t}-1} \biggr)'}\ab{\w{\varLambda
}_1^n(t)-\w{G}_1^n(t)}
\!\!\!\!&\leqslant&\!\!\!\! Cn\gamma^3\ee^{-Cn\gamma}\frac
{d^2}{\gamma^2}\leqslant
Cd^2\ee^{-Cn\gamma},
\\[2pt]
\AB{\w{G}_1^n(t)' \biggl(
\frac{\w{W}_1(t)-\w{V}_2(t)}{\ee^{-\ii t}-1} \biggr)}\!\!\!\!&\leqslant&\!\!\!\! C(d+1)\gamma\ee^{-Cn\gamma},
\\[2pt]
\AB{\w{G}_1^n(t) \biggl(\frac{\w{W}_1(t)-\w{V}_2(t)}{\ee^{-\ii t}-1}
\biggr)'}\!\!\!\!&\leqslant&\!\!\!\! \frac{Cd^2\ee^{-Cn\gamma}}{\gamma^2},
\\[2pt]
\AB{\w{\varLambda}_2^n(t)'
\frac{\w{W}_2(t)}{\ee^{-\ii t}-1}}\!\!\!\!&\leqslant&\!\!\!\!C(d+1)\ee^{-Cn},
\\[2pt]
\AB{\w{\varLambda}_2^n(t) \biggl(\frac{\w{W}_2(t)}{\ee^{-\ii t}-1}
\biggr)'}\!\!\!\!&\leqslant&\!\!\!\! Cd^2(\beta+4\gamma)^n.
\end{eqnarray*}

Thus, we get
\[
k\gamma^2\ab{F_n-(G_1^nV_2+E
) (k)}\leqslant\frac{Cd^2\ee^{-Cn\gamma}}{n}
\]
and
\[
\ab{F_n-(G_1^nV_2+E)
(k)}\leqslant\frac{C(d+1)\ee^{-Cn\gamma}}{n}.
\]

By summing the above inequalities we arrive at
\[
\ab{F_n-(G_1^nV_2+E)
(k)}\leqslant\frac{Cd^2\ee^{-Cn\gamma}}{n(1+k\gamma^2)}.\qedhere
\]
\end{proof}

%% Bibliography %%
%%%%%%%%%%%%%%%%%%%%%%
%\bibliography{bib/biblio}
%

%
%\end{document}

%\begin{appendix}
%\end{appendix}

%\begin{acknowledgement}%[title={Acknowledgments}]
%\end{acknowledgement}

%\begin{funding}
%\gsponsor[id=,sponsor-id=]{}
%\gnumber[refid=]{}
%\end{funding}
% structpyb loaded by sandra.matulyte, 2018-11-07 12:43:37

\end{document}